\renewcommand{\phi}{\varphi}
\newcommand{\Aut}{Aut_f(F_\infty)}
\newcommand{\NN}{\mathbb{N}}
\newcommand{\ZZ}{\mathbb{Z}}
\renewcommand{\L}{\mathcal{L}}
\renewcommand{\S}{\mathcal{S}}
\newcommand{\G}{\mathcal G}
\newcommand{\ModS}{{\rm Mod}_{\mathcal S}}
\newcommand{\ModG}{{\rm Mod}_{\mathcal G}}
\newcommand{\Con}{{\rm Con}}
\newcommand{\e}{\varepsilon}
\renewcommand{\d}{{\rm d}}
\renewcommand{\ll }{\langle\hspace{-.7mm}\langle }
\newcommand{\rr }{\rangle\hspace{-.7mm}\rangle }
\newcommand{\Gxh}{\Gamma (G,\mathcal A)}
\newcommand{\h}{\hookrightarrow_h}
\newcommand{\Hk}{\{ H_1, \ldots, H_k\}}
\newtheorem{thm}{Theorem}[section]
\newtheorem*{thm*}{Theorem}
\newtheorem{cor}[thm]{Corollary}
\newtheorem{lem}[thm]{Lemma}
\newtheorem{prop}[thm]{Proposition}
\newtheorem{prob}[thm]{Question}
\theoremstyle{definition}
\newtheorem{defn}[thm]{Definition}
\newtheorem{conv}[thm]{Convention}
\newtheorem{ex}[thm]{Example}
\theoremstyle{remark}
\newtheorem{rem}[thm]{Remark}
\newcommand\blfootnote[1]{%
  \begingroup
  \renewcommand\thefootnote{}\footnote{#1}%
  \addtocounter{footnote}{-1}%
  \endgroup
}
\newfont{\eufm}{eufm10}
\begin{document}
\title{\vspace*{-17mm} A topological zero-one law and elementary equivalence of
finitely generated groups}

\author{D. Osin\thanks{This work has been supported by the NSF grant DMS-1612473.}}
\date{}

\maketitle
\vspace*{-6mm}

\begin{abstract} \blfootnote{\textbf{MSC} Primary: 03C07, 03C60, 20F67, 03E15. Secondary: 03C7, 20F05, 54H05}
Let $\G$ denote the space of finitely generated marked groups. We give equivalent characterizations of closed subspaces $\S\subseteq \G$ satisfying the following zero-one law: for any sentence $\sigma$ in the infinitary logic $\mathcal L_{\omega_1, \omega}$, the set of all models of $\sigma$ in $\S$ is either meager or comeager. In particular, we prove that the zero-one law holds for certain natural spaces associated to hyperbolic groups and their generalizations. As an application, we show that generic torsion-free lacunary hyperbolic groups are elementarily equivalent; the same claim holds for lacunary hyperbolic groups without non-trivial finite normal subgroups. Our paper has a substantial expository component. We give streamlined proofs of some known results and survey ideas from topology, logic, and geometric group theory relevant to our work. We also discuss some open problems.
\end{abstract}

\tableofcontents

\section{Introduction}

The well-known zero-one law in model theory asserts that a first-order sentence is either almost surely true or almost surely false for a randomly chosen finite relational structure. More precisely, let $\L$ be a first-order relational language with finitely many predicates. Given $n\in \mathbb N$ and an $\L$-sentence $\sigma$, let $\mu_n(\sigma)$ denote the fraction of $\L$-structures of cardinality $n$ satisfying $\sigma$.

\begin{thm} \label{01FM}
For any $\L$-sentence $\sigma$, we have $\lim\limits_{n\to \infty}\mu_n(\sigma) \in \{ 0,1\}$.
\end{thm}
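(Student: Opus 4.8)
The plan is to identify the \emph{almost sure theory} $T$, consisting of all first-order $\L$-sentences that hold with asymptotic probability $1$ under the measures $\mu_n$, and to prove that $T$ is a \emph{complete} consistent theory; the zero-one law then follows by a short soft argument. The engine driving completeness is a family of \emph{extension axioms}. For each $k\in\NN$ and each consistent quantifier-free description $\delta$ of all atomic facts among elements $x_1,\dots,x_k,y$ that involve $y$, let $\tau_\delta$ be the sentence asserting: whenever $x_1,\dots,x_k$ are distinct, there exists $y$, distinct from each $x_i$, so that $x_1,\dots,x_k,y$ realize $\delta$. Here I only constrain atomic facts involving $y$ together with the $x_i$, leaving unconstrained any tuple that involves two of the new witnesses in different instances.

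First I would show that every extension axiom is almost surely true, i.e. $\mu_n(\tau_\delta)\to 1$. The measure $\mu_n$ is the uniform measure on $\L$-structures with universe $\{1,\dots,n\}$, under which each atomic fact is decided by an independent fair coin. Fixing distinct $x_1,\dots,x_k$, a single candidate $y$ realizes the prescribed pattern with a fixed probability $p=p(\delta)>0$ that does not depend on $n$; moreover the events attached to distinct candidates depend on disjoint sets of coin flips, hence are independent. Thus the probability that none of the $n-k$ candidates works is $(1-p)^{\,n-k}$, and a union bound over the at most $n^k$ choices of the tuple $(x_1,\dots,x_k)$ yields
\[
\mu_n(\neg\tau_\delta)\ \le\ n^k\,(1-p)^{\,n-k}\ \xrightarrow[n\to\infty]{}\ 0 .
\]

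Next I would prove that the set $T$ of all extension axioms axiomatizes a complete theory. Since each $\tau_\delta$ produces a fresh element from any finite set, $T$ has no finite models. And $T$ is $\aleph_0$-categorical: given two countable models of $T$, any finite partial isomorphism between them extends by one point, because the target model satisfies the relevant extension axiom that realizes the required quantifier-free type, so a standard back-and-forth never gets stuck. By Vaught's test, a consistent theory with no finite models that is categorical in some infinite power is complete; hence for every $\L$-sentence $\sigma$ either $T\vdash\sigma$ or $T\vdash\neg\sigma$.

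Finally I would combine these ingredients. Given $\sigma$, suppose $T\vdash\sigma$ (the other case is symmetric). By compactness finitely many extension axioms $\tau_1,\dots,\tau_r$ already entail $\sigma$, so
\[
\mu_n(\sigma)\ \ge\ \mu_n(\tau_1\wedge\cdots\wedge\tau_r)\ \ge\ 1-\sum_{i=1}^{r}\mu_n(\neg\tau_i)\ \xrightarrow[n\to\infty]{}\ 1 ,
\]
and symmetrically $\mu_n(\sigma)\to 0$ when $T\vdash\neg\sigma$; either way the limit lies in $\{0,1\}$. I expect the main obstacle to be the completeness step rather than the analysis: one must formulate the extension axioms so that \emph{every} quantifier-free type over a finite set is realizable, guaranteeing that the back-and-forth construction proceeds, and verify carefully that $T$ admits no finite models so that Vaught's test applies. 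The probabilistic estimate and the compactness wrap-up are then routine.
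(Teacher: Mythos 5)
The paper does not actually prove Theorem \ref{01FM}: it is quoted as a classical result with references to Glebskii--Kogan--Liogonkii--Talanov and to Fagin, so there is no in-paper argument to compare against. Your proposal is a correct reconstruction of Fagin's original proof: extension axioms, the estimate $\mu_n(\neg\tau_\delta)\le n^k(1-p)^{n-k}$ (valid because the atomic facts attached to distinct candidates $y$ are disjoint sets of independent coin flips, which uses that $\L$ is relational), $\aleph_0$-categoricity of the almost sure theory via back-and-forth, Vaught's test, and compactness to pass from $T\vdash\sigma$ to a finite conjunction of axioms. Two small points you should make explicit. First, Vaught's test requires $T$ to be consistent; this is not automatic from the axioms alone, but it follows from your own estimate (every finite subset of $T$ has probability tending to $1$, hence has a finite model for large $n$, and compactness does the rest), or alternatively from exhibiting the countable generic structure directly. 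Second, your argument computes the limiting probability for labelled structures on the universe $\{1,\dots,n\}$; the theorem as stated speaks of the ``fraction of $\L$-structures of cardinality $n$'', which under the standard (labelled) reading is exactly what you proved, but if one counts isomorphism types instead, an additional standard step (almost sure rigidity, or the equality of labelled and unlabelled asymptotic densities) is needed. Neither point affects the soundness of the approach.
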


This theorem was first proved by Glebskii, Kogan, Liogonkii, and Talanov \cite{GKLT} in 1969 and rediscovered by Fagin \cite{Fag} in 1975. Since then, a plethora of generalizations and alternative versions of this result for infinitary logics and different probability models have been obtained (see \cite{GGM,KV92,KV00} and references therein).

It is natural to ask whether an analogue of Theorem \ref{01FM} holds for some infinite structures. In these settings, the property of having full measure can be replaced with genericity (in the Baire category sense) in an appropriate topological space. Our main goal is to study this question for finitely generated groups. We give several equivalent characterizations of classes of finitely generated groups for which a topological analogue of Theorem \ref{01FM} holds and discuss some applications, mostly to groups with hyperbolic properties.

This paper brings together ideas from topology, logic, and geometric group theory. To promote the interaction between these areas and to make our paper accessible for a wider audience, we include detailed proofs of some known results and give a brief survey of relevant background material. Many of our results can also be proved for finitely generated algebras of any countable signature. However, dealing with general notation makes the proofs more mysterious than they really are and we do not see any interesting applications outside of group theory at the moment. For this reason, we focus on groups throughout the main body of the paper and discuss possible generalizations in the last section.

\paragraph{Acknowledgments.} The author is grateful to Simon Thomas for fruitful discussions of this work. The author would also like to thank Arnaud Golfouse, M\'arton Elekes, and Yves Stalder, who pointed out mistakes and inaccuracies in the earlier versions of the paper.

\section{Main results}

We begin by briefly recalling the definition of the space of finitely generated marked groups introduced by Grigorchuk in \cite{Gri}.
Let $\G_n$ denote the set of all pairs $(G,A)$, where $G$ is a group and $A=(a_1, \ldots, a_n)$ is an ordered generating set of $G$, considered up to the following equivalence relation: two pairs $(G, (a_1, \ldots, a_n))$ and $(H, (b_1, \ldots , b_n))$ are identified if the map $a_1\mapsto b_1,\; \ldots,\; a_n\mapsto b_n$ extends to an isomorphism $G\to H$. For simplicity, we keep the notation $(G,A)$ for the equivalence class of $(G,A)$. A sequence $\{ (G_i, A_i)\}_{i\in \NN}$ converges to $(G,A)$ in $\G_n$ if the Cayley graphs $\Gamma (G_i,A_i)$ locally converge to $\Gamma (G,A)$ in the natural sense (see Section 3.3 for more details).

Identifying $(G, (a_1, \ldots , a_n))$ with $(G, (a_1, \ldots , a_n,1))$, we obtain an embedding $\G_n\subseteq \G_{n+1}$. The topological union $\G=\bigcup_{n\in \NN}\G_n$ is a totally disconnected Polish space called the \emph{space of finitely generated marked groups}.

For a finitely generated group $G$, we denote by $[G]\subseteq \G $ its \emph{isomorphism class}; that is,
$$
[G]=\{ (H,B)\in \G\mid H\cong G \}.
$$
A subset $\S\subseteq \G$ is said to be \emph{isomorphism-invariant} if $[G]\subseteq \S$ whenever $(G,A)\in \S$ for some finite generating set $A$.

Henceforth, let $\L$ denote the \emph{language of groups}. That is, $\L$ is the first-order language with the signature $\{ 1, \cdot, ^{-1}\}$. Recall that $\L_{\omega_1, \omega}$ is the infinitary version of $\L$,  where countable conjunctions and disjunctions (but only finite sequences of quantifiers) are allowed. It is useful to keep in mind that every countable theory in $\L_{\omega_1, \omega}$ is equivalent to a single sentence; in particular, every theory in the first-order logic is equivalent to an $\L_{\omega_1, \omega}$-sentence.

The expressive power of $\L_{\omega_1, \omega}$ is much greater than that of the first-order logic. In fact, most algebraic, geometric, and even analytic properties of groups (e.g., finiteness, solvability, hyperbolicity, amenability, property (T) of Kazhdan,  the properties of having exponential, subsexponential, and polynomial growth, etc.) can be expressed by $\L_{\omega_1, \omega}$-sentences.

\begin{defn}
We say that a subspace $\S\subseteq \G$  satisfies the \emph{zero-one law for $\mathcal L_{\omega_1, \omega}$-sentences} if for any $\mathcal L_{\omega_1, \omega}$-sentence $\sigma$, the \emph{set of models} $${\rm Mod}_{\S}(\sigma)= \{ (G,A)\in \S\mid G\models \sigma\}$$
is either meager or comeager in $\S$; in the latter case, we say that $\sigma$ \emph{holds generically} in $\S$.
\end{defn}

Informally, the zero-one law for $\L_{\omega_1, \omega}$-sentences means that generic groups from $\S$ are virtually indistinguishable.
Our first main result is the following.

\begin{thm}[Zero-one law for finitely generated groups]\label{01}
For any isomorphism-invariant closed subspace $\S\subseteq \G$, the following conditions are equivalent.
\begin{enumerate}
\item[(a)] For any non-empty open sets $U$, $V$ in $\S$, there is a finitely generated group $G$ such that $[G]\cap U\ne \emptyset$ and $[G]\cap V\ne \emptyset$.
\item[(b)] There exists a finitely generated group $G$ such that ${[G]}$ is dense in $\S$.
\item[(c)] $\S $ satisfies the zero-one law for $\mathcal L_{\omega_1, \omega}$-sentences.
\end{enumerate}
\end{thm}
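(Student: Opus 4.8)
The plan is to translate the three conditions into topological dynamics for the natural action of the countable group $\Gamma:=\Aut$ on $\G$ and then run a Baire-category argument. Recall that $\Gamma$ acts on $\G$ by homeomorphisms: a marking of $G$ is a finite-support epimorphism $F_\infty\twoheadrightarrow G$, and $\gamma\in\Gamma$ acts by precomposition, rewriting each generator as a fixed word in the others. Two marked groups lie in the same $\Gamma$-orbit precisely when their underlying groups are isomorphic, so the orbits are exactly the isomorphism classes $[G]$. As $\S$ is closed in the Polish space $\G$, it is Polish and hence a Baire space, and since $\S$ is isomorphism-invariant the action restricts to $\S$ with the orbit of $(G,A)\in\S$ equal to $[G]$. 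Under this dictionary, (a) says exactly that the action on $\S$ is \emph{topologically transitive} (for all non-empty open $U,V$ there is $\gamma\in\Gamma$ with $\gamma U\cap V\ne\emptyset$) and (b) says it has a \emph{dense orbit}. I would prove the cycle (a)$\Leftrightarrow$(b), (b)$\Rightarrow$(c), (c)$\Rightarrow$(a).

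The equivalence (a)$\Leftrightarrow$(b) is the standard topological-transitivity argument. The direction (b)$\Rightarrow$(a) is immediate since a dense orbit meets every non-empty open set. For (a)$\Rightarrow$(b), fix a countable basis $\{U_i\}$ of $\S$ and set $W_i:=\bigcup_{\gamma\in\Gamma}\gamma U_i$; each $W_i$ is open because every $\gamma$ is a homeomorphism, and topological transitivity makes each $W_i$ dense, so by the Baire category theorem $\bigcap_i W_i$ is a dense $G_\delta$. Any point $x$ in this intersection satisfies, for every $i$, that some $\gamma^{-1}x\in U_i$, i.e.\ its orbit $[x]$ meets every basic open set and is therefore dense, giving (b).

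For (b)$\Rightarrow$(c), first note that $\ModS(\sigma)$ is an isomorphism-invariant Borel set: by induction on $\sigma$, atomic conditions ``$w=1$'' are clopen, the countable connectives give countable Boolean operations, and an existential quantifier over a finitely generated group unfolds into a countable disjunction over words in the generators. Thus $\ModS(\sigma)$ has the Baire property. If it is meager we are done; otherwise it is comeager in some non-empty open $U$, so $U\setminus\ModS(\sigma)$ is meager, whence $\gamma U\setminus\ModS(\sigma)$ is meager for every $\gamma$ by invariance and the homeomorphism property. Then $W:=\bigcup_\gamma\gamma U$ is open and invariant, $W\setminus\ModS(\sigma)$ is a \emph{countable} union of meager sets and hence meager, and the dense orbit from (b) meets $W$ and so lies inside it, forcing $W$ to be dense; therefore $\S\setminus\ModS(\sigma)=(\S\setminus W)\cup(W\setminus\ModS(\sigma))$ is meager and $\ModS(\sigma)$ is comeager. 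For (c)$\Rightarrow$(a) I argue contrapositively: if (a) fails, fix non-empty open $U,V$ no isomorphism class of which meets both, so that the open invariant saturation $W:=\bigcup_\gamma\gamma U$ contains $U$ yet is disjoint from $V$; then $W$ is non-meager (it is non-empty and open) but not comeager (it is not dense, missing $V$). Finally $W=\ModS(\sigma)$ for the sentence $\sigma$ asserting the existence of a generating tuple landing in $U$ --- ``generating'' being the countable disjunction ``every element is a word in the tuple'' and ``landing in $U$'' a countable disjunction of the clopen conditions defining $U$ --- so $\sigma$ violates the zero-one law.

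The conceptual heart of the proof, and the step I expect to be the main obstacle, is the reduction in the first paragraph: that abstract isomorphism of finitely generated groups is realized as the orbit equivalence relation of a \emph{countable} group acting by \emph{homeomorphisms}. This single fact does all the real work --- it makes saturations of open sets open (used in (a)$\Rightarrow$(b) and (c)$\Rightarrow$(a)) and keeps unions of meager translates meager (used in (b)$\Rightarrow$(c)) --- whereas a naive ``change of marking'' description fails because the demand that a new tuple still generate the group is not an open condition. The remaining care goes into the logic--topology dictionary, namely the Borel-definability of $\ModS(\sigma)$ and the converse expressibility of $W$ by an $\L_{\omega_1,\omega}$-sentence; both follow from the elementary correspondence between word conditions and basic clopen sets.
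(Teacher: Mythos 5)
Your proof is correct and follows essentially the same route as the paper: both reduce the statement to the topological dynamics of the $\Aut$-action on $\S$ (whose orbits are exactly the isomorphism classes), combine topological transitivity and Baire category with the Borel-definability of $\ModS(\sigma)$, and, for the converse, realize the invariant open saturation of $U$ as the set of models of an $\L_{\omega_1,\omega}$-sentence. The only cosmetic difference is that you inline the Baire-property localization argument using the dense orbit from (b), whereas the paper invokes the general topological zero-one law for topologically transitive actions, which it proves by essentially the same argument.
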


In the settings of Theorem \ref{01}, one might expect that an $\L_{\omega_1,\omega}$-sentence holds generically in $\S$ if and only if it holds for a group whose isomorphism class is dense in $\S$. In general, this is false (see Example \ref{Sc}). The question of which $\L_{\omega_1,\omega}$-sentences actually have comeager sets of models in $\S$ is rather non-trivial and is discussed in more detail in Section 5.3.

We mention one applications of the zero-one law for $\mathcal L_{\omega_1, \omega}$-sentences to elementary theories, which will be used later. Recall that two groups are \emph{elementarily equivalent} if they satisfy the same first-order sentences. For any $\S\subseteq \G$, we define $Th^{gen}(\S)$ to be the set of all first-order sentences in $\L$ that hold generically in $\S$. Since $Th^{gen}(\S)$ is countable, it has a comeager set of models in $\S$. Note that for every sentence $\sigma$, we have
$$
\ModS(\sigma)\cup \ModS(\lnot \sigma)=\S.
$$
If $\S$ is a Baire space (for instance, a closed subspace of $\G$) satisfying the zero-one law for $\L_{\omega_1, \omega}$-sentences, then either $\sigma$ or $\lnot \sigma$ must hold generically in $\S$ since $\S$ cannot be covered by two meager sets. It follows that $Th^{gen}(\S)$ is complete. Since all models of a complete theory are elementarily equivalent, we obtain the following.

\begin{prop}\label{Th-gen}
A closed subspace $\S\subseteq \G$ satisfying the zero-one law for $\mathcal L_{\omega_1, \omega}$-sentences contains a comeager elementary equivalence class.
\end{prop}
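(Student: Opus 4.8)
The plan is to exhibit a single complete first-order theory whose set of models in $\S$ is comeager, and then observe that any two such models are elementarily equivalent, so that this set is contained in a single elementary equivalence class.

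First I would consider the theory $Th^{gen}(\S)$ consisting of all first-order $\L$-sentences that hold generically in $\S$. Because there are only countably many first-order $\L$-sentences, $Th^{gen}(\S)$ is countable, and by definition each of its members has comeager set of models in $\S$. Since a countable intersection of comeager sets in a Baire space is again comeager, the set $M$ of points of $\S$ satisfying every sentence of $Th^{gen}(\S)$ is comeager; in particular $M$ is non-empty.

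Next I would verify that $Th^{gen}(\S)$ decides every sentence. Fix a first-order sentence $\sigma$. Regarding $\sigma$ as an $\L_{\omega_1,\omega}$-sentence (a first-order sentence is a special case), the zero-one law gives that $\ModS(\sigma)$ is either meager or comeager in $\S$. In the comeager case $\sigma\in Th^{gen}(\S)$; in the meager case, since $\ModS(\sigma)$ and $\ModS(\lnot\sigma)$ partition $\S$, the set $\ModS(\lnot\sigma)$ is comeager and so $\lnot\sigma\in Th^{gen}(\S)$. Moreover, as the non-empty closed subspace $\S$ is a Baire space, it cannot be written as the union of the two meager sets $\ModS(\sigma)$ and $\ModS(\lnot\sigma)$, so exactly one of the two alternatives occurs. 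Thus $Th^{gen}(\S)$ is a consistent theory containing, for every $\sigma$, either $\sigma$ or $\lnot\sigma$, i.e.\ it is complete. Consequently any two points of $M$ satisfy the same complete theory and are therefore elementarily equivalent, so $M$ lies inside a single elementary equivalence class $E$; since $M\subseteq E$ and $M$ is comeager, $E$ is comeager as well.

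The only point requiring care is the Baire-category bookkeeping, and this is the step I would expect to be the main obstacle were $\S$ a more general space: we need $\S$ to be a Baire space so that comeager sets are genuinely large (in particular non-empty), so that countable intersections of comeager sets remain comeager, and so that a set and its complement cannot both be meager. For a closed subspace of the Polish space $\G$ all of these hold automatically, so no extra work is needed here; the substantive content is supplied entirely by the zero-one law assumed in the hypothesis.
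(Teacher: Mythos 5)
Your proposal is correct and follows essentially the same route as the paper: the paper likewise forms $Th^{gen}(\S)$, uses countability to get a comeager set of common models, and uses the zero-one law together with the Baire property of the closed subspace $\S$ (so that $\ModS(\sigma)\cup\ModS(\lnot\sigma)=\S$ forces one of the two to be comeager) to conclude that $Th^{gen}(\S)$ is complete. No substantive differences.
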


Conditions (a)--(c) from Theorem \ref{01} seem rather restrictive. Our next goal is to provide non-trivial examples of subspaces $\S\subseteq \G$ for which these conditions hold. We begin by exploring a straightforward approach.

\begin{defn}
We say that a finitely generated group $G$ is \emph{condensed} if $[G]$ is non-discrete.
\end{defn}

It is easy to show that for every finitely generated group $G$, the isomorphism class $[G]$ is either discrete or has no isolated points (Corollary \ref{dych}). In the former case, the zero-one law for $\mathcal L_{\omega_1, \omega}$-sentences is vacuously true for $\S=\overline{[G]}$ as $[G]$ is comeager in $\S$. On the other hand, condensed groups lead to non-trivial instances of the zero-one law. This motivates the study of condensed groups carried out in Section 6.1. We briefly summarize our results here.

First, we show that condensed groups cannot occur in ``reasonably good" classes.

\begin{prop}\label{no-cond}
No finitely generated linear group or finitely presented residually finite group is condensed. In particular, groups of polynomial growth cannot be condensed.
\end{prop}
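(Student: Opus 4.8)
The plan is to prove directly that for each of these classes the isomorphism class $[G]$ is discrete, which is precisely the assertion that $G$ is not condensed. By the dichotomy of Corollary~\ref{dych} it would even suffice to exhibit a single isolated point of $[G]$, but in fact the argument below shows that every marking is isolated. Throughout I will use the standard reformulation in which a point of $\Gn$ lying in $[G]$ is recorded by the normal subgroup $N=\ker(F_n\twoheadrightarrow G)$ determined by the marking; in these terms the topology on $[G]\cap\Gn$ is the Chabauty topology, so two markings are close exactly when the corresponding normal subgroups contain the same words up to some large length $R$ (equivalently, when the radius-$R$ balls of the Cayley graphs agree).

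I would first handle the finitely presented residually finite case, which is the more elementary one. Fix a marking $(G,A)$ with $A=(a_1,\dots,a_n)$, choose a finite presentation $G=\langle A\mid r_1,\dots,r_m\rangle$, set $\ell=\max_j|r_j|$ and $N=\ll r_1,\dots,r_m\rr$. If $(H,B)\in[G]$ has kernel $N'$ agreeing with $N$ on words of length $\le\ell$, then $r_1,\dots,r_m\in N'$, so $N\subseteq N'$ and the identity of $F_n$ induces a surjection $G=F_n/N\twoheadrightarrow F_n/N'=H$ sending $a_j\mapsto b_j$. Since $G$ is finitely generated and residually finite it is Hopfian (Malcev); composing the surjection $G\twoheadrightarrow H$ with an isomorphism $H\xrightarrow{\sim}G$ gives a surjective endomorphism of $G$, hence an automorphism, so $G\twoheadrightarrow H$ is an isomorphism and $N'=N$. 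Thus $(G,A)$ is isolated in $[G]$.

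For the linear case the obstruction is that a finitely generated linear group need not be finitely presented, so nearby marked groups need not be quotients of $G$ and the previous argument has no starting point. The key replacement is that finitely generated linear groups are equationally Noetherian (Bryant; Baumslag--Myasnikov--Remeslennikov): writing $G\le GL_d(K)$, the system of all coefficient-free equations $\{\,w(x_1,\dots,x_n)=1 : w\in N\,\}$ has the same solution set in $G^n$ as a finite subsystem $w_1,\dots,w_m\in N$. Put $R=\max_j|w_j|$ and suppose $(H,B)\in[G]$ has kernel $N'$ agreeing with $N$ up to length $R$; fix an isomorphism $\psi\colon H\xrightarrow{\sim}G$. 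Each $w_j\in N$ has length $\le R$, so $w_j\in N'$, i.e. $w_j(b_1,\dots,b_n)=1$ in $H$, whence $w_j(\psi(b_1),\dots,\psi(b_n))=1$ in $G$. By the choice of the $w_j$ the tuple $(\psi(b_1),\dots,\psi(b_n))$ then satisfies every $w\in N$, so $x_j\mapsto\psi(b_j)$ kills $N$ and factors through an endomorphism $\phi\colon G\to G$ with $\phi(a_j)=\psi(b_j)$. The elements $\psi(b_j)$ generate $G$, so $\phi$ is onto, hence an automorphism by Hopficity (linear groups being residually finite). Then $\psi^{-1}\phi\colon G\to H$ sends $a_j\mapsto b_j$ and is an isomorphism, so $(H,B)=(G,A)$ and $N'=N$; again $(G,A)$ is isolated in $[G]$.

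Finally, the polynomial-growth assertion would follow from Gromov's theorem: a finitely generated group of polynomial growth is virtually nilpotent, and such groups are linear (finitely generated nilpotent groups embed in $GL_d(\ZZ)$ and linearity is inherited by finite extensions), so the linear case applies. I expect the only genuine difficulty to lie in the linear case: the whole point is to substitute the Noetherian control of the solution set of the defining equations over $G$ for the unavailable property ``nearby groups are quotients,'' and then to transport it through the auxiliary isomorphism $\psi$ to invoke Hopficity. The finitely presented residually finite case and the reduction for polynomial growth should be routine once this mechanism is set up.
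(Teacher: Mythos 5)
Your proof is correct. The finitely presented residually finite case and the polynomial-growth reduction follow the paper's route exactly: the paper proves (Proposition \ref{cond}(b)) that a finitely presented condensed group is extremely non-Hopfian, via Proposition \ref{Quot}, and then invokes Malcev's theorems -- this is precisely your first argument in contrapositive form. Where you genuinely diverge is the linear case. The paper subsumes it under the statement that no finitely generated \emph{equationally Noetherian} group is condensed, proved indirectly: for such $G$ there are only countably many $G$-limit groups, so $\overline{[G]}$ is countable, whereas a condensed $G$ would make $\overline{[G]}$ a non-empty perfect Polish space, hence uncountable by Baire category (using Corollary \ref{dych}). You instead give a direct, hands-on proof that every marking of $G$ is isolated: extract a finite subsystem $w_1,\dots,w_m$ of the defining relations controlling the full solution set, transport a nearby marking through an auxiliary isomorphism $\psi$ to get a surjective endomorphism of $G$, and conclude by Hopficity. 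Your argument is more elementary and self-contained (no limit groups, no Baire category), at the cost of needing residual finiteness of finitely generated linear groups on top of equational Noetherianity; the paper's argument covers all finitely generated equationally Noetherian groups (e.g.\ abelian-by-nilpotent or hyperbolic ones) without any Hopficity input, though your mechanism would also extend to that generality since finitely generated equationally Noetherian groups are Hopfian. One small point worth making explicit: since you work inside a fixed $\G_n$, you should note that for $r\ge 1$ a basic neighborhood of $(G,A)\in\G_n$ in the ambient space $\G$ meets $\G_m$, $m>n$, only in tuples padded by identities, so isolation in $\G_n$ does give isolation in $\G$; this is routine and the paper glosses over it too.
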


On the other hand, we have the following.

\begin{ex}\label{excond}
\begin{enumerate}
\item[(a)] If $G$ is a finitely generated group such that $G\cong G\times G$, then $G$ is condensed. Examples of finitely generated groups isomorphic to their direct squares were first constructed by Jones \cite{Jon}. Subsequently, Meier \cite{Mei} found a much simpler construction.
\item[(b)] The iterated monodromy group $IMG(z^2+i)$ of the polynomial $z^2+i$ is condensed. This result follows immediately from the work of Nekrashevich \cite{Nek}. It is worth mentioning that, in addition to being condensed, the group $IMG(z^2+i)$ enjoys many other interesting properties, e.g., it is residually finite, torsion, and of intermediate growth.
\end{enumerate}
\end{ex}

Interestingly, the zero-one law for $\L_{\omega_1,\omega}$-sentences applied to the space $\overline{[IMG(z^2+i)]}$ allows us to recover another result of Nekrashevich: \emph{there exist $2^{\aleph_0}$ pairwise non-isomorphic, finitely generated, residually finite groups of intermediate growth with isomorphic  profinite completions} (see Proposition \ref{Nek}).

There is a close relation between the existence of condensed groups in a subspace $\S\subseteq \G$ and the complexity of the isomorphism relation. Recall that an equivalence relation $E$ on a topological space $X$ is called \emph{smooth} if there is a Polish space $P$ and a Borel map $\beta \colon X\to P$ such that for any $x,y\in X$, we have $xEy$ if and only if $\beta(x)=\beta(y)$. Informally, smoothness means that elements of $X$ can be ``explicitly classified" up to the equivalence relation $E$ using invariants from a Polish space. The following proposition is an easy corollary of the fundamental result in the theory of Borel equivalence relations, known as the Glimm-Effros dichotomy \cite{HKL}.

\begin{prop}\label{smooth}
An isomorphism-invariant closed subset $\S\subseteq \G$ contains a condensed group if and only if the isomorphism relation on $\S$ is not smooth.
\end{prop}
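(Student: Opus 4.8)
The plan is to realize the isomorphism relation on $\S$ as the orbit equivalence relation of a countable group action and then read off the statement from the Glimm--Effros dichotomy. Concretely, the isomorphism relation $E$ on $\G$ is the orbit equivalence relation of the natural continuous action of the countable group $\Aut$, since two marked groups are isomorphic exactly when a finitely supported automorphism of $F_\infty$ takes one defining normal subgroup to the other. Restricting to the closed, invariant, Polish subspace $\S$ makes $E|_\S$ a countable Borel equivalence relation whose classes are the isomorphism classes $[G]$ with $G\in\S$. For such relations the dichotomy \cite{HKL}, together with Effros's theorem, says that $E|_\S$ is smooth if and only if every orbit $[G]$ is locally closed, and that an orbit is locally closed precisely when it is non-meager in its closure. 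So everything reduces to showing that $[G]$ fails to be locally closed if and only if $G$ is condensed.

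First I would treat the direction where $\S$ contains a condensed group $G$. Then $[G]$ is non-discrete, so by Corollary \ref{dych} it has no isolated points; since $[G]$ is dense in the closed set $\overline{[G]}\subseteq\S$, a short density argument upgrades this to $\overline{[G]}$ being perfect (an isolated point of $\overline{[G]}$ would, by density, lie in $[G]$ and be isolated there). As $[G]$ is a countable orbit dense in the perfect Polish space $\overline{[G]}$, it is meager there, hence not locally closed by Effros's theorem, and therefore $E|_\S$ is not smooth.

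Conversely, assuming $\S$ has no condensed group, every orbit $[G]$ is discrete by Corollary \ref{dych}, and I would show each is locally closed. Given $x\in[G]$ with an open $U$ satisfying $U\cap[G]=\{x\}$, any point $p\in U\cap\overline{[G]}$ with $p\ne x$ would admit a small ball inside $U$ missing $x$ yet meeting $[G]$ (because $p\in\overline{[G]}$), contradicting $U\cap[G]=\{x\}$; hence $U\cap\overline{[G]}=\{x\}$ and $x$ is isolated in $\overline{[G]}$. As this holds for every point of $[G]$, the orbit is open in its closure, i.e.\ locally closed, and the dichotomy yields smoothness of $E|_\S$.

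The one point that deserves care is the exact formulation of the dichotomy being invoked: the equivalence ``smooth $\Leftrightarrow$ all orbits locally closed'' for orbit equivalence relations of Polish (here countable) group actions, which is the topological Glimm--Effros theorem refined by Effros's characterization of locally closed orbits. Once this is in hand, the remaining work is the purely topological, and elementary, matching of the discreteness/no-isolated-points dichotomy of Corollary \ref{dych} with local closedness; the only slightly delicate step is the observation that discreteness of an orbit actually forces isolation of its points within its closure.
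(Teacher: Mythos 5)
Your argument is correct, but it routes the descriptive set theory differently from the paper. You package both implications into the statement ``for a continuous action of a countable group on a Polish space, the orbit equivalence relation is smooth if and only if every orbit is locally closed, if and only if every orbit is non-meager in its closure'' (Effros's theorem for $F_\sigma$ orbit equivalence relations, combined with the Glimm--Effros dichotomy), and then reduce the proposition to the elementary observation that, by Corollary \ref{dych}, an orbit $[G]$ is locally closed exactly when $G$ is not condensed. The paper instead uses only Theorem \ref{GE} as a black box: for ``not smooth $\Rightarrow$ contains a condensed group'' it observes directly that the image of a continuous embedding of $E_0$ consists of condensed groups, and for ``condensed $\Rightarrow$ not smooth'' it gives a hands-on Baire category argument (generic continuity of Borel maps \cite[Theorem 8.38]{Kech} plus topological transitivity of the $\Aut$-action on $\overline{[G]}$ via Lemma \ref{tt}) which is, in effect, a proof of the relevant instance of the Effros direction you are citing. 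So your proof is shorter but leans on a stronger external theorem than the paper ever states, while the paper's is longer but self-contained modulo \cite{HKL}. Your two purely topological steps are both sound: a countable orbit dense in its perfect closure is meager there, hence cannot be open in its closure; and a discrete subspace of a metric space is open in its closure (your ``small ball around $p$ inside $U$ missing $x$'' argument). If you write this up, be precise about the source of the implication ``smooth $\Rightarrow$ all orbits locally closed'': it is Effros's theorem for Polish group actions with $F_\sigma$ orbit equivalence relation (countable group actions qualify), and it does not follow from the paper's Theorem \ref{GE} alone.
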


We mention one immediate application. Williams \cite{W} proved that the isomorphism relation on the subspace $\S\subseteq \G$ consisting of $3$-step solvable groups is non-smooth. Thus, we obtain the following.

\begin{cor}
There exist finitely generated, condensed, solvable of step $3$ groups.
\end{cor}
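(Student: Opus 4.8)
The plan is to obtain the corollary as an immediate consequence of Williams' non-smoothness theorem together with Proposition \ref{smooth}. Set
$\S=\{(G,A)\in\G\mid G^{(3)}=1\}$,
the space of marked groups that are solvable of derived length at most $3$. First I would verify that $\S$ satisfies the hypotheses of Proposition \ref{smooth}. Isomorphism-invariance is immediate, since derived length is an isomorphism invariant. Closedness holds because the condition $G^{(3)}=1$ is a law: it asserts that a fixed countable family of iterated-commutator words evaluates to $1$ on all tuples of elements, and each such condition is closed in the topology of local convergence of Cayley graphs. Concretely, $(G,A)\notin\S$ precisely when some iterated commutator of the generators of the appropriate nesting depth is nontrivial, and this can be detected in a finite ball of $\Gamma(G,A)$, so the complement of $\S$ is open.

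Next I would transport Williams' result to $\S$. Williams \cite{W} shows that the isomorphism relation is non-smooth on the space of $3$-step solvable groups, which embeds in $\S$ as an isomorphism-invariant Borel subset. Since smoothness is inherited by invariant Borel subspaces—any Borel map witnessing smoothness of the isomorphism relation on $\S$ restricts to such a map on an invariant subset—non-smoothness on Williams' space forces the isomorphism relation on the larger closed space $\S$ to be non-smooth as well. With both points in hand, Proposition \ref{smooth} applies and produces a condensed group $(G,A)\in\S$; by construction $G$ is finitely generated and solvable of derived length at most $3$.

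The one place I expect to need care is the phrase \emph{of step $3$}: the group produced by Proposition \ref{smooth} a priori only satisfies $G^{(3)}=1$ and could in principle have smaller derived length. I would resolve this by locating the condensed witness inside the genuine length-$3$ locus. The cleanest route is to observe that Williams' construction already lives in derived length exactly $3$, so the non-smoothness it supplies persists on the isomorphism-invariant Borel sublocus $\{(G,A)\in\S\mid G^{(2)}\ne 1\}$; consequently any closed isomorphism-invariant space capturing that non-smoothness contributes a condensed group of derived length exactly $3$. If one prefers to argue from the output of Proposition \ref{smooth} directly, an alternative is to post-compose with a direct product by a fixed finitely generated group of derived length exactly $3$, the only thing to check being that condensedness survives the product—namely that a nontrivial sequence of pairwise non-isomorphic approximants to $G$ induces one for the product. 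Pinning down the exact derived length of the condensed witness is thus the sole genuine subtlety; everything else is a formal consequence of the two cited results.
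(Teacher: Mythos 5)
Your proposal is correct and is essentially the paper's own argument: the corollary is obtained as an immediate consequence of Williams' non-smoothness theorem together with Proposition \ref{smooth}, applied to the closed, isomorphism-invariant subspace of marked groups satisfying $G^{(3)}=1$. The one subtlety you flag—that the condensed witness might a priori have derived length less than $3$—is settled more cleanly than by either of your suggested fixes via Proposition \ref{cond}(a): a metabelian group is abelian-by-nilpotent, hence equationally Noetherian, hence not condensed, so the witness automatically has derived length exactly $3$ (whereas your direct-product remedy would still require a separate verification that condensedness survives taking a product with a fixed group, and your restriction to the locus $G^{(2)}\ne 1$ leaves the closed set needed for Proposition \ref{smooth} unavailable without further argument).
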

It is worth noting that this result is improved in the forthcoming paper \cite{Osi20}, where (nilpotent of step $2$)-by-abelian condensed groups are constructed by a different method.  On the other hand, it is easy to show that finitely generated solvable of step $2$ groups cannot be condensed (see Proposition \ref{cond}).

We now turn to more natural examples, which served as the main source of motivation for our work. Recall that a geodesic metric space $X$ is \emph{hyperbolic} if there exists a constant $\delta\ge 0$ such that for every geodesic triangle $\Delta $ in $X$, every side of $\Delta $ is contained in the $\delta$-neighborhood of the union of the other two sides. A group is \emph{hyperbolic}  if it acts properly and cocompactly on a hyperbolic metric space. Replacing properness with its relative analogue modulo a given collection of subgroups (called \emph{peripheral subgroups}) leads to the notion of \emph{relative hyperbolicity}.

These definitions were suggested by Gromov in \cite{Gro}. Ever since, the study of hyperbolic and relatively hyperbolic groups has been one of the main directions in geometric group theory. A further generalization, the notion of an acylindrically hyperbolic group, was  introduced in \cite{Osi16} and received considerable attention in the past years. For a survey of recent developments, we refer to \cite{Osi18}.

A hyperbolic group is called \emph{elementary} if it is virtually cyclic. A relatively hyperbolic group $G$ is \emph{elementary} if it is virtually cyclic or one of the peripheral subgroups coincides with $G$. Let $\mathcal H$ (respectively, $\mathcal {RH}$, $\mathcal {AH}$) denote the subspace of $\G$ consisting of all pairs $(G,A)$ such that $G$ is a non-elementary hyperbolic (respectively, non-elementary relatively hyperbolic, acylindrically hyperbolic) group. For a subset $\mathcal Z\subseteq \G$, we let $\mathcal Z_{tf}$ (respectively, $\mathcal Z_0$) denote the subset of all torsion free groups from $\mathcal Z$ (respectively, all groups from $\mathcal Z$ without non-trivial finite normal subgroups). Thus we have the following diagram.

\begin{equation}\label{diag}
\begin{array}{ccccc}
  {\mathcal H}_{tf} & \subset  &  {\mathcal {RH}}_{tf} & \subseteq &  {\mathcal {AH}}_{tf} \\
  \cap &   & \cap &   & \cap \\
   {\mathcal H}_0 & \subset  & {\mathcal {RH}}_0 & \subseteq &  {\mathcal {AH}}_0\\
   \cap &   & \cap &   & \cap \\
   {\mathcal H} & \subset  & {\mathcal {RH}} & \subseteq &  {\mathcal {AH}}
\end{array}
\end{equation}

We also denote by $\overline{\mathcal Z}$  the closure of a subset $\mathcal Z$ in $\G$. It turns out that $\overline{\mathcal H}$, $\overline{\mathcal {RH}}$, and $\overline{\mathcal {AH}}$ do not satisfy the zero-one law for $\mathcal L_{\omega_1, \omega}$-sentences. On the other hand, Champetier \cite{Cha} showed that $\overline{\mathcal H}_{tf}$ contains a dense isomorphism class. Moreover, the small cancellation technique developed in \cite{Ols,Osi10,Hull} allows us to easily verify condition (a) from Theorem \ref{01} for the closures of all classes in the first two rows of the diagram (\ref{diag}).

\begin{thm}\label{hyp}
The subsets shown on diagram (\ref{diag}) have no isolated points and the following hold.
\begin{enumerate}
\item[(a)] Every comeager subset of $\overline{\mathcal H}$, $\overline{\mathcal {RH}}$, or $\overline{\mathcal {AH}}$ has infinitely many elementary equivalence classes; in particular, these spaces do not satisfy the zero-one law for $\mathcal L_{\omega_1, \omega}$-sentences.
\item[(b)] The spaces $\overline{\mathcal H}_{tf}$, $\overline{\mathcal {RH}}_{tf}$, $\overline{\mathcal {AH}}_{tf}$, $\overline{\mathcal H}_{0}$, $\overline{\mathcal {RH}}_{0}$, and $\overline{\mathcal {AH}}_{0}$ satisfy the zero-one law for $\mathcal L_{\omega_1, \omega}$-sentences; in particular, they contain comeager elementary equivalence classes.
\end{enumerate}
\end{thm}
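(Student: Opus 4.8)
The plan is to handle the three assertions separately, using Theorem \ref{01} and Proposition \ref{Th-gen} as the main engines. For \emph{no isolated points}, I would show that each $(G,A)$ in one of the nine subsets $\mathcal Z$ of (\ref{diag}) is a limit of pairwise non-isomorphic members of $\mathcal Z$. Since a non-elementary hyperbolic (relatively hyperbolic, acylindrically hyperbolic) group is SQ-universal, one may kill a single long element $w_i$, with $|w_i|\to\infty$, by a small cancellation relation as in \cite{Ols,Osi10,Hull}; the quotients $G/\ll w_i\rr $ remain in the same class, stay torsion free (resp. without non-trivial finite normal subgroup) when $G$ is, and have the same balls of radius $r<|w_i|/2$ as $(G,A)$, so they converge to $(G,A)$. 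Choosing the $w_i$ so that the quotients are proper and infinite makes them non-isomorphic to $G$.

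\emph{Proof of (b).} By Theorem \ref{01} it suffices to verify condition (a) of that theorem for each of the six spaces, i.e. that any two non-empty open sets $U,V$ are bridged by a single isomorphism class. For $\overline{\mathcal H}_{tf}$ this is immediate from Champetier's dense isomorphism class \cite{Cha} (condition (b) of Theorem \ref{01}). In general, given class members $(H_1,A_1)\in U$ and $(H_2,A_2)\in V$, I would form $K=H_1\ast H_2$ (again in the class, and torsion free / without finite normal subgroups when the factors are) and pass to a quotient $G$ of $K$ obtained by imposing relations $b_j=u_j(A_1)$ and $a_i=v_i(A_2)$ for \emph{long} words $u_j,v_i$ satisfying a small cancellation condition over $K$ in the sense of \cite{Ols,Osi10,Hull}. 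These relations force each of $A_1$ and $A_2$ to generate $G$, while their length keeps them invisible in balls of the relevant radius; hence $(G,A_1)\in U$ and $(G,A_2)\in V$, verifying condition (a). Theorem \ref{01} then yields the zero-one law, and Proposition \ref{Th-gen} the comeager elementary equivalence class.

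\emph{Proof of (a).} The point is that the three bottom-row spaces, unlike the others, contain groups with non-trivial finite normal subgroups, a structure detected by the definable centre $Z(G)=\{z:\forall x\ zx=xz\}$. I would first record the principle that a comeager set $C$ meets every non-meager, elementarily closed set $E$ (as $C\cap E$ is non-meager, hence non-empty), so it suffices to exhibit infinitely many pairwise distinct complete first-order theories $T_1,T_2,\dots$ each with a non-meager model set. Concretely, I would tune the elementary theory of the centre: central extensions built from long small cancellation relations (forcing a long word to become a central element of prescribed finite order, exactly as in the density argument above) realise, densely in suitable basic open sets, centres with prescribed invariants---for instance prescribing whether central elements of order $2^n$ are or are not central squares. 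Since there are infinitely many elementary types of torsion abelian groups and each target can be made non-meager, $C$ must contain representatives of infinitely many $T_i$. The final clause is then formal: were the zero-one law to hold, Proposition \ref{Th-gen} would produce a comeager single elementary equivalence class, contradicting this.

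\emph{Main obstacle.} The genuinely routine half is (b): once the small cancellation bookkeeping is in place, condition (a) of Theorem \ref{01} is mechanical. The hard part will be the non-meagerness and ``independence'' in the proof of (a). Every existential statement about central torsion is open and, by the central-extension density construction, comeager, so its negation is meager; the separating sentences must therefore involve real quantifier alternation (presence of central elements of a given order and height, and the like), and showing that infinitely many mutually incompatible such conditions are \emph{simultaneously} non-meager requires realising each prescribed cell densely inside an open set by a delicate small cancellation construction while keeping the ambient group in the prescribed class. Balancing these competing genericity requirements is the crux of the argument.
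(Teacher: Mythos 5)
Your treatment of the absence of isolated points and of part (b) matches the paper's route: the paper proves exactly your two small cancellation statements as Lemma \ref{lemdense} (via Dehn filling along a hyperbolically embedded copy of $F_2\times K(G)$, which is how it keeps the finite radical alive when $K(G)\ne 1$) and Lemma \ref{Q} (common small cancellation quotients of $H_1\ast H_2$ via \cite{Ols,Osi10,Hull}), and then verifies condition (a) of Theorem \ref{01} on basic open sets precisely as you propose. Those two parts are fine.

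Part (a) contains a genuine gap, and you have located it yourself without closing it. Your strategy needs infinitely many pairwise incompatible first-order conditions, each with a non-meager set of models in $\overline{\mathcal H}$. As you observe, purely existential central-torsion sentences cannot do the separating (if they are all generically true, a comeager set satisfies all of them), so you are pushed to quantifier-alternating sentences, and the entire burden of the proof becomes the claim that infinitely many mutually exclusive such sentences each hold non-meagerly. You call this ``the crux'' and defer it to ``a delicate small cancellation construction''; nothing in the proposal produces it, and it is far from clear that central-torsion invariants can be made to work, since controlling the center of \emph{every} group in a neighborhood (rather than of the one group you build) is exactly the kind of $\forall$-condition that does not pass to quotients or limits for free. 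The paper's proof rests on a different choice of invariant designed to dissolve this difficulty: the groups $H_p=(\ZZ/p\ZZ)^p\rtimes(\ZZ/p\ZZ\ast\ZZ/p\ZZ)$ and the sentences $\phi_p$ of Lemma \ref{fip}, asserting the existence of $p$ commuting elements of order dividing $p$ that are permuted by conjugation by every group element, non-trivially by at least one. The key mechanism is that the universally quantified clause ``conjugation by every $g$ permutes $x_1,\dots,x_p$'' follows from the same statement for the generators alone, which is visible in a ball of bounded radius; hence $\phi_p$ holds for \emph{every} group in the basic open set $U_p$ around $(H_p,X_p)$, not merely generically, while every group in $U_p$ is generated by elements of order $p$ and therefore admits no non-trivial homomorphism to $S_q$ for a prime $q<p$, so it cannot satisfy $\phi_q$. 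This yields infinitely many non-empty open sets with pairwise incompatible theories, which is exactly what your argument requires and does not supply. Arranging the $\forall$-part of the separating sentence so that it is certified on generators, and hence on a whole open set, is the missing idea.
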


An additional motivation for our work comes from the fact that the standard tools for constructing models -- ultrapowers, omitting types, and the L\" owenheim-Skolem theorem -- are not available for finitely generated groups. There are very few explicit constructions producing elementarily equivalent finitely generated groups and none of them is capable of producing ``massive" elementary equivalence classes. This difficulty is well-illustrated by the following question, sometimes attributed to Sabbagh (see, e.g., \cite{OH}): \emph{Does there exist a complete theory in the language of groups having $2^{\aleph_0}$ finitely generated models?}

In the MathOverflow post \cite{T-MO}, Thomas noticed that the affirmative answer follows from non-smoothness of the isomorphism relation on $\G$, which was proved earlier by Thomas and Velickovich \cite{Tho}. It is worth noting that Theorem  \ref{hyp} provides natural examples of complete theories (namely, $Th^{gen}(\S)$, where $\S$ is any of the spaces listed in part (b) of the theorem) having $2^{\aleph_0}$ pairwise non-isomorphic finitely generated models. Indeed, it is well-known and easy to prove that every comeager subset of a closed Polish space without isolated points has the cardinality of the continuum. Since isomorphism classes in $\G$ are countable, the comeager elementary equivalence classes from part (b) of the theorem contain $2^{\aleph_0}$ pairwise non-isomorphic groups.

It is also interesting to compare Theorem \ref{hyp} to some known results about hyperbolic groups. Geometric methods developed in \cite{Sel} allowed Sela to give a complete description of all finitely generated groups with the same elementary theory as a given torsion-free hyperbolic group. One corollary of this description is the following.

\begin{thm}[Sela, {\cite[Proposition 7.1]{Sel}}]\label{Sela}
Torsion-free hyperbolic groups having property $FA$ of Serre are elementarily equivalent if and only if they are isomorphic.
\end{thm}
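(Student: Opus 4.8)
The plan is to establish the two directions separately, with the forward implication (isomorphism implies elementary equivalence) being immediate, since isomorphic structures satisfy exactly the same first-order sentences. All the content lies in the converse: I must show that two torsion-free hyperbolic groups $\Gamma_1,\Gamma_2$ with property $FA$ that are elementarily equivalent are in fact isomorphic.

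The key input is Sela's description, developed in \cite{Sel}, of the finitely generated groups sharing the elementary theory of a given torsion-free hyperbolic group. I would invoke the notion of the \emph{elementary core} $EC(\Gamma)$ of a torsion-free hyperbolic group $\Gamma$: it is again a torsion-free hyperbolic group, $\Gamma$ is a \emph{hyperbolic tower} over $EC(\Gamma)$, and for torsion-free hyperbolic groups one has $\Gamma_1\equiv\Gamma_2$ if and only if $EC(\Gamma_1)\cong EC(\Gamma_2)$. Granting this, the entire problem reduces to showing that property $FA$ forces $\Gamma$ to coincide with its own elementary core.

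The heart of the argument is therefore the following observation. A hyperbolic tower is built as a finite iteration of \emph{hyperbolic floors} and free products with free groups, starting from the core. If the tower were nontrivial, its top floor would exhibit $\Gamma$ as a genuine graph-of-groups decomposition: either an amalgamated product or HNN extension along cyclic subgroups (when a surface floor is added), or a nontrivial free product (when a free factor is added). By Bass--Serre theory, any such splitting yields an action of $\Gamma$ on a tree with no global fixed point, and a nontrivial free factor moreover provides an epimorphism onto $\ZZ$; both are forbidden by property $FA$. Hence the tower must be trivial, so $\Gamma\cong EC(\Gamma)$. Applying this to $\Gamma_1$ and $\Gamma_2$ and combining it with the isomorphism $EC(\Gamma_1)\cong EC(\Gamma_2)$ coming from $\Gamma_1\equiv\Gamma_2$, I conclude $\Gamma_1\cong\Gamma_2$.

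The main obstacle is not this last deduction, which is short, but the legitimacy of the black box I am invoking: Sela's classification of the models of the theory of a torsion-free hyperbolic group in terms of hyperbolic towers over the elementary core is the culmination of a long and difficult series of papers, and I would cite it rather than reprove it. Within that framework the only genuinely new point is the clean incompatibility between the tower structure and property $FA$, and some care is needed to ensure that \emph{every} type of floor---surface floors and free-product floors alike---produces either a nontrivial splitting or a $\ZZ$-quotient visible to $FA$, so that the tower collapses completely rather than only partially.
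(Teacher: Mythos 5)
The paper contains no proof of Theorem \ref{Sela}: the statement is imported verbatim from Sela \cite{Sel}, presented as a corollary of his classification of the finitely generated groups elementarily equivalent to a given torsion-free hyperbolic group. Your sketch---reduce to the elementary core via the hyperbolic-tower structure and observe that property $FA$ forbids the cyclic splittings and free-product decompositions produced by any nontrivial floor---is exactly that deduction, so it is correct modulo the black box you explicitly acknowledge and matches the approach of the cited source.
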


Recall that a group $G$ has property $FA$ if every action of $G$ on a simplicial tree without invertions of edges fixes a vertex. Random hyperbolic groups (i.e., random groups in the Gromov density model with $d<1/2$) are almost surely torsion free and have property $FA$ \cite{DGP,Oli}. This leads to the following ``generic rigidity" phenomenon: \emph{for random hyperbolic groups, elementary equivalence is almost surely equivalent to isomorphism.}

Closely related to hyperbolic groups, is the more general class of lacunary hyperbolic groups introduced in \cite{OOS}. A finitely generated group $G$ is \emph{lacunary hyperbolic} if at least one asymptotic cone of $G$ is an $\mathbb R$-tree; for finitely presented groups, this condition is equivalent to hyperbolicity. A more ``constructive" equivalent definition is given in Section 6.3 (see Theorem \ref{LH-def}).

Lacunary hyperbolic groups occur as generic objects in $\overline{\mathcal H}$. As such, they share many common properties with random hyperbolic groups, including property $FA$ and even much stronger property (T) of Kazhdan. Thus it is reasonable to expect that generic torsion-free lacunary hyperbolic groups exhibit a similar first-order rigidity. Contrary to these expectations, the real situation is completely opposite: we prove that \emph{generic torsion-free lacunary hyperbolic groups are elementarily equivalent}.

More precisely, let $\mathcal{LH}$ denote the subspace of all pairs $(G,A)\in \G$ such that $G$ is lacunary hyperbolic and not virtually cyclic.

\begin{thm}\label{LHgen}
The spaces $\mathcal{LH}$, $\mathcal{LH}_0$, and $\mathcal{LH}_{tf}$ are dense $G_\delta$-subsets of $\overline{\mathcal H}$, $\overline{\mathcal H}_{0}$, and $\overline{\mathcal H}_{tf}$, respectively, homeomorphic to the subspace of irrational numbers in $\mathbb R$.
\end{thm}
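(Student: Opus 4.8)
The plan is to establish, for each of the three spaces, the three asserted features separately: density in the relevant closure, the $G_\delta$ condition, and the homeomorphism with the irrationals. For the last I would invoke the Alexandrov--Urysohn characterization: a space is homeomorphic to the irrationals precisely when it is non-empty, Polish, zero-dimensional, and nowhere locally compact. Thus the whole statement reduces to verifying density, the $G_\delta$ property, and these four features, and I would treat $\mathcal{LH}$ in detail, the two subclasses being identical modulo preservation of the defining property under the constructions below.

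Density and the $G_\delta$ property come first. Since every hyperbolic group is lacunary hyperbolic and a non-elementary hyperbolic group is not virtually cyclic, we have $\mathcal H\subseteq \mathcal{LH}$; conversely a lacunary hyperbolic group is a limit of hyperbolic groups (Theorem \ref{LH-def}), which surject onto the non-virtually-cyclic limit and are therefore non-elementary, so $\mathcal{LH}\subseteq \overline{\mathcal H}$. As $\mathcal H$ is dense in $\overline{\mathcal H}$ by definition, so is $\mathcal{LH}$, and the chain $\mathcal H_{tf}\subseteq \mathcal{LH}_{tf}\subseteq \overline{\mathcal H}_{tf}$ (resp. with subscript $0$) gives density in the two sub-cases, using that the approximating hyperbolic groups supplied by Theorem \ref{LH-def} can be taken torsion-free (resp. without non-trivial finite normal subgroups). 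For the $G_\delta$ property I would read off from Theorem \ref{LH-def} that $(G,A)$ is lacunary hyperbolic iff for every $m$ it agrees, on some ball of radius $r\ge m$, with a $\delta$-hyperbolic group satisfying $\delta<r/m$. For fixed $m$ the set $U_m$ of such $(G,A)$ is open, since any group sharing the witnessing $r$-ball inherits the same witness; hence the lacunary hyperbolic locus equals $\bigcap_m U_m$, a $G_\delta$. Intersecting with ``not virtually cyclic'' (whose complement, the virtually cyclic groups, is $F_\sigma$, being those of finite or linear growth) and, in the sub-cases, with ``torsion-free'' (a closed condition) or ``no non-trivial finite normal subgroup'' keeps the set $G_\delta$.

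It remains to identify these dense $G_\delta$ sets with the irrationals. Each is Polish, being a $G_\delta$ in the Polish space $\overline{\mathcal H}$; each is zero-dimensional, as $\G$ has a basis of clopen cylinders; and each is non-empty, e.g. $F_2\in\mathcal{LH}_{tf}\subseteq\mathcal{LH}_0\subseteq\mathcal{LH}$. For nowhere local compactness I would argue as follows. Each $\mathcal{G}_n$ is compact and clopen in $\G$, so $\overline{\mathcal H}\cap\mathcal{G}_n$ is compact and clopen in $\overline{\mathcal H}$ and every local question localizes inside it. If some $y\in\mathcal{LH}$ had a compact neighborhood, then by zero-dimensionality it would have a clopen-in-$\mathcal{LH}$ compact neighborhood of the form $C\cap\mathcal{LH}$ with $C$ a clopen cylinder; being compact it is closed in $\overline{\mathcal H}$, and since $\mathcal{LH}$ is dense, $C\cap\mathcal{LH}$ is dense in the clopen set $C$, whence $C\cap\mathcal{LH}=C$ and $C\subseteq\mathcal{LH}$. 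This contradicts density of the complement $\overline{\mathcal H}\setminus\mathcal{LH}$. Thus the theorem reduces to the single claim that the non-lacunary-hyperbolic groups are dense in $\overline{\mathcal H}$ (and in the two sub-closures).

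This last claim is the main obstacle and the only genuinely geometric input. Virtually cyclic groups do not help, since for $r\ge 2$ no interval-like ball can match the branching ball of a non-elementary hyperbolic group; so I must produce, in every basic open set of $\overline{\mathcal H}$, a non-elementary limit group none of whose asymptotic cones is a tree. I would start from a non-elementary hyperbolic group $(H,B)$ in the given open set (available since $\mathcal H$ is dense) and run an infinite iterated small cancellation construction as in \cite{Ols,Osi10,Hull}, choosing the lengths of the added relators so that (i) the successive quotients agree with $H$ on the prescribed ball, keeping the limit in the open set, and (ii) the hyperbolicity constants $\delta_i$ grow linearly rather than sublinearly in the injectivity radii $r_i$, so that $\delta_i/r_i\not\to 0$ along every tail. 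By the criterion of Theorem \ref{LH-def} the resulting limit then has no tree asymptotic cone, hence is not lacunary hyperbolic, while remaining non-elementary and, via the standard torsion-free (resp. finite-radical-free) small cancellation quotients, landing in the appropriate sub-closure. Verifying that the small cancellation parameters can be tuned to achieve (i) and (ii) simultaneously is the crux of the argument.
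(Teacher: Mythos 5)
Your topological skeleton is essentially the paper's: the sets $U_m$ you describe are the paper's sets $U_j(G,X)$, the identification $\mathcal{LH}=\bigcap_m U_m$ is the content of the first half of the paper's proof (together with Corollary \ref{hlh}), density follows from $\mathcal H\subseteq \mathcal{LH}$, and your nowhere-local-compactness argument is exactly the proof of Corollary \ref{CB}. You also correctly isolate the one genuinely geometric input: density of the complement $\overline{\mathcal H}\setminus\mathcal{LH}$ (the paper's Lemma \ref{HLH}).

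It is precisely at that step that your argument has a real gap. You propose to build a limit of hyperbolic quotients of a given $(H,B)\in\mathcal H$ whose hyperbolicity constants $\delta_i$ grow linearly in the injectivity radii $r_i$, and to conclude from Theorem \ref{LH-def} that the limit is not lacunary hyperbolic. But condition (b) of Theorem \ref{LH-def} is \emph{existential} over epimorphic sequences: a group is lacunary hyperbolic if \emph{some} sequence satisfies ($i$)--($iii$). Arranging that your particular sequence fails ($iii$) proves nothing; a different truncation of the presentation, or an entirely different decomposition as a direct limit of hyperbolic groups, could still witness lacunary hyperbolicity. To rule this out you must control \emph{all} asymptotic cones (equivalently, all such sequences), not one. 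This is why the paper does not argue directly with the criterion. Instead it takes the explicit infinitely presented $C'(1/6)$ group $W(k)$ of Example \ref{W}, which is known to be non--lacunary hyperbolic by \cite[Proposition 3.12]{OOS} because its relations occur ``at all scales'' and hence force non-trivial loops in \emph{every} asymptotic cone, and then imports $W(k)$ into an arbitrary $G\in\mathcal H$ as an undistorted subgroup via group-theoretic Dehn filling (Theorem \ref{CEP} applied to $F_2\h G$, with $N=\ker(F_2\to W(k))$). The punchline is \cite[Theorem 3.8]{OOS}: an undistorted finitely generated subgroup of a lacunary hyperbolic group is lacunary hyperbolic, so the filled quotient $Q$, which contains $W(k)$ undistorted and is a limit of the hyperbolic groups $Q_n$ (hyperbolic relative to $W_n(k)$), lies in $\overline{\mathcal H}\setminus\mathcal{LH}$. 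If you want to salvage your direct approach, you would have to replace your appeal to Theorem \ref{LH-def} with an argument that your added relators produce essential loops in every asymptotic cone of the limit group (and you would still face the separate, nontrivial problem of bounding from below the hyperbolicity constants of iterated small cancellation quotients of an arbitrary hyperbolic group); the subgroup-transfer route avoids both difficulties.

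A minor further point: your reduction of the torsion-free and finite-radical cases to the general one quietly uses that the approximating hyperbolic groups in Theorem \ref{LH-def} can be taken in $\mathcal H_{tf}$ (resp.\ $\mathcal H_0$). This is true but requires the observation, used in the proof of Corollary \ref{hlh}, that finite-order elements of a $\delta$-hyperbolic group are conjugate to elements of length at most $4\delta+2$, so that condition ($iii$) eventually forces torsion and finite normal subgroups to survive in the limit.
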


Combining Theorems \ref{hyp} and \ref{LHgen} we immediately obtain the following.

\begin{cor}\label{LH-cor}
The number of elementary equivalence classes in every comeager subset of $\mathcal{LH}$ is infinite. The spaces $\mathcal{LH}_{tf}$ and $\mathcal{LH}_0$ contain comeager elementary equivalence classes.
\end{cor}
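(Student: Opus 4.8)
The plan is to reduce both assertions to Theorem \ref{hyp} by transporting comeagerness across the inclusions supplied by Theorem \ref{LHgen}. The point is that a dense $G_\delta$ subset of a Baire space is comeager, so by Theorem \ref{LHgen} the spaces $\mathcal{LH}$, $\mathcal{LH}_{tf}$, and $\mathcal{LH}_0$ are comeager in $\overline{\mathcal H}$, $\overline{\mathcal H}_{tf}$, and $\overline{\mathcal H}_0$, respectively. (Each of these closures is a closed subspace of the Polish space $\G$, hence a Baire space.) Everything then rests on a single comeagerness-transfer principle, which I would isolate as the only real step.

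The principle I would prove is the following: if $Y$ is a comeager subset of a Baire space $X$ and $A\subseteq Y$, then $A$ is comeager in $Y$ (with the subspace topology) if and only if $A$ is comeager in $X$. The proof is a routine manipulation of nowhere dense sets that hinges on the density of $Y$: if $N$ is nowhere dense in $X$ then $N\cap Y$ is nowhere dense in $Y$, and conversely, if $M$ is nowhere dense in $Y$ then, because $Y$ is dense in $X$, any nonempty open $U\subseteq X$ meeting the $X$-closure of $M$ would force $U\cap Y$ into $\overline{M}^{Y}$, contradicting nowhere density in $Y$; hence $M$ is nowhere dense in $X$ as well. Writing the meager complement of $A$ as a countable union of such sets, and adding the meager set $X\setminus Y$ in the reverse direction, yields both implications. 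This is the step I expect to be the only delicate one; the rest is bookkeeping.

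First I would settle the statement about $\mathcal{LH}$. Let $C$ be any comeager subset of $\mathcal{LH}$. Since $\mathcal{LH}$ is comeager in $\overline{\mathcal H}$, the transfer principle (with $Y=\mathcal{LH}$ and $X=\overline{\mathcal H}$) shows that $C$ is comeager in $\overline{\mathcal H}$. By Theorem \ref{hyp}(a), every comeager subset of $\overline{\mathcal H}$ meets infinitely many elementary equivalence classes, so the same holds for $C$, proving the first claim.

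For the second claim I would argue uniformly for $\mathcal{LH}_{tf}$ and $\mathcal{LH}_0$; take $\mathcal{LH}_{tf}$ for concreteness. By Theorem \ref{hyp}(b), the space $\overline{\mathcal H}_{tf}$ satisfies the zero-one law and therefore contains a comeager elementary equivalence class $E=\{(G,A)\in \overline{\mathcal H}_{tf}\mid G\equiv G_0\}$ for some $G_0$. Then $E\cap \mathcal{LH}_{tf}$ is the intersection of two comeager subsets of $\overline{\mathcal H}_{tf}$, hence comeager in $\overline{\mathcal H}_{tf}$; applying the transfer principle with $Y=\mathcal{LH}_{tf}$ shows that $E\cap \mathcal{LH}_{tf}$ is comeager in $\mathcal{LH}_{tf}$. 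Since $\mathcal{LH}_{tf}$ is a nonempty Baire space (a $G_\delta$ in a Polish space, homeomorphic to the irrationals by Theorem \ref{LHgen}), this comeager set is nonempty, and by construction $E\cap \mathcal{LH}_{tf}=\{(G,A)\in \mathcal{LH}_{tf}\mid G\equiv G_0\}$ is exactly an elementary equivalence class of $\mathcal{LH}_{tf}$. Thus $\mathcal{LH}_{tf}$ contains a comeager elementary equivalence class, and the identical argument over $\overline{\mathcal H}_0$ handles $\mathcal{LH}_0$.
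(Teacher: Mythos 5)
Your proof is correct and is precisely the argument the paper intends: the paper derives Corollary \ref{LH-cor} "immediately" from Theorems \ref{hyp} and \ref{LHgen}, and the only content to supply is that a dense $G_\delta$ subset of a Baire space is comeager together with the standard fact that comeagerness transfers between a Baire space and a comeager subspace, which you state and prove correctly. Your write-up just makes explicit the routine details the paper omits.
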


\paragraph{Structure of the paper and advice to the reader.}
In order to make this paper as self-contained as possible, we review some necessary material from logic and descriptive set theory in the next section. Some of our results, namely Theorems \ref{hyp} and \ref{LHgen}, are proved using a somewhat technical machinery from geometric group theory, which is surveyed in Section 4. Readers not familiar with this technique can simply read the main definitions and examples in Subsections 4.1 and 4.2, skip Subsection 4.3, and take the lemmas proved in Subsection 4.4 as black boxes; modulo these results, all the proofs in our paper are fairly elementary and accessible to a wide mathematical audience. Theorem \ref{01} and some of its corollaries are proved in Section 5. Section 6 is devoted to examples and applications. In particular, we prove Theorem \ref{hyp} and Theorem \ref{LHgen} in Subsections 6.2 and 6.3, respectively. Some open questions related to our work are collected in Section 7. We also briefly discuss a possible generalization of our work to general $\Omega$-algebras there (see Theorem \ref{01A}).

\section{Background from topology and logic}\label{Prelim}

\paragraph{3.1. Elements of descriptive set theory.}
Recall that a topological space is \emph{Polish} if it is completely metrizable and separable. A basic example is the power set $2^S$ of a (discrete) countable set $S$ endowed with the product topology (or, equivalently, the topology of pointwise convergence of indicator functions). By the Tychonoff's theorem, $2^S$ is always compact. If $S$ is countably infinite, $2^S$ is homeomorphic to the Cantor set. The class of Polish spaces is obviously closed under taking closed subspaces.

A\emph{ $G_\delta$-subset} (respectively, an \emph{$F_\sigma$-subset}) of a topological space $X$ is a subset that can be represented as a countable intersection of open sets (respectively, a countable union of closed sets).

\begin{prop}[{\cite[Theorem 3.11]{Kech}}]\label{ps-cl}
A $G_\delta $-subspace of a Polish space is a Polish space.
\end{prop}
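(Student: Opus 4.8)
The plan is to reduce everything to the fact, already noted in the excerpt, that a closed subspace of a Polish space is again Polish. Separability of the $G_\delta$-subspace is not an issue: any subspace of a separable metric space is separable, since separability coincides with second countability for metric spaces and second countability is hereditary. The entire difficulty lies in producing a \emph{complete} compatible metric, because the restriction of the ambient complete metric to a non-closed subset need not be complete.

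First I would treat the case of a single open set. Let $X$ be Polish with complete metric $d$, and let $U\subseteq X$ be open with $F=X\setminus U$ closed and non-empty (the case $F=\emptyset$ being trivial). The key device is the embedding $f\colon U\to X\times\mathbb R$ defined by $f(x)=\bigl(x,\,1/d(x,F)\bigr)$, where $d(x,F)=\inf_{y\in F}d(x,y)>0$ for $x\in U$. Since $x\mapsto d(x,F)$ is continuous and strictly positive on $U$, the map $f$ is a homeomorphism onto its image. The crucial point is that this image is \emph{closed} in $X\times\mathbb R$: if $f(x_k)\to(z,t)$, then $x_k\to z$ and $d(x_k,F)\to d(z,F)$ by continuity, and were $z\in F$ the second coordinate $1/d(x_k,F)$ would diverge, contradicting convergence to $t$; hence $z\in U$ and $(z,t)=f(z)$. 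As $X\times\mathbb R$ is Polish and closed subspaces of Polish spaces are Polish, $U$ is Polish.

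Next I would handle the general $G_\delta$-set $G=\bigcap_{n\in\mathbb N}U_n$ with each $U_n$ open, hence Polish by the previous step. Consider the diagonal map $\Delta\colon G\to\prod_{n\in\mathbb N}U_n$, $\Delta(x)=(x,x,x,\dots)$. A countable product of Polish spaces is Polish, and $\Delta$ is a homeomorphism onto its image because each coordinate of $\Delta$ is simply the inclusion $G\hookrightarrow U_n$. It remains to check that $\Delta(G)$ is closed in the product: if a sequence $\Delta(x_k)=(x_k,x_k,\dots)$ converges to $(z_0,z_1,\dots)$, then $x_k\to z_n$ in $U_n\subseteq X$ for every $n$, so by uniqueness of limits in the Hausdorff space $X$ all the $z_n$ coincide with a single point $z$; since $z_n\in U_n$ we obtain $z\in\bigcap_n U_n=G$, and the limit equals $\Delta(z)$. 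Thus $G$ is homeomorphic to a closed subspace of a Polish space and is therefore Polish.

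The only genuinely delicate point is the open-set case: the naively restricted metric fails to be complete precisely near the boundary $F$, and the role of the auxiliary coordinate $1/d(x,F)$ is to force any sequence approaching $F$ to escape to infinity, thereby destroying its Cauchyness and turning the image into a closed---hence completely metrizable---set. Once this is in place, the passage to countable intersections is a routine closed-diagonal argument.
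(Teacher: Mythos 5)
Your proof is correct. The paper gives no proof of this proposition at all---it simply cites \cite[Theorem 3.11]{Kech}---and your argument (the embedding $x\mapsto\bigl(x,\,1/d(x,F)\bigr)$ for the open case, followed by the closed-diagonal embedding into the countable product for the general $G_\delta$ case) is exactly the standard proof found in that reference.
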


A subset $Y$ of a topological space $X$ is \emph{meager} if it is a union of countably many nowhere dense sets. A \emph{comeager} set is a set whose complement is meager. Equivalently, a subset is comeager if it is an intersection of countably many sets with dense interior. It follows directly from the definitions that the class of meager (respectively, comeager) subsets is closed under countable unions (respectively, countable intersections).

A topological space $X$ is a \emph{Baire space} if the intersection of any countable collection of dense open sets is dense in $X$. Equivalently, Baire spaces can be characterized by any of the following two equivalent conditions:

\begin{enumerate}
\item[(B$_1$)] every comeager set is dense;

\item[(B$_2$)] no non-empty open set is meager.
\end{enumerate}

In particular, comeager sets are non-empty in a non-empty Baire space. The result below is known as the \emph{Baire Category Theorem}, see~\cite[Theorem~8.4]{Kech}.

\begin{thm}\label{BCT}
Every completely metrizable space is Baire. In particular, every Polish space is Baire.
\end{thm}

Summarizing the above discussion, we can regard meager and comeager sets of Polish spaces as ``negligible" and ``generic", respectively.

Recall that an action of a group $G$ on a topological space $X$ is \emph{topologically transitive} if for every non-empty open sets $U,V\subseteq X$, there exists $g\in G$ such that $g(U)\cap V\ne \emptyset$. Our proof of the zero-one law for finitely generated groups will make use of the well-known relation between topological transitivity and existence of dense orbits for homeomorphism groups of Polish spaces. Since there is some ambiguity in the literature on this topic (in fact, the result is false for general topological spaces), we provide a brief proof.

\begin{lem}[Folklore]\label{tt}
Suppose that a group $G$ acts topologically transitively by homeomorphisms on a Polish space $X$. Then the set
$$
\{ x\in X\mid \overline{Gx}=X\}
$$
is comeager in $X$. In particular, there exists a dense orbit.
\end{lem}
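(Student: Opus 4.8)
The plan is to exhibit the comeager set as a countable intersection of dense open sets and then invoke the Baire Category Theorem. First I would fix a countable base $\{U_n\}_{n\in\NN}$ for the topology of $X$, which exists since $X$ is Polish and hence second countable. The natural candidate for the set of points with dense orbit is
$$
\{ x\in X\mid \overline{Gx}=X\} = \bigcap_{n\in \NN} \{ x\in X\mid Gx\cap U_n\ne \emptyset\},
$$
the point being that an orbit $Gx$ is dense exactly when it meets every nonempty basic open set $U_n$. So it suffices to show each
$$
V_n := \{ x\in X\mid Gx\cap U_n\ne \emptyset\}
$$
is dense and open, and then conclude by Theorem \ref{BCT} that $\bigcap_n V_n$ is dense, hence comeager and (in a nonempty Baire space) nonempty, giving the final ``in particular'' clause.

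Next I would verify that each $V_n$ is open. The key observation is that $V_n=\bigcup_{g\in G} g^{-1}(U_n)$: a point $x$ has some translate landing in $U_n$ precisely when $x$ lies in $g^{-1}(U_n)$ for some $g$. Since $G$ acts by homeomorphisms, each $g^{-1}(U_n)$ is open, so $V_n$ is a union of open sets and therefore open.

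The density of $V_n$ is where topological transitivity enters, and I expect this to be the main (though still routine) step. Let $W\subseteq X$ be an arbitrary nonempty open set; I must show $W\cap V_n\ne\emptyset$. Applying topological transitivity to the pair $W,U_n$ yields some $g\in G$ with $g(W)\cap U_n\ne\emptyset$. Pick a point $y$ in this intersection and set $x=g^{-1}(y)\in W$. Then $g(x)=y\in U_n$, so $Gx\cap U_n\ne\emptyset$, i.e. $x\in V_n$, and hence $x\in W\cap V_n$. This shows $V_n$ meets every nonempty open set, so $V_n$ is dense.

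With each $V_n$ dense and open, the Baire Category Theorem (Theorem \ref{BCT}, applicable since $X$ is Polish) gives that $\bigcap_{n}V_n$ is dense in $X$; being a countable intersection of sets with dense interior, it is comeager by definition. This intersection is exactly the set of points with dense orbit. Finally, in a nonempty Polish space the comeager set is nonempty (indeed dense), so there exists $x$ with $\overline{Gx}=X$, proving the ``in particular'' statement. The one subtlety worth flagging is the role of the homeomorphism hypothesis: openness of $V_n$ relies on $g^{-1}$ being continuous (equivalently, $g$ being an open map), which is precisely why the lemma can fail for merely continuous actions on general spaces, as the remark preceding the statement warns.
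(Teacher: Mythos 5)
Your proof is correct and follows essentially the same route as the paper: both exhibit the set of points with dense orbit as the countable intersection $\bigcap_n \bigcup_{g\in G} g^{-1}(U_n)$ over a countable base, use topological transitivity to see each term is dense open, and invoke the Baire category theorem. You merely spell out the density and openness verifications that the paper leaves implicit, so there is nothing to add.
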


\begin{proof}
Since $X$ is separable and metrizable, there exists a countable basis of neighborhoods $\{U_i\}_{i\in \mathbb N}$. Let
$$
C=\bigcap_{i\in \NN}\bigcup_{g\in G} gU_i.
$$
Since the action of $G$ is topologically transitive, $C$ is comeager in $X$. Let $x\in C$ and let $V\subseteq X$ be a non-empty open set. We have $U_i\subseteq V$ for some $i$. Since $x\in \bigcup_{g\in G} gU_i$, there exists $g\in G$ such that $g^{-1}x\in U_i \subseteq V$. Thus $Gx$ is dense in $X$.
\end{proof}

Yet another ingredient of the proof of Theorem \ref{01} is the \emph{topological zero-one law}. The first result of this sort was proved by Oxtoby \cite{Oxt} as early as in 1937. It then underwent a sequence of generalizations culminating in the following.

\begin{thm}[ {\cite[Theorem 8.46]{Kech}}]\label{01top}
Suppose that a group $G$ acts topologically transitively by homeomorphisms on a Baire space $X$. Then every $G$-invariant Borel subset of $X$ is either meager or comeager.
\end{thm}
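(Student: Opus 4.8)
The plan is to prove the topological zero-one law (Theorem \ref{01top}) by reducing it to the two ingredients that have just been established: Lemma \ref{tt} on the existence of a comeager set of dense orbits, and the elementary fact that a comeager set in a Baire space is dense and cannot be avoided by an invariant set that is both non-meager and co-non-meager. The key observation is that a $G$-invariant Borel set interacts rigidly with the comeager set of points having dense orbits.

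\medskip

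First I would fix a $G$-invariant Borel subset $B\subseteq X$ and invoke the \emph{Baire property}: every Borel subset of a Polish (indeed, any) space has the Baire property, so there is an open set $U$ with $B\triangle U$ meager. (Here I am implicitly using that the theorem as applied in the paper concerns Polish $X$, where Borel sets automatically have the Baire property; if one wants the statement for a general Baire space $X$ one must assume the Baire property, which is exactly what \cite[Theorem 8.46]{Kech} does.) The strategy then is to show that this $U$ is forced to be either empty or dense modulo meager sets, which translates into $B$ being meager or comeager respectively.

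\medskip

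Next I would exploit invariance. For each $g\in G$, the homeomorphism $g$ preserves both $B$ and the ideal of meager sets, so $B = gB$ has symmetric difference with $gU$ meager; hence $U$ and $gU$ agree modulo meager sets for every $g$. The heart of the argument is to combine this with topological transitivity: if $U$ were non-empty, pick a basic open set inside it, and use Lemma \ref{tt} to locate a comeager set $D$ of points with dense orbit. A point $x\in D\cap B$ (assuming $B$ non-meager, so that $D\cap B\ne\emptyset$ in the Baire space $X$) has its entire orbit $Gx$ inside $B$ and dense in $X$; since $B$ agrees with $U$ up to a meager set and $U$ is open, density of $Gx\subseteq B$ forces $U$ to be dense, whence $B$ is comeager. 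Conversely, if $B$ is meager we are already done. The cleanest packaging is: either $B$ is meager, or $B$ is non-meager and then $D\cap B$ is non-empty, producing a dense orbit contained in $B$; invariance plus the open approximation $U$ then upgrade "$B$ contains a dense set" to "$U$ is dense," i.e.\ $B$ is comeager.

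\medskip

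The main obstacle, and the place where the hypotheses are genuinely used, is the passage from "$B$ contains a dense orbit" to "$B$ is comeager." Merely containing a dense set does not make a set comeager in general; what rescues the argument is the Baire-property approximation $B\triangle U=$ meager combined with $G$-invariance. I expect the technical care to be concentrated here: one must verify that the meager symmetric differences behave well under the action (using that each $g$ is a homeomorphism, so carries meager sets to meager sets and open sets to open sets) and that the "modulo meager" equivalence class of $U$ is $G$-invariant, so that a dense orbit inside $B$ pins down $U$ up to a meager set as a dense open set. Once this is in hand, conditions (B$_1$) and (B$_2$) characterizing Baire spaces finish the proof, since a non-meager set that agrees with a dense open set modulo meager is comeager.
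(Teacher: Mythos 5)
You have assembled the right ingredients (the Baire property, $G$-invariance of the open approximation modulo meager sets, topological transitivity), but the pivotal step is asserted rather than proved, and the mechanism you propose for it does not work. The implication ``$Gx\subseteq B$ is dense and $B\triangle U$ is meager, hence $U$ is dense'' is not valid: an orbit is a single set that can be swallowed entirely by the meager part $M=B\triangle U$ (each point $gx$ lies in $U\cup M$, and nothing forces any of them into $U$; indeed $gx\in gU$, but $gx$ may well lie in the meager set $gU\setminus U$). So the detour through Lemma \ref{tt} buys you nothing. What is true, and what you actually need, is this: if $B$ is non-meager then $U\ne\emptyset$ (otherwise $B\subseteq M$); for every $g$ the set $U\triangle gU\subseteq M\cup gM$ is meager, because $gB=B$ and $g$ carries open sets to open sets and meager sets to meager sets; and then, for any non-empty open $V$, topological transitivity yields $g$ with $gU\cap V\ne\emptyset$, and since $(gU\cap V)\setminus U\subseteq gU\setminus U$ is meager while a non-empty open set cannot be meager in a Baire space (condition (B$_2$)), it follows that $U\cap V\ne\emptyset$. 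Hence $U$ is dense open and $B\supseteq U\setminus M$ is comeager. The point is that transitivity must be applied to the open set $U$ itself, not to the orbit of a point of $B$.

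For comparison, the paper's proof avoids the case split and the asymmetry altogether: it writes $A=U\triangle M$ and $X\setminus A=V\triangle N$ with $U,V$ open and $M,N$ meager, assumes both $U$ and $V$ are non-empty, uses transitivity to produce a non-empty open set $W=g(U)\cap V$, and checks from $G$-invariance of $A$ that $W\subseteq g(M)\cup N$, i.e., $W$ is a non-empty open meager set --- impossible in a Baire space. Your argument, once repaired as above, is essentially this with $V$ ranging over all non-empty open sets; neither version needs Lemma \ref{tt}, dense orbits, or any countability assumption on $G$.
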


\begin{proof}
The proof is fairly elementary and we provide a sketch for convenience of the reader. Recall that a subset $A$ of any topological space $X$ has the \emph{Baire property} (abbreviated BP) if it can be decomposed as $A=U\vartriangle M$, where $U$ is open and $M$ is meager. It is easy to show that the set of all subsets of $X$ having the BP is a $\sigma$-algebra, see \cite[Proposition 8.22]{Kech}. It follows that every Borel set has the BP.

Suppose that $A$ is a $G$-invariant Borel subset of $X$. Then we have $A=U\vartriangle M$ and $X\setminus A=V\vartriangle N$, where $U$, $V$ are open and $M$, $N$ are meager in $X$. Assume first that both $U$ and $V$ are non-empty. By topological transitivity of the action, there is $g\in G$ such that $W=g(U)\cap V\ne \emptyset$. Using the assumption that $A$ is $G$-invariant, it is easy to show that $W\subseteq g(M)\cup N$. Since $g$ is a homeomorphism, $W$ is open and meager, which is impossible in a Baire space (see (B$_2$) above). This contradiction shows that one of the sets $U$, $V$ must be empty, which means that $A$ is either meager or comeager.
\end{proof}

\paragraph{3.2. Infinitary logic.}
In the infinitary logic $\L_{\omega_1, \omega}$, terms, atomic formulas, and formulas are constructed from the symbols of $\mathcal L$ using the same syntactic rules as in the standard first-order logic with one exception: countably infinite conjunctions and disjunctions of formulas are allowed. For the formal definitions, see \cite[Chapter 1]{Mar16}. Throughout this paper, a ``formula" means an $\mathcal L_{\omega_1, \omega}$-formula and the term ``first-order formula" is used to distinguish finitary formulas, i.e., those formulas which do not involve conjunctions or disjunctions over infinite sets. To help the reader become familiar with $\L_{\omega_1, \omega}$, we consider several examples.

Let $G$ be a group and let $w_1, w_2, ...$ be an enumeration of the set of all words in the alphabet $\{ x_1, x_1^{-1}, \ldots, x_n, x_n^{-1}\}$; we think of the words $w_i$ as terms in the language of groups. Consider the formula
\begin{equation}\label{alpha}
\alpha (x_1, \ldots, x_n) = \forall\, g\, \left(\bigvee_{i\in \NN} g=w_i \right).
\end{equation}
It is easy to see that $\alpha (x_1, \ldots, x_n)$ holds for a subset $\{x_1, \ldots , x_n\}\subseteq G$  if and only if $G=\langle x_1, \ldots, x_n\rangle$. In particular, we have
$$
G\models \bigvee_{n\in \NN} (\exists\, x_1\; \ldots\; \exists\, x_n \; \alpha (x_1, \ldots, x_n))
$$
if and only if $G$ is finitely generated. Note that the property of being finitely generated is not definable in the first-order logic since any theory with infinite models must have uncountable models by the L\"owenheim-Skolem theorem.

Similarly, one can show that all the classes of groups listed below can be defined by $\L_{\omega_1, \omega}$-sentences while none of them is first-order definable.

\begin{enumerate}
\item[(a)] Torsion groups, simple groups, and solvable groups.
\item[(b)] Amenable groups.
\item[(c)] Groups having property (T) of Kazhdan.
\item[(d)] Any countable set of isomorphism classes of groups (e.g., finitely presented groups).
\end{enumerate}

For classes listed in (a) the proof is straightforward. Definability of the class of amenable groups follows from the observation that the F\o lner criterion can be written as an $\L_{\omega_1, \omega}$-sentence. For (c), one has to use a result of Shalom \cite{Sha}: every property (T) group is a quotient of a finitely presented property (T) group. Finally, (d) can be derived from the following proposition, which is a particular case of a more general result of Scott \cite{Sco}.

\begin{thm}[Scott]\label{ScTh}
For every finitely generated group $G$, there exists an $\L_{\omega_1,\omega}$-sentence $\sigma$ with the following property: for any group $H$, we have $H \models \sigma$ if and only if $H\cong G$.
\end{thm}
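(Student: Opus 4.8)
The plan is to construct, for each finitely generated group $G$, a single $\L_{\omega_1,\omega}$-sentence $\sigma$ that "captures" $G$ up to isomorphism. The key observation is that a finitely generated group admits a complete description using only countably much data: a finite generating tuple together with the full multiplication table on the countable set of words in those generators. The sentence $\sigma$ should therefore assert the existence of generators satisfying precisely the relations holding in $G$ and no others.

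First I would fix a generating tuple $(a_1, \ldots, a_n)$ for $G$ and consider the enumeration $w_1, w_2, \ldots$ of all words in the alphabet $\{x_1, x_1^{-1}, \ldots, x_n, x_n^{-1}\}$ introduced before (\ref{alpha}). Each word $w_i$ evaluates to an element $\overline{w_i} \in G$, and this determines two countable families of facts: the equalities $w_i = w_j$ that hold in $G$ (i.e. $\overline{w_i} = \overline{w_j}$) and the inequalities $w_i \ne w_j$ that hold in $G$ (i.e. $\overline{w_i} \ne \overline{w_j}$). I would then define a formula in the free variables $x_1, \ldots, x_n$,
\begin{equation*}
\beta (x_1, \ldots, x_n) = \alpha (x_1, \ldots, x_n) \wedge \bigwedge_{\overline{w_i} = \overline{w_j}} (w_i = w_j) \wedge \bigwedge_{\overline{w_i} \ne \overline{w_j}} (w_i \ne w_j),
\end{equation*}
where $\alpha$ is the sentence (\ref{alpha}) guaranteeing that $x_1, \ldots, x_n$ generate. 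Both conjunctions range over countable index sets, so $\beta$ is a legitimate $\L_{\omega_1,\omega}$-formula. Finally I would set $\sigma = \exists\, x_1 \cdots \exists\, x_n \; \beta(x_1, \ldots, x_n)$.

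The next step is to verify the two directions of the claimed equivalence. If $H \cong G$, then transporting the generating tuple along an isomorphism yields elements of $H$ satisfying $\beta$, so $H \models \sigma$; this direction is immediate. For the converse, suppose $H \models \sigma$, witnessed by elements $b_1, \ldots, b_n \in H$. The clause $\alpha$ forces $H = \langle b_1, \ldots, b_n \rangle$, so the word map $w_i \mapsto \overline{w_i}^H$ is a surjection from words onto $H$. I would then check that $a_j \mapsto b_j$ extends to a well-defined bijective homomorphism: the equality clauses ensure that whenever $\overline{w_i} = \overline{w_j}$ in $G$ we also have $\overline{w_i}^H = \overline{w_j}^H$ in $H$ (well-definedness and the homomorphism property, since group multiplication and inversion are realized by concatenation of words), while the inequality clauses ensure injectivity, as $\overline{w_i} \ne \overline{w_j}$ in $G$ forces $\overline{w_i}^H \ne \overline{w_j}^H$ in $H$. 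Surjectivity follows from $\alpha$. Hence the map is an isomorphism $G \to H$.

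The main obstacle, and the only genuinely delicate point, is confirming that the map $a_j \mapsto b_j$ is well-defined and respects the group operations purely on the strength of the stated equalities, rather than appealing to a presentation. The cleanest way to handle this is to observe that the relation $\{(w_i, w_j) : \overline{w_i} = \overline{w_j}\}$ is exactly the kernel congruence of the evaluation map from the free group on $x_1, \ldots, x_n$ onto $G$, and that the same finite generation clause $\alpha$ together with this congruence pins down the isomorphism type; the inequality clauses are what rule out proper quotients. Since these congruence conditions are closed under the group axioms (which $H$ satisfies), no separate verification of associativity or inverses is needed beyond recording that products and inverses of words are again words in the enumeration. I would remark that this construction is essentially the $\L_{\omega_1,\omega}$ analogue of a Scott sentence for the finitely generated, hence $\omega$-small, structure $G$, which is why the general theorem of Scott specializes to this clean statement.
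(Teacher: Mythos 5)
Your proposal is correct and follows essentially the same route as the paper: existentially quantify a generating tuple, use the formula $\alpha$ from (\ref{alpha}) to force generation, and pin down the isomorphism type by a countable conjunction recording the full equality/inequality pattern of words in the generators. The only cosmetic difference is that you record pairwise (in)equalities $w_i=w_j$ and $w_i\ne w_j$, whereas the paper records which words are equal (respectively, not equal) to $1$; the two are interchangeable since $w_i=w_j$ holds in a group if and only if $w_iw_j^{-1}=1$.
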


\begin{proof}
Let $a_1, \ldots, a_n $ be a generating set of $G$. Let $u_1(x_1, \ldots, x_n), u_2(x_1, \ldots, x_n), \ldots $ (respectively, $v_1(x_1, \ldots, x_n), v_2(x_1, \ldots, x_n), \ldots $) be an enumeration of all words in the alphabet $\{ x_1, x^{-1}, \ldots, x_n^{-1}, x_n\}$, which we think of as terms in the language of groups, such that $u_i (a_1, \ldots,a_n)=1$ (respectively, $v_i(a_1, \ldots , a_n)\ne 1$) in $G$ for all $i$. Then the formula
$$
\exists\, x_1\; \ldots\; \exists\, x_n \; \left(\alpha (x_1, \ldots, x_n) \wedge \left(\bigwedge_{i\in \NN} \big(u_i(x_1, \ldots, x_n)=1  \wedge  (\lnot v_i(x_1, \ldots, x_n)= 1)\big)\right)\right),
$$
where $\alpha (x_1, \ldots, x_n)$ is defined by (\ref{alpha}), has the required property.
\end{proof}

The examples considered above show that the expressive power of $\L_{\omega_1, \omega}$ is much higher than that of the ordinary first order logic. In fact, it is difficult to find an example of a natural group theoretic property that \emph{cannot} be expressed by a $\L_{\omega_1, \omega}$-sentence. One such an example, due to Wesolek and Williams, is the property of being elementary amenable; for more examples and the proof, see \cite{WW}.

We now discuss complexity classes of first-order and $\L_{\omega_1, \omega}$-sentences. Recall that every first-order formula is a equivalent to a formula in the \emph{prenex normal form}, where all the quantifiers are moved to the front. The following classes of first-order formulas will be of particular importance. A formula is called \emph{universal} (respectively, \emph{existential}) if its prenex normal form only involves universal (respectively, existential) quantifiers. A \emph{$\forall\exists$-formula} is a formula whose prenex normal form has a string of universal quantifiers, followed by a string of existential quantifiers, followed by a quantifier-free formula. Similarly, we define $\exists\forall$-formulas and so on.

Unlike in the first-order logic, there is no prenex normal form in $\mathcal L_{\omega_1, \omega}$. However, every formula is equivalent to a formula in the \emph{negation normal form}, where negation only applies to atomic formulas. Formulas in the negation normal form can be divided into \emph{complexity classes} $\Sigma_\alpha$ and $\Pi_\alpha$, where $\alpha $ is a countable ordinal as follows.

\begin{enumerate}
\item[(a)] Every quantifier-free first-order formula is in $\Sigma_0=\Pi_0$.

\item[(b)] For every countable $\alpha>0$, let $\Sigma _\alpha$ denote the smallest set of $\mathcal L_{\omega_1, \omega}$-formulas that contains $\bigcup_{\beta<\alpha} \Pi_\beta$ and is closed under countable disjunctions, finite conjunctions, and adding existential quantifiers.

\item[(c)] Similarly, for every $\alpha >0$, let $\Pi _\alpha$ denote the smallest set of $\mathcal L_{\omega_1, \omega}$-formulas that contains $\bigcup_{\beta<\alpha} \Sigma_\beta$ and is closed under countable conjunctions, finite disjunctions, and adding universal quantifiers.
\end{enumerate}

Here we say that a class of formulas $\Theta$ is \emph{closed under adding universal (respectively, existential) quantifiers} if for every formula $\theta (x)\in \Theta$ with a free variable $x$, we have $\forall\, x\; \theta(x)\in \Theta$ (respectively, $\exists\, x\; \theta(x)\in \Theta$).

In particular, every universal (respectively, existential) first-order formula is in $\Pi_1$ (respectively, $\Sigma_1$), every $\forall\exists$-formula is in the class $\Pi_2$, and so on.

\paragraph{3.3. The space of finitely generated groups.}
Our next goal is to review various approaches to topologizing the space of finitely generated groups. Recall that $\G_n$ denotes the set of equivalence classes of pairs $(G,A)$, where $G$ is a group and $A\subseteq G^n$ is an ordered generating set of $G$ (i.e., an $n$-tuple whose elements generate $G$), modulo the following equivalence relation: $$(G, (a_1, \ldots, a_n))\approx (H, (b_1, \ldots , b_n))$$ if the map $a_1\mapsto b_1,\; \ldots,\; a_n\mapsto b_n$ extends to an isomorphism $G\to H$. To simplify our notation, we write $(G,A)$ for the $\approx$-class of $(G,A)$.

In this paper, we often think of graphs as metric spaces. More precisely, we equip each graph $\Gamma$ with the length metric induced by identifying open edges with the interval $(0,1)$. Thus, for a graph $\Gamma $ and a vertex $v$ of $\Gamma$, the \emph{ball of radius $r$ around $v$} consists of all vertices within the distance at most $r$ from $v$ and all edges $e$ such that at least one end of $e$ is within the distance at most $(r-1)$ from $v$; if $r=0$, the ball consists of $v$ only.

We say that $(G,A), (H,B)\in \G_n$, where $A=(a_1, \ldots, a_n)$ and $B=(b_1, \ldots, b_n)$,  are \emph{$r$-similar} for some $r\in \mathbb N$ and write $(G,A)\approx_r (H,B)$ if there is an isomorphism (in the category of directed graphs) between the balls of radius $r$ around the identity in the Cayley graphs $\Gamma (G,A)$ and $\Gamma (H,B)$ that takes edges labeled by $a_i$ to edges labeled by $b_i$ for all $i$. The topology on $\G_n$ is defined by taking the sets
\begin{equation}\label{UGA}
U_{G,A}(r)=\{ (H,B)\in \G_n\mid (H,B)\approx _r (G,A)\},
\end{equation}
where $(G,A)$ ranges in $\G$ and $r\in \NN$, as the base of neighborhoods. Thus a sequence $\{(G_i, A_i)\}$ converges to $(G,A)$ in $\G_n$ if for every $r\in \mathbb N$, $(G_i,A_i)$ and $(G,A)$ are $r$-similar for all sufficiently large $i$. It is easy to see that $\approx$ is the intersection of the equivalence relations $\approx_r$ and hence the topology on $\G_n$ is well-defined.

\begin{ex}\label{Zex}
It is easy to see that $(\ZZ/m\ZZ, \{ 1\}) \approx_r (\ZZ, \{ 1\})$ for $r=\lfloor m/2\rfloor -1$. In particular, we have $\lim\limits_{m\to\infty} (\ZZ/m\ZZ, \{ 1\}) =(\ZZ, \{ 1\}).$
\end{ex}

The definition of the spaces $\G_n$ is due to Grigorchuk \cite{Gri}. It is customary to stack all spaces $\G_n$ together as follows.   Identifying $(G,(a_1, \ldots, a_n))$ with $(G,(a_1, \ldots, a_n, 1))$ we get an inclusion of $\G_n$ into $\G_{n+1}$ as a clopen subset. The topological union $$\G=\bigcup_{n\in \NN} \G_n$$ is called the \emph{space of marked finitely generated groups}. Thus, a sequence $\{(G_i, A_i)\mid i\in \NN\}$ converges to $(G,A)$ in $\G$ if and only if $\{(G_i, A_i)\mid i\in \NN\}\subseteq \G_n$ for some $n\in \NN$ and $\{(G_i, A_i)\mid i\in \NN\}$ converges to $(G,A)$ in $\G_n$.

It is useful to extend the relation $\approx_r$ from the individual spaces $\G_n$ to the whole $\G$ as follows: we say that $(G,A)\approx _r (H,B)$ for some $(G,A), (H,B)\in \G$ if $(G,A)$ and $(H,B)$ are $r$-similar as elements of some (equivalently, any) $\G_n$ to which they both belong. Further, we can define $\d\colon \G\times \G \to \G$ by letting 
\begin{equation}\label{Eq:defdG}
\d ((G,A), (H,B))=\frac1{1+{\max \{ r\in \NN\cup\{ 0\} \mid (G,A)\approx_r (H,B)\}}}.
\end{equation}
It is easy to see that $\d$ is a metric and the corresponding metric space is a metrization of $\G$.

Grigorchuk \cite{Gri} proved that every $\G_n$ equipped with the restriction of $\d$ is a Hausdorff, second countable, zero-dimensional, compact space (recall that a topological space is \emph{zero-dimensional} if it has a basis of neighborhoods consisting of clopen sets). In particular, each $\G_n$ is Polish. By the definition of $\d$, for any $n\in \NN$, any $(G,A)\in \G_n$, and any $(H,B)\in \G\setminus \G_n$, we have $\d((G,A), (H,B))=1$. Therefore, every Cauchy sequence in $(\G, \d)$ entirely belongs to some $\G_n$. This easily implies that $(\G, \d)$ is complete. Since second countability of $\G$ is obvious, we obtain the following well-known fact.

\begin{prop}[{Grigorchuk}]\label{Gri}
$\G$ is a Hausdorff, second countable, zero-dimensional, $\sigma$-compact, Polish space.
\end{prop}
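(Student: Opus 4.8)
The plan is to transport the whole question to $\mathcal N(F_\infty)\subseteq 2^{F_\infty}$ via the homeomorphism of Proposition \ref{GriCha}, and then to extract all four properties from a single structural fact: $\G$ is the increasing union of the pieces $\Gn$, each of which is compact and --- crucially --- \emph{clopen} in $\G$.

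First, the three ``soft'' properties. Since $2^{F_\infty}$ is the Cantor space (compact, metrizable, second countable, with a basis of clopen sets) and Proposition \ref{GriCha} carries $\Gn$ to $\mathcal N_n(F_\infty)$, which is closed in $2^{F_\infty}$, each $\Gn$ is compact and, as a subspace of $2^{F_\infty}$, is Hausdorff, second countable and zero-dimensional. These three properties globalize to $\G=\bigcup_n\Gn$ once the $\Gn$ are known to be open: Hausdorffness because any two points already lie in a common compact Hausdorff $\mathcal G_N$ that is open in $\G$; second countability by unioning countable bases of the $\Gn$; and a clopen basis for $\G$ is assembled from clopen bases of the open pieces $\Gn$. $\sigma$-compactness is immediate from $\G=\bigcup_n\Gn$ with each $\Gn$ compact.

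The heart of the matter --- and the step I expect to be the real obstacle --- is that each $\Gn$ is \emph{open} in $\G$ (it is then automatically closed, being compact in a Hausdorff space). I would argue directly with Cayley balls: given $(G,A)\in\Gn$ with $A=(a_1,\dots,a_n)$, I claim $U_{G,A}(1)\subseteq\Gn$. Indeed, if $(H,B)\approx_1(G,A)$, compare the two marked groups inside a common $\mathcal G_m$ with $m\ge n$, padding $A$ by trivial generators; then the edges labeled $x_{n+1},\dots,x_m$ at the identity of $\Gamma(G,A)$ are loops, so the label-preserving isomorphism of radius-$1$ balls forces the corresponding edges of $\Gamma(H,B)$ to be loops as well, i.e.\ $b_{n+1}=\dots=b_m=1$; hence $H=\langle b_1,\dots,b_n\rangle$ and $(H,B)\in\Gn$. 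This is exactly the point where the topology must be read carefully: the local convergence of Cayley graphs has to be understood over \emph{all} generators of the ambient $\mathcal G_m$, so that a new nontrivial generator is detected already at radius $1$. This is strictly more than the closedness of $\mathcal N_n(F_\infty)$ recorded before the proposition; without openness one obtains only $\sigma$-compactness, which by itself does not yield even a Baire space, let alone a Polish one (witness $\mathbb Q$, which is $\sigma$-compact, second countable, zero-dimensional and Hausdorff but not Polish).

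Finally, Polishness. Once the $\Gn$ are clopen, $\G$ is locally compact, Hausdorff and second countable, and decomposes as the topological coproduct $\G=\coprod_n(\Gn\setminus\mathcal G_{n-1})$ of clopen pieces. Each piece is open in the compact metrizable space $\Gn$, hence $G_\delta$ in it, hence Polish by Proposition \ref{ps-cl}; and a countable topological sum of Polish spaces is Polish, giving the ``in particular'' of the statement. (Equivalently, a locally compact second countable Hausdorff space embeds as an open, hence $G_\delta$, subset of its metrizable one-point compactification, and one concludes again by Proposition \ref{ps-cl}.) Thus the only genuinely nontrivial ingredient is the openness of the $\Gn$; everything else is bookkeeping with the Cantor space $2^{F_\infty}$ together with the Baire category theorem (Theorem \ref{BCT}).
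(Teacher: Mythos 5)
Your argument is correct and follows the paper's own route on everything the paper actually says: via Proposition \ref{GriCha}, each $\Gn$ is identified with $\mathcal N_n(F_\infty)$, which is closed in the Cantor space $2^{F_\infty}$, and this yields compactness, Hausdorffness, second countability and zero-dimensionality of the pieces and $\sigma$-compactness of the union. The paper stops there (``\dots every $\mathcal N_n(F_\infty)$ is compact and we obtain the following''), so the substantive addition in your write-up is exactly the step you single out: each $\Gn$ is open in $\G$, read off from the fact that a label-preserving isomorphism of radius-$1$ balls must send the loops at the identity labeled by the padding generators to loops. You are right that this cannot be dispensed with --- $\sigma$-compact together with Hausdorff, second countable and zero-dimensional does not imply Polish, as $\mathbb Q$ shows --- and both of your ways of finishing (coproduct of the clopen pieces, or local compactness plus the one-point compactification) are standard and correct. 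One caveat you should record explicitly: your openness claim is genuinely incompatible with reading Proposition \ref{GriCha} as a homeomorphism onto the subspace $\mathcal N(F_\infty)\subseteq 2^{F_\infty}$. Indeed, $\mathcal N_n(F_\infty)$ is not open there: given $N\in\mathcal N_n(F_\infty)$ and a basic neighborhood determined by finitely many words in $x_1,\dots,x_m$, the kernel of the homomorphism $F_\infty\to (F_\infty/N)\times(\ZZ/2\ZZ)$ sending $x_{m+1}$ to the generator of the second factor and agreeing with the quotient map on all other generators lies in that neighborhood and in $\mathcal N(F_\infty)$, but not in $\mathcal N_m(F_\infty)$. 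Consequently each $\mathcal N_n(F_\infty)$ is nowhere dense in the subspace topology, $\mathcal N(F_\infty)$ is meager in itself, and with that topology the space would fail to be Baire, let alone Polish. So the topology for which the Proposition holds is the one you implicitly use --- the direct-limit topology of the union, in which the $\Gn$ are clopen --- and the identification with $\mathcal N(F_\infty)$ should be invoked only one $\Gn$ at a time. Your proof is the right one; it simply makes visible a point that the paper's two-line justification leaves unexamined.
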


Recall that a topological space is \emph{$\sigma$-compact} if it is a countable union of compact subspaces.

\begin{conv}
We say that a marked group $(G,A)\in \G$ has some group theoretic property, if so does $G$.
\end{conv}

In some cases, it is useful to think of the spaces $\G_n$ in a different way. Let $F_\infty=F(x_1, x_2, \ldots)$ denote the free group with countably infinite basis $\{x_1,x_2, \ldots\}$ and let
$$
\mathcal N(F_\infty) =\{ N\lhd F_\infty \mid x_i\in N \; {\rm for \; all\; but\; finitely\; many\;} i\}.
$$
Recall that $2^{F_\infty}$ denotes the space of all subsets of $F_\infty$ with the product topology. We think of $\mathcal N(F_\infty)$ as a subset of $2^{F_\infty}$ and endow it with the induced topology. A similar space (namely, the space of all closed subgroups of a locally compact topological group) was studied by Chabauty \cite{Ch}; for this reason, the product topology on $\mathcal N(F_\infty)$ is sometimes called the \emph{Chabauty topology}.

Every $(G,A)\in \G$ can be naturally identified with an element of $\mathcal N(F_\infty)$. Namely, $(G,(a_1, \ldots, a_n))$ corresponds to the kernel of the natural homomorphism $\e_{G,A}\colon F_\infty\to G$ such that
\begin{equation}\label{eGA}
\e_{G,A} (x_i)=\left\{\begin{array}{cl}
                 a_i, & {\rm for}\; i=1, \ldots, n, \\
                 1, & {\rm if }\; i>n.
               \end{array}\right.
\end{equation}
It is easy to prove the following.

\begin{prop}\label{GriCha}
The map $(G,A)\mapsto Ker\, \e_{G,A}$ defines a bijective continuous map $f\colon \mathcal G\to \mathcal N(F_\infty)$. The restriction of this map to $\G_n$ yields a homeomorphism $\G_n$ to $\mathcal N_n(F_\infty)$, where $\mathcal N_n(F_\infty)$ consists of all $N\lhd F_\infty$ such that $x_i\in N$ for all $i>n$.
\end{prop}

It is worth noting that the map $f\mathcal G\to \mathcal N(F_\infty)$ defined above is not a homeomorphism, as was erroneously claimed in the previous version of this paper. Indeed, let $G=\ZZ=\langle x\rangle$ and let $A_i=(x, 1, \ldots , 1, x)\in G^i$, $i\ge 3$. The sequence of marked groups $(G, A_i)$ does not converge in $\mathcal G$ while their images in $\mathcal N(F_\infty)$ converge to the image of $(G, \{ x\})$. Moreover, it is not difficult to show $\mathcal N(F_\infty)$ is not a Polish space (although all spaces $\mathcal N_n(F_\infty)$ are).

We consider a couple of standard examples. The proofs are straightforward using either the geometric definition of $\G$ or one of the alternative spaces $\mathcal N_n(F_\infty)$.

\begin{ex} \label{exconv} Let $G$ be a finitely generated group, $A$ a finite generating set of $G$, $N$ a normal subgroup of $G$.
\begin{enumerate}
\item[(a)] Suppose that $N_1\ge N_2\ge \cdots $ is a sequence of normal subgroups of $G$ such that $N=\bigcap_{i\in \NN} N_i$. Let $Q_i=G/N_i$, $Q=G/N$, and let $X_i$ (respectively, $X$) be the natural image of $A$ in $Q_i$ (respectively, $Q$). Then $\lim_{i\to \infty} (Q_i, X_i)=(Q,X)$ in $\G$. This can be seen as a generalization of Example \ref{Zex}.
\item[(b)] The same claim holds for any increasing sequence of normal subgroups $N_1\le N_2\le \ldots$ such that $N=\bigcup_{i\in \NN} N_i$. In particular, every marked group in $\G$ is a limit of finitely presented marked groups.
\end{enumerate}
\end{ex}

The following useful result also goes back to \cite{Gri}.
\begin{prop}[Grigorchuk,  \cite{Gri}]\label{Quot}
Let $(G,A)\in \G$. If $G$ is finitely presented, then there is a neighborhood $U$ of $(G,A)$ in $\G$ such that for every $(H,B)\in U$, the group $H$ is a quotient of $G$.
\end{prop}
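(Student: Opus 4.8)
The plan is to exploit the fact that a finite presentation of $G$ is ``visible'' inside a Cayley ball of bounded radius, so that the relation $\approx_r$ already detects every defining relation of $G$ once $r$ is large enough. Since finite presentability does not depend on the chosen finite generating set, I may fix a finite presentation $G=\langle a_1,\dots,a_n \mid r_1,\dots,r_m\rangle$ in which $A=(a_1,\dots,a_n)$ is exactly the marking and each relator $r_j$ is a word in $a_1^{\pm1},\dots,a_n^{\pm1}$. I then set $r=\max\{1,|r_1|,\dots,|r_m|\}$ and take $U=U_{G,A}(r)$, the basic neighborhood from (\ref{UGA}). The goal becomes showing that every $(H,B)\in U$ is a quotient of $G$.

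The key observation is that a word $w$ of length at most $r$, read as a path in $\Gamma (G,A)$ starting at the identity, stays inside the ball of radius $r$ about the identity, and that $w=_G 1$ precisely when this path is closed. Given $(H,B)\approx_r(G,A)$, the similarity supplies a label-preserving isomorphism between the $r$-balls about the identities of $\Gamma (G,A)$ and $\Gamma (H,B)$. Each relator $r_j$ has length at most $r$ and is closed in $\Gamma (G,A)$; transporting it across this isomorphism yields a path in $\Gamma (H,B)$ reading the same word $r_j$ and again closed, whence $r_j=_H 1$ for every $j$ (here I use that a path reading a word $w$ in a Cayley graph is closed exactly when $w$ evaluates to the identity, regardless of its basepoint). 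Consequently the assignment $a_i\mapsto b_i$ respects all of $r_1,\dots,r_m$ and extends to a homomorphism $\psi\colon G\to H$.

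It remains to see that $\psi$ is surjective, and this is where the only genuine bookkeeping occurs. A priori $(H,B)$ may lie in some $\G_{n'}$ with $n'>n$, so $B=(b_1,\dots,b_{n'})$, and I must rule out ``extra'' generators. Comparing $(H,B)$ with $(G,A)$ inside a common $\G_{n''}$ pads $A$ with trivial entries, and for each $i>n$ the corresponding edge in $\Gamma (G,A)$ is a loop at every vertex; since $r\ge 1$, the isomorphism of $1$-balls must carry this loop to a loop in $\Gamma (H,B)$, forcing $b_i=1_H$. Hence $H=\langle b_1,\dots,b_n\rangle=\psi(G)$, so $\psi$ is onto and $H\cong G/\ker\psi$ is a quotient of $G$.

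I expect the genuine mathematical content to sit in the second step, the local detectability of the relations, while the principal technical nuisance is the last step: reconciling the direct-limit structure of $\G=\bigcup_n\G_n$ with the possibility that $H$ carries spurious generators. The radius condition $r\ge 1$ is exactly what makes those generators collapse. Everything else (that length-$\le r$ paths remain in the $r$-ball, and that the pointed isomorphism may be taken to fix the identity) is routine.
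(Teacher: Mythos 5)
Your proof is correct and follows essentially the same route as the paper's: take $r=\max\{1,\max_j|r_j|\}$ and observe that $r$-similarity forces all relators of $G$ to hold in $H$ and all padding generators of $B$ to be trivial. You have merely spelled out the details (transport of closed relator loops, collapse of spurious generators via the $1$-ball) that the paper leaves implicit.
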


\begin{proof}
Let $G=\langle A\mid \mathcal R\rangle $ be a finite presentation and let $r$ denote the maximum of the lengths of relations in $\mathcal R$. Then every finitely generated group $H$ satisfying $(H,B)\approx_{\max\{ 1, r\}} (G,A)$ is a quotient of $G$.
\end{proof}

One reason for working with $\G$ instead of individual spaces $\G_n$ is that the isomorphism classes in $\G$ are the orbits of a natural group action. More precisely, let $\Aut $ denote the group of finitary automorphisms of $F_\infty$. That is, an automorphism $\alpha\in Aut(F_\infty)$ belongs to $\Aut $ if and only if $\alpha(x_i)=x_i$ for all but finitely many $i$. The group $\Aut $ acts on $\mathcal N(F_\infty)$, which gives rise to an action on $\G$ via the one-to-one correspondence between $\mathcal N(F_\infty)$ and $\G$ established in Proposition \ref{GriCha}. Note that, for every $\alpha \in \Aut$ and every $n\in \NN$, there is $k\in \NN$ such that $\alpha (\mathcal N_n(F_\infty))\subseteq \mathcal N_k(F_\infty)$. This and the fact that the restriction of the map $f$ from Proposition \ref{GriCha} to each $\G_k$ yields a homeomorphism $\G_k\to \mathcal N_k(F_\infty)$, implies that the induced action of $\Aut $ on $\G$ is continuous. Alternatively, one can define the same action of $\Aut$ on $\G$ using elementary Nielsen transformations and verify its continuity directly.

The following observation is due to Thomas \cite{Tho09}.

\begin{lem}[Thomas]\label{Aut}
$\Aut $ acts on $\G$ by homeomorphisms. Isomorphism classes in $\G$ coincide with orbits of the $\Aut $-action.
\end{lem}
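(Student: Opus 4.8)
The plan is to carry out everything in the homeomorphic model $\mathcal N(F_\infty)$ supplied by Proposition \ref{GriCha}, where $\Aut$ acts by $\alpha\cdot N=\alpha(N)$, and to split the statement into three pieces: that this action is well defined and by homeomorphisms, that $\Aut$-orbits are contained in isomorphism classes, and the converse inclusion. For the first piece, I would check $\alpha(N)\in\mathcal N(F_\infty)$ whenever $N\in\mathcal N(F_\infty)$: since $\alpha$ is finitary, $\alpha^{-1}(x_i)=x_i$ for all but finitely many $i$, and combined with $x_i\in N$ for all but finitely many $i$ this gives $x_i\in\alpha(N)$ for all but finitely many $i$. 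For continuity, the topology on $\mathcal N(F_\infty)\subseteq 2^{F_\infty}$ is generated by the clopen sets $\{N : w\in N\}$ and their complements, $w\in F_\infty$, and the preimage of $\{N : w\in N\}$ under $N\mapsto\alpha(N)$ is $\{N : \alpha^{-1}(w)\in N\}$, again basic clopen; hence each $\alpha$ acts continuously, and since $\alpha^{-1}$ induces the inverse map, by a homeomorphism. Transporting along Proposition \ref{GriCha} gives the first assertion.

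The inclusion ``orbits $\subseteq$ isomorphism classes'' is immediate: if $M=\alpha(N)$ then $\alpha$ descends to an isomorphism $F_\infty/N\to F_\infty/M$, and as the corresponding marked groups satisfy $F_\infty/N\cong G$ and $F_\infty/M\cong H$, we get $G\cong H$. The substantive direction is the converse. Given $(G,A)$ and $(H,B)$ with $G\cong H$ via $\phi$, I would first reduce to a single group: the tuple $C=(\phi^{-1}(b_1),\dots,\phi^{-1}(b_m))$ generates $G$, and the map $\phi^{-1}(b_j)\mapsto b_j$ extends to $\phi$, so $(G,C)$ and $(H,B)$ are the same point of $\G$ and hence determine the same subgroup $Ker\,\e_{H,B}=Ker\,\e_{G,C}$. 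It therefore suffices to show that two markings $A=(a_1,\dots,a_n)$ and $C=(c_1,\dots,c_m)$ of a single group $G$ yield normal subgroups in one $\Aut$-orbit.

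The key technical lemma I would isolate is: if $E=(g_1,\dots,g_p)$ generates $G$ and $E'=(g_1,\dots,g_p,h_1,\dots,h_q)$ with each $h_s=w_s(g_1,\dots,g_p)$ a word in $E$, then $Ker\,\e_{G,E}$ and $Ker\,\e_{G,E'}$ lie in one orbit. To prove it I would exhibit the explicit finitary automorphism $\alpha$ fixing every $x_i$ except $x_{p+1},\dots,x_{p+q}$, where $\alpha(x_{p+s})=x_{p+s}\,w_s(x_1,\dots,x_p)^{-1}$; this is a finite product of commuting transvections, each modifying a single generator by a word in the disjoint set $x_1,\dots,x_p$, so it lies in $\Aut$, and one verifies $\e_{G,E'}\circ\alpha=\e_{G,E}$, whence $\alpha(Ker\,\e_{G,E})=Ker\,\e_{G,E'}$. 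Applying this to $E=A$, $E'=(a_1,\dots,a_n,c_1,\dots,c_m)$ (each $c_j$ a word in $A$) and to $E=C$, $E'=(c_1,\dots,c_m,a_1,\dots,a_n)$ (each $a_i$ a word in $C$), and observing that the two combined tuples differ only by a finite reordering of the generators, realized by a finitary permutation automorphism, I would conclude that $Ker\,\e_{G,A}$ and $Ker\,\e_{G,C}$ both lie in the orbit of the point determined by the combined generating set, and so in one orbit.

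The main obstacle is precisely the construction of the finitary automorphism in this lemma: the naive ``substitution'' endomorphism sending each $x_i$ to a word realizing the change of generators need not be an automorphism at all. The crux is to realize the change of coordinates instead through transvections that exploit the infinitely many spare generators of $F_\infty$ while remaining inside the finitary automorphism group, after which the reduction steps are routine.
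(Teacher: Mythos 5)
Your proof is correct and follows essentially the same route as the paper: the paper also treats the first claim as a routine verification in $\mathcal N(F_\infty)$ and reduces the second to the fact that the padded tuples $(a_1,\ldots,a_m,1,\ldots,1)$ and $(b_1,\ldots,b_n,1,\ldots,1)$ are related by finitely many Nielsen transformations. The only difference is that the paper cites this stable Nielsen-equivalence fact while you prove it explicitly via the transvections appending each generating tuple as words in the other, which is the standard argument behind that fact.
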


\begin{proof}
The proof of the first claim is straightforward. The second claim follows from the fact that for every $(G, (a_1, \ldots, a_m))\in \G$, an $n$-tuple $(b_1, \ldots, b_n)\subseteq G^n$ generates $G$ if and only if the $(m+n)$-tuple $(a_1, \ldots, a_m, 1, \ldots, 1)$ can be transformed to the $(m+n)$-tuple $(b_1, \ldots, b_n, 1, \ldots, 1)$ by a finite sequence of elementary Nielsen transformations.
\end{proof}

Finally we note that the space $\G$ can be identified with a subspace of the space of countably infinite structures traditionally studied in model theory. More precisely, let $\mathcal R$ be a first-order language whose signature consists of countably many predicates $R_1, R_2, \ldots $. We denote by $a(R_i)$ the arity of $R_i$ and consider the set
$$
\mathcal X(\mathcal R) =2^{\mathbb N ^{a(R_1)}}\times 2^{\NN^{a(R_2)}} \times \cdots ,
$$
endowed with the product topology. Every  $M\in \mathcal X(\mathcal R)$ defines an $\mathcal R$-structure whose universe is $\NN$ and the interpretation of $R_i$ is the $a_i$-ary relation on $\NN$ given by the $i$-th component of $M$. The space $\mathcal X(\mathcal R)$ is called the \emph{space of countable $\mathcal R$-structures} \cite[Section 3.1]{Mar16}.

The requirement that the signature of $\mathcal R$ consists of predicates only is not really restrictive. Indeed, every structure can be turned into a relational structure by replacing all operations with their graphs. Moreover, for every marked group $(G,A)$ (including finite groups), we can construct the corresponding relational structure $S(G,A)$ as follows.

Let $\mathcal R$ denote the first order language with signature $\{ P_1,P_2,P_3\}$, where $P_i$ is an $i$-ary predicate for each $i=1,2,3$.
We fix an enumeration $F_\infty=\{f_1, f_2, \ldots\}$ and identify $\NN$ with $F_\infty$ via the map $i\mapsto f_i$. Given $(G,A)\in \G$, the universe of the structure $S(G,A)$ is $F_\infty=\NN$ and the predicate symbols are interpreted as follows. For all $u,v,w\in F_\infty$, we have:
$$
P_1(w)=True {\rm \;\;if \; and\; only\; if\;\;} \e_{G,A}(w)=1,
$$
$$
P_2(u,v)=True {\rm \;\;if \; and\; only\; if\;\;}  \e_{G,A}(v)=\e_{G,A}(u^{-1}),
$$
$$
P_3(u,v,w)=True {\rm \;\;if \; and\; only\; if\;\;} \e_{G,A}(uv)=\e_{G,A}(w),
$$
where the homomorphism $\e_{G,A}\colon F_\infty \to G$ is defined by (\ref{eGA}).

\begin{prop}\label{GXR}
In the notation introduced above, the following statements hold.
\begin{enumerate}
\item[(a)] The map $(G,A)\to S(G,A)$ defines a continuous embedding $\G \to \mathcal X(\mathcal R)$.

\item[(b)] There is a rewriting procedure $\sigma_{\L}\mapsto \sigma_{\mathcal R}$ that transforms $\L_{\omega_1, \omega}$-sentences to sentences in the corresponding infinitary version of $\mathcal R$ such that for every finitely generated group $G$, we have $G\models \sigma_{\L}$ if and only if $S(G,A)\models \sigma_{\mathcal R}$ for all finite generating sets $A$ of $G$.
\end{enumerate}
\end{prop}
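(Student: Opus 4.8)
The plan is to handle (a) by reducing everything to the homeomorphism $(G,A)\mapsto Ker\,\e_{G,A}$ of Proposition \ref{GriCha}, and to handle (b) by interpreting the group $G$ inside the relational structure $S(G,A)$ and then translating formulas by induction on their construction.

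For part (a), the key observation is that each of the three predicate coordinates of $S(G,A)$ is determined by a single membership question in the kernel $Ker\,\e_{G,A}$. Indeed, unwinding the definitions and using that $\e_{G,A}$ is a homomorphism, one checks that $P_1(w)$ holds iff $w\in Ker\,\e_{G,A}$, that $P_2(u,v)$ holds iff $uv\in Ker\,\e_{G,A}$, and that $P_3(u,v,w)$ holds iff $uvw^{-1}\in Ker\,\e_{G,A}$. Since a map into the product $\mathcal X(\mathcal R)$ is continuous iff each coordinate (i.e. the truth value at each fixed tuple) is continuous, and since $(G,A)\mapsto Ker\,\e_{G,A}\in 2^{F_\infty}$ is continuous by Proposition \ref{GriCha}, continuity of $(G,A)\mapsto S(G,A)$ follows. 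To upgrade continuity to an embedding, I would let $\pi_1\colon \mathcal X(\mathcal R)\to 2^{F_\infty}$ be the projection onto the $P_1$-coordinate and note that $\pi_1\circ S$ is exactly the homeomorphism of Proposition \ref{GriCha} onto $\mathcal N(F_\infty)$. Thus $S$ is injective, and $(\pi_1\circ S)^{-1}\circ\pi_1$ restricts to a continuous inverse of $S$ on its image, so $S$ is a homeomorphism onto $S(\G)$.

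For part (b), I would realize $G$ as a quotient of the universe $F_\infty$ of $S(G,A)$. Because $A$ generates $G$, the map $\e_{G,A}$ is a surjective homomorphism, so $G\cong F_\infty/{\sim}$, where $u\sim v$ iff $\e_{G,A}(u)=\e_{G,A}(v)$; this relation is $\mathcal R$-definable, e.g. by $E(u,v):=\exists v'\,\exists w\,(P_2(v,v')\wedge P_3(u,v',w)\wedge P_1(w))$, which expresses $\e_{G,A}(u)\e_{G,A}(v)^{-1}=1$. The identity, inverse, and multiplication of $G$ are read off from $P_1$, $P_2$, $P_3$ directly. Using this interpretation I would define the rewriting $\sigma_\L\mapsto \sigma_{\mathcal R}$ by induction: an atomic group equation $s=t$ between group words is translated by introducing finitely many existentially quantified auxiliary variables that name the values of the successive subwords of $s$ and of $t$ (using $P_2$ for each inverse and $P_3$ for each product) and then asserting $E$ between the two resulting names; Boolean connectives and countable conjunctions/disjunctions are translated componentwise in the corresponding infinitary version of $\mathcal R$; and a group quantifier $\exists x$ (resp. $\forall x$) becomes the $\mathcal R$-quantifier $\exists x$ (resp. $\forall x$) over $F_\infty$. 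The statement to prove by induction is that for every formula $\phi(\bar x)$ and all $\bar u\in F_\infty$ with $\e_{G,A}(\bar u)=\bar g$, we have $G\models \phi(\bar g)$ iff $S(G,A)\models \phi^\ast(\bar u)$; specializing to sentences gives the proposition, and since nothing in the construction depends on $A$ beyond surjectivity of $\e_{G,A}$, it holds for every finite generating set $A$.

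The routine but essential points are that $E$ is an equivalence relation whose $\mathcal R$-classes are the elements of $G$ and that each translated formula $\phi^\ast$ is invariant under replacing a free variable by an $E$-equivalent one; these guarantee that the quantifier clauses of the induction go through, since surjectivity of $\e_{G,A}$ makes $\exists x\in G$ correspond to $\exists u\in F_\infty$, and $E$-invariance makes $\forall$ and negation behave correctly. The main thing to watch is that the translation stays inside $\L_{\omega_1,\omega}$: each atomic equation contributes only finitely many auxiliary quantifiers, so no infinite quantifier string is ever created, while countable conjunctions and disjunctions are preserved verbatim. I expect this bookkeeping of terms, rather than any genuine difficulty, to be the main obstacle.
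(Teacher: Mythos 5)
Your proof is correct. The paper explicitly leaves the proof of this proposition to the reader as ``straightforward,'' and your argument --- reducing part (a) to the homeomorphism $(G,A)\mapsto Ker\,\e_{G,A}$ of Proposition \ref{GriCha} via the observation that each predicate coordinate of $S(G,A)$ is a kernel-membership condition, and proving part (b) by the standard interpretation of $G$ as the quotient of $F_\infty$ by the definable relation $E$ together with an induction on formulas --- is exactly the intended one, with all the relevant points (finitely many auxiliary quantifiers per atomic formula, surjectivity of $\e_{G,A}$ for the quantifier clauses, $E$-invariance) correctly identified.
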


This result, together with the classical L\'opez-Escobar theorem \cite{LE}, can be used to derive Proposition \ref{Borel}, which is crucial for the proof of the zero-one law for finitely generated groups. We will use an alternative approach and give a self-contained proof of Proposition \ref{Borel}. Thus Proposition \ref{GXR} will not be used in our paper and so we leave the (straightforward) proof to the reader.

\section{Hyperbolic groups and their generalizations}

\paragraph{4.1. Hyperbolic groups.}
Throughout this paper, we often think of graphs as metric spaces. Given a connected graph $\Gamma$, we identify every edge of $\Gamma$ with $[0,1]$ and define the distance between two points $a,b\in \Gamma$ to be the length of the shortest path in $\Gamma$ connecting $a$ to $b$. Note that this distance is defined for all points in $\Gamma$, not necessarily vertices.

We begin by recalling Gromov's definition of hyperbolicity of a metric space \cite{Gro}. Recall that a metric space $S$ with a distance function $\d$ is called \emph{geodesic}, if every two points $a,b\in S$ can be connected by a path $p$ of length $\ell(p)=\d(a,b)$.

\begin{defn}
Let $\delta $ be a non-negative constant. A metric space $S$ is \emph{$\delta$-hyperbolic} if it is geodesic and for any geodesic triangle $\Delta $ in $S$, every side of $\Delta $ is contained in the union of the closed $\delta$-neighborhoods of the other two sides. A metric space is called \emph{hyperbolic} if it is $\delta$-hyperbolic for some $\delta \ge 0$.
\end{defn}
We list some classical examples.

\begin{ex}\label{exhs}
\begin{enumerate}
\item[(a)] Every bounded geodesic space $S$ is hyperbolic with $\delta ={\rm diam} (S)$.
\item[(b)] Every tree is hyperbolic with $\delta=0$.
\item[(c)] $\mathbb H^n$ is hyperbolic for every $n\in \mathbb N$ while $\mathbb R^n$ is not hyperbolic for $n\ge 2$.
\item[(d)] In the class of geodesic metric spaces, hyperbolicity is preserved by the quasi-isometry relation, see \cite[Chapter III.H, Theorem 1.9]{BH} for details.
\end{enumerate}
\end{ex}

\begin{defn}
A group $G$ is \emph{hyperbolic} if it is finitely generated and for some (equivalently, any) generating set $A$, the Cayley graph $\Gamma (G,A)$ is hyperbolic. A hyperbolic group is called \emph{elementary} if it has a cyclic subgroup of finite index and \emph{non-elementary} otherwise.
\end{defn}

Example \ref{exhs} can be translated into the following.

\begin{ex}\label{exhyp}
\begin{enumerate}
\item[(a)] Every finite group is hyperbolic.
\item[(b)] Every finitely generated free group is hyperbolic.
\item[(c)] If $M$ is a closed hypebolic manifold, then $\pi_1(M)$ is hyperbolic.
\item[(d)] The class of hyperbolic groups is closed under passing to finite extensions, subgroups of finite index, extensions with finite kernel, and quotients by finite normal subgroups.
\end{enumerate}
\end{ex}

An important class of examples consists of small cancellation groups. Recall that a word $w$ is the alphabet $X\cup X^{-1}=\{x_1, x_1^{-1}, x_2, x_2^{-1}, \ldots \} $ is \emph{reduced} if it contains no subwords of the form $x_ix_i^{-1}$ and $x_i^{-1}x_i$ and \emph{cyclically reduced} if every cyclic shift of $w$ is reduced. When talking about group presentations, we always assume that relations are cyclically reduced. A group presentation
\begin{equation}\label{eq-pres}
G=\langle X \mid \mathcal R\rangle.
\end{equation}
is said to be \emph{symmetrized} if for every word $R\in \mathcal R$, all cyclic shifts of $R^{\pm 1}$ belong to $\mathcal R$. If the presentation (\ref{eq-pres}) is not symmetrized, by its \emph{symmetrization} we mean the presentation obtained by adding all cyclic shifts of $R^{\pm 1}$ for all $R\in \mathcal R$ to the set of relations. A symmetrized presentation (\ref{eq-pres}) satisfies the $C^\prime(\lambda)$ \emph{small cancellation condition} for some $\lambda \in [0, 1]$ if any common initial subword $U$ of two distinct words $R,S\in \mathcal R$ satisfies
$$
\| U\| < \lambda \min\{ \| R\|, \, \| S\|\},
$$
where $\| \cdot \| $ denotes the number of letters in the corresponding word. A non-symmetrized group presentation satisfies $C^\prime(\lambda)$ if so does its symmetrization.

The standard example is the presentation of the fundamental group of a closed surface of genus $g$,
$$
\langle a_1, b_1, \ldots, a_n, b_n\mid a_1b_1a_1^{-1}b_1^{-1}\cdots a_nb_na_n^{-1}b_n^{-1}\rangle,
$$
which satisfies $C^\prime(\lambda)$ for every $\lambda > \frac{1}{4n}$.

The result below was originally proved by Greendlinger \cite{Green} using combinatorial arguments. For a contemporary geometric proof see \cite[Section 12]{Ols-book}.

\begin{lem}[Greendlinger Lemma]\label{Green}
Suppose that a group $G$ has a symmetrized presentation (\ref{eq-pres}) satisfying $C^\prime (1/6)$. Then every non-empty reduced word in the alphabet $X\cup X^{-1}$ that represents $1$ in $G$ contains a subword $U$ which is also a subword of some $R\in \mathcal R$ such that $\| U\| > \|R\|/2$.
\end{lem}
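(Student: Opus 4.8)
The plan is to recast the statement in the language of van Kampen diagrams and then run a combinatorial curvature argument to locate a suitable boundary cell.

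First I would invoke van Kampen's lemma. We may assume $w$ is cyclically reduced: writing $w=uw_0u^{-1}$ with $w_0$ cyclically reduced, the word $w_0$ also represents $1$ in $G$ and every subword of $w_0$ is a subword of $w$, so it suffices to treat $w_0$. Since $w$ is non-empty and reduced, it represents a non-trivial element of the free group on $X$; as it represents $1$ in $G$, there is a van Kampen diagram $\Delta$ over the presentation (\ref{eq-pres}) whose boundary cycle reads $w$. Among all such diagrams I would choose one that is \emph{reduced} (no two distinct $2$-cells meeting along an edge carry mutually inverse boundary labels) with the fewest possible $2$-cells. Because $w\neq 1$ in the free group, $\Delta$ is not a tree and hence has at least one $2$-cell. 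Reducedness together with minimality guarantees that every arc shared by two cells, or by a cell with itself, is labeled by a \emph{piece}, i.e. a common subword of two elements of $\mathcal R$; by the $C^\prime(1/6)$ hypothesis each such piece has length strictly less than $\tfrac16$ of the length of each relator containing it.

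The heart of the argument is the combinatorial claim that \emph{every reduced diagram over a $C^\prime(1/6)$ presentation with at least one $2$-cell contains a $2$-cell $\Pi$ possessing a subpath $s\subseteq \partial\Pi$ lying entirely on $\partial\Delta$ with $\ell(s) > \tfrac12\,\ell(\partial\Pi)$}. To establish this I would use combinatorial Gauss--Bonnet. Assign an angle to each corner of each $2$-cell so that the corner angles around every interior vertex sum to $2\pi$; since $\Delta$ is simply connected, its total combinatorial curvature is $2\pi$. The $C^\prime(1/6)$ condition makes every interior arc short relative to the relators on either side, so each interior cell is bounded by more than six maximal pieces; combined with the fact that each interior vertex has degree at least $3$, a counting argument shows that any cell all of whose edges are interior has non-positive curvature. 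As the total curvature equals $2\pi>0$, positive curvature must concentrate at cells meeting $\partial\Delta$, and sharpening the count produces a cell $\Pi$ whose exterior arc exceeds half of $\partial\Pi$. This is precisely the step where the numerical value $1/6$ is consumed.

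With the claim in hand, the extraction is immediate. The label of $\partial\Pi$ is a cyclic shift of $R^{\pm1}$ for some $R\in\mathcal R$, and since $\mathcal R$ is symmetrized this label itself lies in $\mathcal R$ and has length $\|R\|=\ell(\partial\Pi)$. The subpath $s\subseteq\partial\Pi\cap\partial\Delta$ reads a subword $U$ of this relator with $\|U\|=\ell(s)>\tfrac12\,\ell(\partial\Pi)=\|R\|/2$; because $s$ lies on $\partial\Delta$, the same word $U$ occurs as a subword of $w$. Thus $U$ satisfies all the requirements.

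I expect the main obstacle to be the Gauss--Bonnet/counting step: verifying that $1/6$ is exactly what forces interior cells to be non-positively curved, and then upgrading ``some boundary cell carries positive curvature'' to the quantitative conclusion ``some boundary cell has more than half its perimeter on $\partial\Delta$''. Equally delicate is the bookkeeping for degenerate features of $\Delta$ -- interior vertices of degree $2$, boundary spurs, and cells touching $\partial\Delta$ along several disjoint arcs -- which must either be removed by a preliminary normalization of the diagram or carefully absorbed into the curvature count.
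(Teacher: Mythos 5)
The paper does not actually prove this lemma: it is quoted from the literature, with a pointer to Greendlinger's original combinatorial argument and to \cite[Section 12]{Ols-book} for ``a contemporary geometric proof.'' Your outline is precisely that geometric proof (van Kampen diagrams plus a combinatorial Gauss--Bonnet count), so in spirit you are following the route the paper endorses. The reduction to cyclically reduced words, the choice of a reduced diagram with a minimal number of cells, and the observation that interior arcs are pieces of length $<\frac16$ of the adjacent relators are all correct. The decisive inequality -- that the curvature count forces a boundary cell with a long exterior arc -- is asserted rather than derived, but you honestly flag it as the crux, so I treat the sketch as a plan whose hard step is deferred rather than wrong.

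There is, however, one genuine gap beyond the deferred counting. Your claim produces \emph{one} cell $\Pi$ with a consecutive exterior arc $s\subseteq\partial\Delta$ of length $>\frac12\ell(\partial\Pi)$. The label of $s$ is a subword of the boundary \emph{cycle} of $\Delta$, i.e.\ of the cyclic word $w_0$; if $s$ passes through the basepoint, that label need not be a genuine subword of $w_0$ (hence of $w$), which is what the statement requires. The standard repair is to prove the stronger form of the diagram lemma: either $\Delta$ consists of a single $2$-cell, in which case $w_0$ is a cyclic shift of some $R^{\pm1}$ and hence, the presentation being symmetrized, $w_0\in\mathcal R$ and one takes $U=w_0$; or $\Delta$ has at least two cells, in which case the curvature count yields \emph{at least two} Greendlinger cells with disjoint exterior arcs, at least one of which avoids the basepoint. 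A related point you flag but do not resolve: a boundary cell may meet $\partial\Delta$ in several disjoint arcs, so knowing that the total exterior length exceeds half the perimeter does not by itself produce a single consecutive subword $U$; this is why the standard treatment first decomposes $\Delta$ along cut vertices into extremal discs (or restricts to ``simple'' boundary regions). Neither issue is fatal -- both are handled by the classical arguments in Lyndon--Schupp and in \cite[Section 12]{Ols-book} -- but as written your argument proves only the cyclic-word version of the conclusion.
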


It is known that a group is hyperbolic if and only if it has a finite presentation (\ref{eq-pres}) satisfying the conclusion of the Greendlinger lemma \cite[Chapter III.$\Gamma$, Theorem 2.6]{BH}. In particular, we have the following result (for a direct proof using geometry of triangles, see \cite{Str}).

\begin{cor}\label{c16}
Every group given by a finite presentation satisfying $C^\prime (1/6)$ is hyperbolic.
\end{cor}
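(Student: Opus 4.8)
The plan is to deduce this directly from the two results quoted immediately above the statement, treating both as black boxes. Starting from a finite presentation $G=\langle X\mid \mathcal R\rangle$ satisfying $C^\prime(1/6)$, I would first pass to its symmetrization $G=\langle X\mid \mathcal R^\prime\rangle$. This does not change the group, and $\mathcal R^\prime$ is still finite, since it is obtained from $\mathcal R$ by adjoining only the finitely many cyclic shifts of $R^{\pm 1}$ for the finitely many $R\in\mathcal R$. By the very definition of the small cancellation condition for non-symmetrized presentations, $\langle X\mid\mathcal R^\prime\rangle$ satisfies $C^\prime(1/6)$, so that Lemma \ref{Green} applies to it.

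Applying Greendlinger's Lemma, every non-empty reduced word in $X\cup X^{-1}$ representing $1$ in $G$ contains a subword $U$ that is also a subword of some $R\in\mathcal R^\prime$ with $\|U\|>\|R\|/2$. This is precisely the conclusion of the Greendlinger lemma referred to in the statement preceding the corollary; equivalently, $\langle X\mid \mathcal R^\prime\rangle$ is a \emph{Dehn presentation}. By the characterization \cite[Chapter III.$\Gamma$, Theorem 2.6]{BH}, any group admitting such a finite presentation is hyperbolic, and hence $G$ is hyperbolic.

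There is no genuine obstacle here: all the substantive content is supplied by the two results we are permitted to cite. The only points requiring care are bookkeeping ones -- verifying that symmetrization keeps the presentation finite and that the $C^\prime(1/6)$ condition is inherited (both immediate from the definitions in the text), and confirming that the phrase ``the conclusion of the Greendlinger lemma'' in the cited theorem means exactly the Dehn-algorithm property produced by Lemma \ref{Green}. The real work, namely the equivalence between this combinatorial property and the geometric hyperbolicity of the Cayley graph (via a linear isoperimetric, or thin-triangles, argument), is precisely what the quoted theorem of \cite{BH} provides.
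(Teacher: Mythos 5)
Your proof is correct and is exactly the argument the paper intends: the corollary is stated as an immediate consequence of Lemma \ref{Green} together with the cited characterization of hyperbolic groups as those admitting a finite presentation satisfying the Greendlinger (Dehn) property, and your bookkeeping about symmetrization preserving finiteness and the $C^\prime(1/6)$ condition fills in the only details the paper leaves implicit.
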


It is worth noting that the constant $1/6$ is optimal. That is, Lemma \ref{Green} and Corollary \ref{c16} generally fail for group presentations satisfying $C^\prime(\lambda )$ for $\lambda >1/6$. Moreover, the condition $C^\prime (\lambda)$ becomes essentially unrestrictive for $\lambda >1/5$, see \cite{Gol} or Section 12.4 of \cite{Ols-book}.

We consider a particular example, which will be used (together with the Greendlinger lemma) in the proofs of Theorem \ref{hyp} and Theorem \ref{LHgen}.

\begin{ex}\label{Wnk}
For any $k,n\in \NN$, let
$$
W_n(k)=\langle u, v \mid R_1, \ldots , R_n\rangle .
$$
where
\begin{equation}\label{wj}
R_j=u^jvu^jv^{2}\ldots u^jv^{k}
\end{equation}
for all $j=1, \ldots, n$. It is easy to see that for every sufficiently large $k$ (e.g., $k\ge 30$) and every $n\in \NN$, this presentation satisfies the $C^\prime(1/6)$ condition and hence $W_n(k)$ is hyperbolic.
\end{ex}

\paragraph{4.2. Relatively and acylindrically hyperbolic groups.}
The notion of relative hyperbolicity was also suggested by Gromov in \cite{Gro}. Since then many equivalent definitions have been formulated. For a comprehensive survey, we refer to \cite{Hru}. We recall the definition based on the notion of a hyperbolically embedded collection of subgroups introduced in \cite{DGO}, which is most suitable for our purpose.

Let $G$ be a group, $\Hk$ a collection of subgroups of $G$, $X$ a (possibly infinite) subset of $G$. Suppose that
$$
G=\langle X\cup H_1\cup \ldots \cup H_k\rangle .
$$
We denote by $\Gxh $ the Cayley graph of $G$ whose edges are labeled by letters from the alphabet
$$
\mathcal A=X\sqcup H_1\sqcup \ldots \sqcup H_k.
$$
That is, two vertices $f,g\in V(\Gamma (G, \mathcal A))=G$ are connected by an edge going from $f$ to $g$ and labeled by $a\in \mathcal A$ iff $fa=g$ in $G$. Disjointness of the union in the definition of $\mathcal A$ means that if a letter $a\in H_i$ and a letter $b\in X$ (or $b\in H_j$ for $j\ne i$) represent the same element of $G$, then for every $f\in G$, the Cayley graph $\Gxh $ will have two edges connecting $f$ and $fa=fb$: one labeled by $a$ and the other labeled by $b$.

For each $i$, we naturally think of the Cayley graph $\Gamma_{H_i}=\Gamma (H_i,H_i)$ of $H_i$ with respect to the generating set $H_i$ as a (complete) subgraph of $\Gxh $. In this notation, we have the following.

\begin{defn}\label{he-def}
A collection of subgroups $\Hk$ is \emph{hyperbolically embedded in $G$ with respect to $X$}, denoted $\Hk \h (G,X)$, if the following conditions hold.
\begin{enumerate}
\item[(a)] The Cayley graph $\Gxh$ is hyperbolic.
\item[(b)] For every $i\in \{ 1, \ldots, k\}$ and every $n\in \mathbb N$, there are only finitely many elements $h\in H_i$ such that the vertices $h$ and $1$ can be connected in $\Gxh$ by a path of length at most $n$ avoiding edges of $\Gamma _{H_i}$.
\end{enumerate}
Further, we say  that the collection $\Hk$ is \emph{hyperbolically embedded} in $G$ and write $\Hk\h G$ if $\Hk\h (G,X)$ for some $X\subseteq G$.
\end{defn}

To illustrate this definition, we consider standard examples borrowed from \cite{DGO}.

\begin{ex}\label{bex}
\begin{enumerate}
\item[(a)] For any group $G$ we have $G\h G$.  We can take $X=\emptyset $ in this case. Similarly, if $H$ is a finite subgroup of a group $G$, then we always have $H\h G$. Indeed $H\h (G,G)$.

\item[(b)] Let $G=H\times \mathbb Z$, $X=\{ x\} $, where $x$ is a generator of $\mathbb Z$, $\mathcal A=X\sqcup H$. Then $\Gamma (G,\mathcal A)$ is quasi-isometric to a line and hence it is hyperbolic. However, every two elements $h_1, h_2\in H$ can be connected by a path of length at most $3$ in $\Gamma(G, \mathcal A)$ that avoids edges of $\Gamma _H$ (see Fig. \ref{fig0}). Thus $H\not\h (G,X)$ whenever $H$ is infinite.

\item[(c)]  Let $G=H\ast \mathbb Z$, $X=\{ x\} $, where $x$ is a generator of $\mathbb Z$, $\mathcal A=X\sqcup H$. In this case $\Gamma (G, \mathcal A=X\sqcup H)$ is quasi-isometric to a tree and no path connecting $h_1, h_2\in H$ and avoiding edges of $\Gamma_H$ exists unless $h_1=h_2$. Thus $H\h (G,X)$.
\end{enumerate}
\end{ex}

\begin{figure}
  % Requires \usepackage{graphicx}
  \centering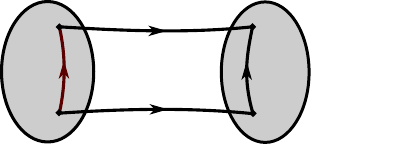
  \caption{Cayley graphs $\Gamma(G, \mathcal A)$ for $G=H\times \mathbb Z$ and $G=H\ast \mathbb Z$.}\label{fig0}
\end{figure}

We list some useful properties of hyperbolically embedded subgroups. The proposition below can easily be proved by generalizing the idea behind Example \ref{bex} (b).

\begin{prop}[Dahmani-Guirardel-Osin, {\cite[Proposition 4.33]{DGO}}]\label{malnorm}
Let $G$ be a group, $\Hk$ a collection of hyperbolically embedded subgroups of $G$
\begin{enumerate}
\item[(a)] For all $i$ and all $g\in G\setminus H$, we have $|g^{-1}H_ig\cap H_i|<\infty $.
\item[(b)] For all $i\ne j$ and all $g\in G$, we have $|g^{-1}H_ig\cap H_j|<\infty$.
\end{enumerate}
\end{prop}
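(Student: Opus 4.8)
The plan is to deduce both parts from a single construction that generalizes Example~\ref{bex}(b). The only property of hyperbolic embeddedness I will use is condition~(b) of Definition~\ref{he-def}: it asserts exactly that, for each fixed $n$, only finitely many elements of $H_i$ can be joined to $1$ in $\Gxh$ by a path of length at most $n$ that avoids the edges of $\Gamma_{H_i}$. So to bound $|g^{-1}H_ig\cap H_i|$ in part~(a) (where the hypothesis should read $g\in G\setminus H_i$), or $|g^{-1}H_ig\cap H_j|$ in part~(b), I would attach to each element $h$ of the set in question the element $h'=ghg^{-1}$, observe that $h'\in H_i$ and that $h\mapsto h'$ is injective, and then exhibit a $\Gamma_{H_i}$-avoiding path from $1$ to $h'$ whose length is bounded independently of $h$.

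The preliminary step is to normalize $g$. Writing $\mathcal A=X\sqcup H_1\sqcup\cdots\sqcup H_k$ and letting $\ell(\cdot)$ be word length with respect to $\mathcal A$, I replace $g$ by a shortest representative $g_0$ of the double coset $H_igH_i$ in part~(a), and of the mixed double coset $H_igH_j$ in part~(b). A short computation shows the cardinality is unchanged: in part~(a) one writes $g=ag_0b$ with $a,b\in H_i$ and uses $bH_ib^{-1}=H_i$, while in part~(b) one has $b\in H_j$ and conjugates by $b$, using $bH_jb^{-1}=H_j$. The virtue of a shortest double-coset representative is this: if $y_1\cdots y_m$ is a geodesic $\mathcal A$-word for $g_0$, then no nonempty prefix $y_1\cdots y_s$ ($1\le s\le m$) can lie in $H_i$, since otherwise $y_{s+1}\cdots y_m$ would be a strictly shorter representative of the same double coset (and for $s=m$ one also uses $g_0\notin H_i$, which holds whenever $g_0\ne 1$). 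In particular $y_1\notin H_i$ and every intermediate vertex of the geodesic path $p\colon 1\to g_0$ lies outside $H_i$; since the unique edge of $p$ touching the vertex $1$ is labelled $y_1\notin H_i$, the whole path $p$ avoids the edges of $\Gamma_{H_i}$.

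With this in hand I would form, for each $h$, the path spelling the word $W\,h\,W^{-1}$, where $W=y_1\cdots y_m$; using $g_0h=h'g_0$ this is exactly the concatenation $1\xrightarrow{\,p\,}g_0\xrightarrow{\,h\,}h'g_0\xrightarrow{\,h'p^{-1}\,}h'$ of $p$, the single edge labelled $h$, and the left-$h'$-translate of the reverse of $p$. Its length is $2\ell(g_0)+1$, uniform in $h$. The first and last segments avoid $\Gamma_{H_i}$ by the previous paragraph --- for the last segment because left multiplication by $h'\in H_i$ is a label-preserving automorphism of $\Gxh$ carrying $\Gamma_{H_i}$ onto itself. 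The middle edge avoids $\Gamma_{H_i}$ as well: in part~(a) its endpoints are $g_0$ and $h'g_0$, neither in $H_i$ because $g_0\notin H_i$; in part~(b) it is labelled by $h\in H_j$, which is never an edge of $\Gamma_{H_i}$ since $i\ne j$ and the alphabet is a disjoint union. Thus $1$ and $h'$ are joined by a $\Gamma_{H_i}$-avoiding path of length at most $2\ell(g_0)+1$, and condition~(b) makes the set of such $h'$ finite; injectivity of $h\mapsto h'$ finishes the bound.

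The only situation not covered is the degenerate mixed double coset $H_igH_j=H_iH_j$, i.e. $g_0=1$, which is the subproblem $|H_i\cap H_j|<\infty$; there I would join $1$ to $h\in H_i\cap H_j$ by the single edge carrying the $H_j$-copy of $h$, which avoids $\Gamma_{H_i}$ because its label lies in $H_j$, and again apply condition~(b). I expect the genuine obstacle to be precisely the requirement, hidden in the second paragraph, that the path from $1$ to $g_0$ avoid $\Gamma_{H_i}$ in its entirety: for an unnormalized $g$ a geodesic may repeatedly re-enter the coset $H_i$ and so run along edges of $\Gamma_{H_i}$, rendering condition~(b) useless. Minimizing over the double coset is exactly what excludes such excursions, so this normalization is the crux rather than a cosmetic convenience.
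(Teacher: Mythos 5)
Your argument is correct and is precisely the one the paper points to: the paper does not write out a proof of Proposition \ref{malnorm} but cites \cite[Proposition 4.33]{DGO} and remarks that it follows by generalizing the idea behind Example \ref{bex}(b), which is exactly what your conjugation path spelling $W\,h\,W^{-1}$, combined with passing to a shortest double-coset representative, accomplishes. Your identification of the normalization step as the real content (it is what forces the segment from $1$ to $g_0$, and hence its $h'$-translate, to avoid the edges of $\Gamma_{H_i}$) matches the argument in \cite{DGO}, and your handling of the degenerate coset $H_iH_j$ and of the typo $g\in G\setminus H_i$ is also correct.
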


\begin{cor}\label{K=1}
Suppose that a group $G$ contains a non-trivial torsion-free hyperbolically embedded subgroup. Then $G$ has no non-trivial finite normal subgroups.
\end{cor}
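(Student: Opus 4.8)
The plan is to argue by contradiction, extracting an infinite intersection of the form $g^{-1}Hg\cap H$ with $g\notin H$, in direct violation of the almost malnormality supplied by Proposition \ref{malnorm}(a). So suppose $N\lhd G$ is a non-trivial finite normal subgroup and let $H\h G$ be the given non-trivial torsion-free subgroup. Since a non-trivial torsion-free group is infinite, $H$ is infinite; moreover any element of $N\cap H$ has finite order (it lies in the finite group $N$), yet $H$ is torsion-free, so $N\cap H=1$.

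Next I would produce a non-trivial element of $H$ centralizing $N$. Conjugation defines a homomorphism $G\to \mathrm{Aut}(N)$ whose kernel is the centralizer $C_G(N)$; as $N$ is finite, $\mathrm{Aut}(N)$ is finite, so $C_G(N)$ has finite index in $G$. Consequently $C_G(N)\cap H$ has finite index in the infinite group $H$ and is therefore itself infinite; in particular I may choose a non-trivial $h\in C_G(N)\cap H$, which has infinite order because $H$ is torsion-free.

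Finally I would derive the contradiction. Pick any non-trivial $a\in N$. Since $N\cap H=1$, we have $a\notin H$, i.e. $a\in G\setminus H$, so Proposition \ref{malnorm}(a) applies to the conjugating element $a$ and guarantees $|a^{-1}Ha\cap H|<\infty$. On the other hand $h$ centralizes $N$, hence $a^{-1}h^k a=h^k$ for every $k\in\ZZ$, so every power $h^k$ lies in $a^{-1}Ha\cap H$. As $h$ has infinite order, these powers are pairwise distinct, making $a^{-1}Ha\cap H$ infinite---a contradiction. This forces $N=1$, proving the claim.

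There is no serious obstacle here: once Proposition \ref{malnorm} is in hand, the argument is pure bookkeeping. The only point requiring a little care is arranging that the conjugating element $a$ lies strictly outside $H$ (so that part (a), rather than the trivial case $g\in H$, can be invoked); this is exactly what the torsion-freeness of $H$ buys us, via $N\cap H=1$. Note also that torsion-freeness is used twice---once to force $N\cap H=1$ and once to guarantee that the chosen $h$ has infinite order---so both features of $H$ are genuinely needed.
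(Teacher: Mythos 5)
Your proof is correct and uses essentially the same mechanism as the paper: the centralizer $C_G(N)$ has finite index, so $C_G(N)\cap H$ is an infinite subset of $a^{-1}Ha\cap H$ for $a\in N$, contradicting Proposition \ref{malnorm}(a). The paper arranges this slightly differently (it deduces $K\le H$ from the malnormality and only then invokes torsion-freeness), but the two arguments are the same in substance.
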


\begin{proof}
Suppose that $K$ is a finite normal subgroup of $G$.  Then the centralizer $C_{G} (K)$ has finite index in $G$. For every $k\in K$, we have $|k^{-1} Hk\cap H|\ge |C_G(K)\cap H|=\infty $. By Proposition \ref{malnorm} (a), this implies $K\le H$. Since $H$ is torsion-free, we have $K=\{ 1\}$.
\end{proof}

Recall that for a group $G=\langle A\rangle$, the word length of an element $g\in G$ with respect to $A$, denoted $|g|_A$, is the minimal $n$ such that $g=a_1\ldots a_n$, where $a_i\in A\cup A^{-1}$ for all $i$. A finitely generated subgroup $H$ of a group $G$ is \emph{undistorted} in $G$ with respect to some (not necessarily finite) generating set $A$ of $G$ if for some (equivalently, any) finite generating set $B$ of $H$, there exists a constant $c>0$ such that
$$
|h|_B\le c|h|_A
$$
for all $h\in H$. The next result is obtained in the course of proving Theorem 4.31 in \cite{DGO} (see inequality (35) there). See also \cite[Corollary 4.8]{AHO} for a much stronger result.

\begin{prop}[Dahmani-Guirardel-Osin, \cite{DGO}]\label{undist}
Let $G$ be a finitely generated group, $A$ a (possibly infinite) generating set of $G$, $H$ a subgroup of $G$. Suppose that $H\h (G,A)$. Then $H$ is finitely generated and undistorted in $G$ with respect to $A$.
\end{prop}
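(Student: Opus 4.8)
The plan is to study $H$ through the \emph{relative metric} $\hat d$ on $H$ induced by $\Gamma(G,\mathcal A)$, where $\mathcal A=A\sqcup H$: for $h_1,h_2\in H$ set $\hat d(h_1,h_2)$ to be the length of a shortest path in $\Gamma(G,\mathcal A)$ joining $h_1$ to $h_2$ that uses no edge of $\Gamma_H$ (and $\hat d(h_1,h_2)=+\infty$ if no such path exists). Left multiplication by an element of $H$ is a label-preserving automorphism of $\Gamma(G,\mathcal A)$ that fixes the coset $H$ setwise and hence sends $\Gamma_H$-avoiding paths to $\Gamma_H$-avoiding paths, so $\hat d$ is left $H$-invariant; it is symmetric and satisfies the triangle inequality (one may concatenate admissible paths, since passing \emph{through} an $H$-vertex is allowed — only $\Gamma_H$-\emph{edges} are forbidden). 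Thus $\hat d$ is determined by the norm $\|h\|:=\hat d(1,h)$, and condition (b) of Definition \ref{he-def} says exactly that each ball $B_N:=\{h\in H:\|h\|\le N\}$ is finite.

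The one easy inequality I would record first is $\|h\|\le |h|_A$ for every $h\in H$. Since $A$ generates $G$ we have $|h|_A<\infty$, and an $A$-geodesic for $h$ is a path from $1$ to $h$ in $\Gamma(G,\mathcal A)$ all of whose edges are labelled by $A$; because $A$ and $H$ are disjoint in $\mathcal A$, this path uses no $\Gamma_H$-edge, so $\hat d(1,h)\le|h|_A$. I would also note the trivial direction for products: if $h=\omega_1\cdots\omega_m$ with $\omega_i\in B_N$, then left-invariance and the triangle inequality give $\|h\|\le mN$, so that the word metric $d_{B_N}$ bounds $\hat d$ from above.

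The heart of the matter, and the only step that genuinely uses hyperbolicity of $\Gamma(G,\mathcal A)$, is the reverse comparison: there is a constant $c$ so that, with $B:=B_N$ for a suitable $N=N(\delta)$, one has $|h|_B\le c\,\hat d(1,h)$ for all $h\in H$ (finiteness of this bound already forces $h\in\langle B\rangle$, so $B$ generates $H$). Granting this, the proposition is immediate: $B$ is finite by (b), whence $H=\langle B\rangle$ is finitely generated, and $|h|_B\le c\,\hat d(1,h)\le c\,|h|_A$ shows $H$ is undistorted in $G$ with respect to $A$; replacing $B$ by any other finite generating set only rescales the constant. To prove the comparison one fixes a shortest $\Gamma_H$-avoiding path $q$ realizing $\hat d(1,h)$ and attempts to subdivide it at vertices lying in $H$, producing a factorization $h=\omega_1\cdots\omega_m$ with $\omega_i\in B$ and $m\le c\,\hat d(1,h)$.

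The main obstacle is precisely this subdivision. A $\hat d$-shortest path $q$ need not pass through any vertex of $H$ besides its endpoints: in $\Gamma(G,\mathcal A)$ the entire coset $H$ has diameter $\le 1$ (every $h\in H$ is joined to $1$ by the single $H$-edge labelled $h$), so $q$ is a long detour between two $\Gamma$-close points, and the generators cannot be read off naively. This is where $\delta$-hyperbolicity is indispensable: closing $q$ up with geodesics and $H$-edges into geodesic polygons and invoking thinness, one shows that the isolated $H$-components occurring along these polygons have uniformly bounded relative length, which permits inserting intermediate $H$-vertices spaced by a bounded amount of $\hat d$. This is exactly the estimate obtained in the course of \cite[Theorem 4.31]{DGO} (inequality (35) there), which I would take as the key geometric input; reproving that estimate from the definition of hyperbolic embedding is the only non-routine part of the argument.
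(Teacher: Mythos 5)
Your outline is correct and matches the paper's treatment: the paper gives no proof of Proposition \ref{undist} at all, citing it as a byproduct of inequality (35) in the proof of \cite[Theorem 4.31]{DGO}, and your argument---the relative metric $\hat d$, its left $H$-invariance and the bound $\hat d(1,h)\le |h|_A$, finiteness of $\hat d$-balls from condition (b) of Definition \ref{he-def}, and the reverse comparison $|h|_B\le c\,\hat d(1,h)$---is precisely the standard reduction to that inequality. Since the one non-routine step is the estimate you explicitly defer to the same place in \cite{DGO} that the paper itself points to, the two approaches coincide.
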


We now give the definition of a relatively hyperbolic group based on hyperbolically embedded subgroups. For finitely generated groups it is equivalent to all other definitions, see \cite[Proposition 4.28]{DGO} and \cite[Corollary 2.48]{Osi06}.

\begin{defn}\label{rhdef}
A group $G$ is hyperbolic relative to a collection of subgroups $\Hk$, called  \emph{peripheral subgroups}, if and only if $\Hk\h (G,X)$ for some finite subset $X\subseteq G$. We say that $G$ is \emph{non-elementary relatively hyperbolic} if it is not virtually cyclic and is hyperbolic relative to a collection of proper subgroups.
\end{defn}

\begin{rem}\label{RHne}
By \cite[Lemma 4.4]{Osi06b}, if $G$ is hyperbolic relative to $\Hk$ and one of the subgroups $H_i$ is proper and infinite, then all other subgroups are proper.
\end{rem}

\begin{ex}\label{RHex}
\begin{enumerate}
\item[(a)] Every group is hyperbolic relative to itself.
\item[(b)] A group is hyperbolic if and only if it is hyperbolic relative to the trivial subgroup (or, equivalently, with respect to the empty collection of subgroups).
\item[(c)] If $G=H_1\ast H_2$, then $G$ is hyperbolic relative to $\{ H_1, H_2\}$.
\item[(d)] Let $M$ be a compact orientable 3-manifold with connected boundary such that  $M\setminus\partial M$ admits a complete finite volume hyperbolic structure. Then $\pi_1(M)$ is hyperbolic relative to $\pi_1(\partial M)$ \cite{Gro}.
\end{enumerate}
\end{ex}

We mention one useful property of relative hyperbolicity.

\begin{prop} [{\cite[Corollary 2.41]{Osi06}}]\label{rhh}
If a finitely generated group $G$ is hyperbolic relative to a collection of hyperbolic subgroups, then $G$ is hyperbolic itself.
\end{prop}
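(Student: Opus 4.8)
The plan is to produce a \emph{finite} generating set of $G$ whose Cayley graph is hyperbolic, by combining the hyperbolicity of the relative Cayley graph $\Gamma(G,\mathcal A)$ with the intrinsic hyperbolicity of the peripheral subgroups. Since each $H_i$ is a hyperbolic group, it is finitely generated; fix a finite generating set $B_i$ of $H_i$ and set $A=X\cup B_1\cup\cdots\cup B_k$. Because $X$ is finite (Definition \ref{rhdef}) and $G=\langle X\cup H_1\cup\cdots\cup H_k\rangle$, the set $A$ is a finite generating set of $G$, and it remains to prove that $\Gamma(G,A)$ is hyperbolic. The two metrics to keep in mind are the word metric $d_A$ and the relative word metric $d_{\mathcal A}$ coming from $\mathcal A=X\sqcup H_1\sqcup\cdots\sqcup H_k$; the inclusion $A\subseteq\mathcal A$ gives the $1$-Lipschitz identity map $\Gamma(G,A)\to\Gamma(G,\mathcal A)$, and by Definition \ref{he-def}(a) the target is hyperbolic.

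I would carry this out through Dehn functions, using Gromov's criterion that a finitely presented group is hyperbolic if and only if its Dehn function is linear. First, I would invoke the relative isoperimetric characterization of relative hyperbolicity, which is the content of the equivalence of Definition \ref{rhdef} with the definitions of \cite{Osi06} (see \cite[Proposition 4.28]{DGO} and \cite[Corollary 2.48]{Osi06}): $G$ admits a relative presentation $\langle\, X,\, \mathcal H \mid \mathcal R,\, \bigcup_i\mathcal S_i\,\rangle$ with $\mathcal R$ finite and a \emph{linear} relative isoperimetric inequality, where $\mathcal S_i$ is the set of all words over $B_i$ representing $1$ in $H_i$. Second, since each $H_i$ is hyperbolic it has a finite presentation $\langle B_i\mid \mathcal S_i'\rangle$ with linear Dehn function. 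Replacing each infinite peripheral relation set $\mathcal S_i$ by the finite set $\mathcal S_i'$ turns the relative presentation into the honest finite presentation $G=\langle A\mid \mathcal R\cup\bigcup_i\mathcal S_i'\rangle$.

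The combination step is where the estimate happens. Given a word $w$ over $A$ with $|w|=n$ representing $1$ in $G$, the linear relative isoperimetric inequality expresses $w$ in the free group as a product of at most $Cn$ conjugates of elements of $\mathcal R$ together with subwords $h_1,\dots,h_m$, each representing $1$ in some $H_i$; crucially, the relative estimate simultaneously bounds the total length $\sum_j |h_j|_{B_i}$ of these peripheral pieces by $O(n)$ (this is where the undistortion of the $H_i$, Proposition \ref{undist}, together with the precise quantitative form of the relative Dehn estimate, is used: the total relative length of the peripheral components of a diagram with boundary $n$ and $O(n)$ bounded-perimeter $\mathcal R$-cells is $O(n)$, and undistortion converts this linearly into $B_i$-length). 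Filling each peripheral piece $h_j$ by the linear Dehn function of the corresponding $H_i$ costs $O(|h_j|_{B_i})$ relators from $\mathcal S_i'$, so the total cost of all peripheral fillings is $\sum_j O(|h_j|_{B_i})=O(n)$. Adding the $Cn$ cells coming from $\mathcal R$, the word $w$ bounds a van Kampen diagram over $\langle A\mid \mathcal R\cup\bigcup_i\mathcal S_i'\rangle$ of area $O(n)$. Hence the Dehn function of $G$ is linear and $G$ is hyperbolic.

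The main obstacle is precisely this quantitative control: the relative isoperimetric inequality must be used in the sharp form that bounds the \emph{total peripheral length} linearly in $n$, not merely the number of relative cells; otherwise the peripheral fillings could accumulate superlinear area. This is exactly what undistortion of the peripheral subgroups (Proposition \ref{undist}) buys us. A purely geometric alternative would compare a geodesic triangle in $\Gamma(G,A)$ with its $1$-Lipschitz image in the hyperbolic graph $\Gamma(G,\mathcal A)$ and argue that the peripheral ``excursions'' of the sides fellow-travel, using that the $H_i$ are uniformly hyperbolic and, by Definition \ref{he-def}(b), penetrate cosets without backtracking; the difficulty there is the same, namely controlling how geodesics enter and leave the peripheral cosets $gH_i$.
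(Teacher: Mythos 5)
The paper does not actually prove this proposition: it is imported with the citation \cite[Corollary 2.41]{Osi06}, so the only meaningful comparison is with that source. Your strategy --- pass to the isoperimetric characterization of relative hyperbolicity, take a finite relative presentation $\langle X,\mathcal H\mid \mathcal R,\bigcup_i\mathcal S_i\rangle$ with linear relative Dehn function, substitute for each peripheral multiplication table $\mathcal S_i$ a finite presentation of the hyperbolic group $H_i$ with linear Dehn function, and add up areas --- is precisely the route of the cited source (its Theorem 2.40, the combination theorem for relative Dehn functions, of which Corollary 2.41 is an immediate consequence). So the plan is the right one, and the accounting in your third paragraph is correct once the linear bound on the total peripheral length is granted.

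The one flaw is in how you propose to justify that bound, which you correctly single out as the crux. Proposition \ref{undist} (undistortion) is not the mechanism: undistortion converts a bound on $|h|_A$ into a bound on $|h|_{B_i}$, but the relative isoperimetric inequality gives no a priori control on the $A$-lengths of the peripheral letters occurring in a filling, so there is nothing for undistortion to act on and the step is unjustified as written. The correct mechanism is purely combinatorial. Take a van Kampen diagram $\Delta$ over the relative presentation with boundary label $w$ and a linear number of $\mathcal R$-cells, and reduce it so that no two $\mathcal S_i$-cells share an edge (adjacent $\mathcal S_i$-cells can always be merged into one, since the boundary of their union still spells a word over the alphabet $H_i$ representing $1$); cells over distinct peripheral alphabets can never share an edge because the alphabets are disjoint. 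In the reduced diagram every edge of every $\mathcal S_i$-cell therefore lies either on $\partial\Delta$ or on the boundary of an $\mathcal R$-cell. Edges on $\partial\Delta$ are labelled by letters of $w$, which is a word over $A$, hence by elements of $B_i^{\pm1}$ of $B_i$-length $1$; edges on $\mathcal R$-cells carry one of the finitely many peripheral letters appearing in the finite set $\mathcal R$, hence have uniformly bounded $B_i$-length; and the number of such edges is at most $|w|$ plus a constant times the number of $\mathcal R$-cells, i.e.\ $O(n)$. This yields $\sum_j|h_j|_{B_i}=O(n)$ directly, after which your filling count goes through. In short: replace the appeal to Proposition \ref{undist} by the diagram-reduction argument underlying \cite[Theorem 2.40]{Osi06}, and the proof is complete.
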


Finally, we discuss acylindrically hyperbolic groups. An isometric action of a group $G$ on a metric space $S$ is {\it acylindrical} if for every $\e>0$ there exist $R,N>0$ such that for every two points $x,y\in S$ with $\d (x,y)\ge R$, there are at most $N$ elements $g\in G$ satisfying
$$
\d(x,gx)\le \e \;\;\; {\rm and}\;\;\; \d(y,gy) \le \e.
$$
The notion of acylindricity goes back to Sela's paper \cite{Sel97}, where it was considered for groups acting on trees. In the context of general metric spaces, the above definition is due to Bowditch \cite{Bow}.

Informally, one can think of this condition as a kind of properness of the action on $S\times S$ minus a ``thick diagonal".

\begin{ex}
\begin{enumerate}
\item[(a)] It is easy to see that proper cocompact actions are acylindrical.
\item[(b)] Every group action on a bounded space $S$ is acylindrical. Indeed, it suffices to take $R>{\rm diam}(S)$.
\end{enumerate}
\end{ex}

Every group has an acylindrical action on a hyperbolic space, namely the trivial action on the point. For this reason, we want to avoid elementary actions in the definition below. Recall that an action of a group $G$ on a hyperbolic space $S$ is \emph{non-elementary} if the limit set of $G$ on the Gromov boundary $\partial S$ has infinitely many points. For readers unfamiliar with the notions of the Gromov boundary and limit sets, we mention a useful  equivalent characterization: an acylindrical action of a group $G$ on a hyperbolic spaces is non-elementary if and only if $G$ is not virtually cyclic and the action has infinite orbits {\cite[Theorem 1.1]{Osi16}}.

\begin{thm}[{\cite[Theorem 1.2]{Osi16}}]\label{aa}
For any group $G$, the following conditions are equivalent.
\begin{enumerate}
\item[(AH$_1$)] $G$ admits a non-elementary acylindrical action on a hyperbolic space.
\item[(AH$_2$)] $G$ is not virtually cyclic and there exists a (possibly infinite) generating set $A$ of $G$ such that the corresponding Cayley graph $\Gamma (G,A)$ is hyperbolic, unbounded, and the natural action of $G$ on $\Gamma (G,A)$ is acylindrical.
\item[(AH$_3$)] $G$ contains a proper infinite hyperbolically embedded subgroup.
\end{enumerate}
\end{thm}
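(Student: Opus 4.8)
The plan is to establish the cycle of implications (AH$_1$) $\Rightarrow$ (AH$_3$) $\Rightarrow$ (AH$_2$) $\Rightarrow$ (AH$_1$). The last of these is almost immediate: if $\Gamma (G,A)$ is hyperbolic and unbounded and the left-multiplication action of $G$ is acylindrical, then this action is already an isometric acylindrical action on a hyperbolic space, so it suffices to check that it is non-elementary. Since a Cayley graph is vertex-transitive, the orbit of the identity is all of $G$, which is infinite (as $G$ is not virtually cyclic) and unbounded; by the characterization of non-elementary acylindrical actions recalled just before the theorem, the action is non-elementary, giving (AH$_1$).

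For (AH$_1$) $\Rightarrow$ (AH$_3$) I would first invoke the trichotomy for acylindrical actions on hyperbolic spaces: such an action is elliptic, lineal, or of general type, and a non-elementary action (infinite limit set) must be of general type, so in particular contains a loxodromic element $g$. Acylindricity forces $g$ to satisfy the weak proper discontinuity (WPD) condition essentially by definition, since acylindricity is a uniform form of WPD. The heart of the argument is then to pass from $g$ to a hyperbolically embedded subgroup: one takes the maximal elementary subgroup $E(g)$, i.e. the stabilizer of the pair $\{g^{+\infty}, g^{-\infty}\}\subseteq \partial S$. Acylindricity guarantees that $E(g)$ is virtually cyclic; it is infinite since it contains $\langle g\rangle$; and it is proper because a general-type action contains loxodromics with distinct axes, so $G$ cannot stabilize $\{g^{+\infty}, g^{-\infty}\}$. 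Finally, the result of \cite{DGO} that the elementary closure of a loxodromic WPD element is hyperbolically embedded yields $E(g)\h G$, which is (AH$_3$). I expect this to be the main obstacle: the classification of loxodromic elements and, above all, the verification that $E(g)\h G$ package the bulk of the Dahmani--Guirardel--Osin machinery and constitute the technically hardest step.

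For (AH$_3$) $\Rightarrow$ (AH$_2$), suppose $H\h (G,X)$ with $H$ proper and infinite. First I would show $G$ is not virtually cyclic: otherwise the infinite proper subgroup $H$ would have finite index, so for any $g\in G\setminus H$ the conjugate $g^{-1}Hg$ would also have finite index and $g^{-1}Hg\cap H$ would be infinite, contradicting the almost malnormality of Proposition \ref{malnorm}(a). Next, the Cayley graph $\Gamma (G, X\sqcup H)$ is hyperbolic by condition (a) of Definition \ref{he-def}, and it is unbounded because $H$ is proper and $G$ is not virtually cyclic (equivalently, the action admits a loxodromic element). The remaining and delicate point is acylindricity of the $G$-action on this graph (or on a quasi-isometric Cayley-graph model obtained by enlarging $X$): here condition (b) of Definition \ref{he-def} --- the finiteness of the set of elements of $H_i$ joinable to $1$ by short paths avoiding $\Gamma_{H_i}$ --- is exactly what supplies the uniform bound $N(\e)$ required in the definition of an acylindrical action. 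This produces the generating set demanded by (AH$_2$) and closes the cycle.
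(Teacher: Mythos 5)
The paper does not prove Theorem \ref{aa}: it is quoted from \cite{Osi16} and used as a black box, so there is no internal proof to compare against. Judged on its own terms, your cycle (AH$_1$) $\Rightarrow$ (AH$_3$) $\Rightarrow$ (AH$_2$) $\Rightarrow$ (AH$_1$) is the standard architecture, and two of the three arrows are essentially right: (AH$_2$) $\Rightarrow$ (AH$_1$) is immediate from the characterization of non-elementary acylindrical actions recalled just before the theorem, and (AH$_1$) $\Rightarrow$ (AH$_3$) correctly reduces to the Dahmani--Guirardel--Osin theorem that the maximal elementary subgroup $E(g)$ of a loxodromic WPD element is virtually cyclic and hyperbolically embedded, together with the observation that acylindricity gives WPD and that a general-type action has independent loxodromics (so $E(g)$ is proper).

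The genuine gap is in (AH$_3$) $\Rightarrow$ (AH$_2$), precisely at the step you describe as ``delicate'' and then dispatch in one sentence. Condition (b) of Definition \ref{he-def} is a local finiteness statement about the relative metric on $H$ alone: for each $n$, only finitely many $h\in H$ can be joined to $1$ by a path of length at most $n$ avoiding $\Gamma_H$. Acylindricity of the $G$-action on $\Gamma(G, X\sqcup H)$ is a uniform statement about \emph{all} pairs of vertices at distance at least $R(\e)$ and \emph{all} elements of $G$, not just elements of $H$ displacing the basepoint; the former does not ``exactly supply'' the bound $N(\e)$. Deriving acylindricity from hyperbolic embeddedness is in fact the main technical theorem of \cite{Osi16} (Theorem 5.4 there): the statement proved is that acylindricity holds only after enlarging $X$ to a suitable $X\cup Y$, and the proof runs through a careful analysis of geodesics penetrating cosets of $H$ (``separating cosets''), not through Definition \ref{he-def}(b) directly. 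Your parenthetical concession that one may need to enlarge $X$ does not supply that argument, and consequently you have the difficulty located in the wrong implication: (AH$_1$) $\Rightarrow$ (AH$_3$) is a direct quotation of \cite{DGO}, whereas (AH$_3$) $\Rightarrow$ (AH$_2$) is the genuinely new and hardest content of \cite{Osi16}. A minor further point: unboundedness of $\Gamma(G,X\sqcup H)$ for $H$ proper and infinite also needs justification (it follows from the existence of a loxodromic element for this action, again a theorem of \cite{DGO}), not merely from the assertion that $H$ is proper and $G$ is not virtually cyclic.
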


\begin{defn}\label{ahdef}
A group $G$ is \emph{acylindrically hyperbolic} if it satisfies either of the equivalent conditions  (AH$_1$)--(AH$_3$).
\end{defn}

The class of acylindrically hyperbolic groups includes non-elementary hyperbolic and relatively hyperbolic groups, mapping class groups of closed surfaces of non-zero genus, $Out(F_n)$ for $n\ge 2$, non-virtually cyclic groups acting properly on proper $CAT(0)$ spaces and containing a rank-$1$ element, groups of deficiency at least $2$, most $3$-manifold groups, automorphism groups of some algebras (e.g., the Cremona group of birational transformations of the complex projective plane) and many other examples. For more details we refer to the survey \cite{Osi18}.

By \cite[Theorem 2.24]{DGO}, every acylindrically hyperbolic group (in particular, every non-elementary hyperbolic or non-elementary relatively hyperbolic group) contains a unique maximal finite normal subgroup denoted by $K(G)$.  We call $K(G)$ the \emph{finite radical} of $G$.  In what follows, we will need a slightly more precise version of condition (AH$_3$) obtained in \cite{ABO} (a similar result is also proved in \cite{DGO}). By $F_2$ we denote the free group of rank $2$.

\begin{prop}[Abbott-Balasubramanya-Osin, {\cite[Proposition 5.13]{ABO}}]\label{KG}
Let $G$ be an acylindrically hyperbolic group. Suppose that $A$ is a generating set of $G$ satisfying condition (AH$_2$). Then $G$ contains a subgroup $H\cong K(G)\times F_2$ such that $H\h (G,A)$.
\end{prop}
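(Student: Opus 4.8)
The plan is to manufacture $H$ from two loxodromic elements of the acylindrical action on $\Gamma(G,A)$ furnished by (AH$_2$), chosen so that they centralize $K(G)$, and then to verify the hyperbolic embedding by hand using acylindricity. First I would pass to the centralizer $C=C_G(K(G))$. Since $K(G)$ is finite and normal, the conjugation action of $G$ on $K(G)$ has finite image in $\mathrm{Aut}(K(G))$, so $C$ has finite index in $G$. A finite-index subgroup has the same limit set on the boundary of $\Gamma(G,A)$ and inherits acylindricity, so the restricted action of $C$ on $\Gamma(G,A)$ is again non-elementary and acylindrical; by Theorem~\ref{aa} it contains loxodromic elements, which are weakly properly discontinuous because the ambient action is acylindrical. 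Using the standard fact that a non-elementary acylindrical action admits infinitely many pairwise independent loxodromic elements \cite{Osi16,DGO}, I would select two independent loxodromics $a,b\in C$, meaning $\{a^{\pm\infty}\}\cap\{b^{\pm\infty}\}=\varnothing$ in $\partial\Gamma(G,A)$.

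Next I would pin down the isomorphism type. For any loxodromic $g$, the subgroup $K(G)\langle g\rangle$ (a subgroup because $K(G)\lhd G$) is virtually cyclic and, containing $\langle g\rangle$ with finite index, has the same limit set $\{g^{\pm\infty}\}$ as $\langle g\rangle$; hence it stabilizes $\{g^{\pm\infty}\}$ and lies in the maximal elementary subgroup $E(g)$. In particular $K(G)\le E(a)\cap E(b)$. Replacing $a,b$ by sufficiently high powers, a ping-pong argument on $\partial\Gamma(G,A)$ yields $\langle a,b\rangle\cong F_2$. Since $a,b\in C$ centralize $K(G)$, and since $\langle a,b\rangle$ is torsion-free while $K(G)$ is finite (so $K(G)\cap\langle a,b\rangle=\{1\}$), the subgroup they generate is an internal direct product:
\[
H:=\langle K(G),a,b\rangle = K(G)\times\langle a,b\rangle\cong K(G)\times F_2 .
\]

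The hard part, and the main obstacle, is to establish $H\h(G,A)$, i.e.\ conditions (a) and (b) of Definition~\ref{he-def}. Here I would invoke the machinery of \cite{DGO}: independent loxodromic weakly properly discontinuous elements have jointly hyperbolically embedded maximal elementary subgroups, so $\{E(a),E(b)\}\h(G,A)$, and then apply the combination and geometric-separation results of \cite{DGO} to the subgroup generated by the common finite part $K(G)$ together with suitably high powers of $a$ and $b$. Concretely, one must show that the Cayley graph $\Gamma(G,\,A\sqcup H)$ is hyperbolic (condition (a)) and that, for each $n$, only finitely many $h\in H$ can be joined to $1$ by a path of length at most $n$ in $\Gamma(G,\,A\sqcup H)$ avoiding the complete subgraph $\Gamma_H$ (condition (b)). I expect condition (b) to be the crux: one shows that a short $\Gamma_H$-avoiding path between two far-apart elements of $H$ would force many distinct group elements to translate a fixed pair of boundary-far points by a bounded amount, which acylindricity (equivalently, the WPD property of $a$ and $b$) forbids. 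Once the embedding is in place, finite generation and undistortion of $H$ follow at once from Proposition~\ref{undist}.
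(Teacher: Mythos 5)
The paper offers no proof of Proposition \ref{KG}: it is imported verbatim from \cite[Proposition 5.13]{ABO}, so there is no internal argument to compare yours against. Judged on its own, your outline is correct up to and including the construction of $H$ as an internal direct product, but the specific subgroup you propose, $H=K(G)\times\langle a,b\rangle$ with $a=a_0^n$, $b=b_0^n$ high powers of independent loxodromics $a_0,b_0\in C_G(K(G))$, can \emph{never} be hyperbolically embedded; the last step is not merely the ``hard part'' left to the machinery of \cite{DGO} --- it is false for this choice of $H$. Indeed, by Proposition \ref{malnorm}(a), $H\h (G,A)$ would force $|g^{-1}Hg\cap H|<\infty$ for every $g\in G\setminus H$. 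Take $g=a_0$. First, $a_0\notin H$ once $n\ge 2$: if $a_0=kw$ with $k\in K(G)$ and $w\in\langle a,b\rangle$, then $w=k^{-1}a_0$ commutes with $a=a_0^n$ (both factors do, since $a_0$ centralizes $K(G)$), so $w\in\langle a\rangle$ because the centralizer of a basis element of a free group is the cyclic group it generates; writing $w=a^m$ gives $a_0^{1-nm}=k$, an element of finite order, forcing $nm=1$. But $a_0^{-1}Ha_0\cap H\supseteq a_0^{-1}\langle a\rangle a_0=\langle a\rangle$ is infinite. So $H$ violates almost malnormality and $H\not\h (G,A)$, no matter how the combination and separation results are deployed.

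The repair --- and what \cite[Theorem 6.14]{DGO} (the ``similar result'' the paper alludes to) and \cite{ABO} actually do --- is to generate the free factor not by powers of $a_0$ and $b_0$ themselves but by suitable \emph{mixed} words such as $a_0^{m_1}b_0^{m_2}$ and $a_0^{m_3}b_0^{m_4}$ with large pairwise distinct exponents, chosen after first establishing $\{E(a_0),E(b_0)\}\h (G,A)$. Such elements generate a free group by a small cancellation argument over that hyperbolically embedded pair, and the subgroup they generate together with $K(G)$ \emph{is} almost malnormal, precisely because no nontrivial power of $a_0$ or $b_0$ alone lies in it. The rest of your scaffolding --- passing to the finite-index centralizer of $K(G)$, the observation that $K(G)\le E(g)$ for every loxodromic $g$, the direct-product decomposition, and the plan to verify condition (b) of Definition \ref{he-def} via acylindricity --- is sound and consistent with the cited sources; only the choice of generators for the free part must be changed, and with it the verification of the embedding is best delegated to, or modeled on, \cite[Theorem 6.14]{DGO} rather than rebuilt from scratch.
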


We will also use the following ``relatively hyperbolic version" of Proposition \ref{KG}.

\begin{cor}\label{F2RH}
Let $G$ be a finitely generated group hyperbolic relative to a collection of subgroups $\Hk$. Suppose that $G$ is not virtually cyclic and $H_i\ne G$ for  all $i$. Then there exists a subgroup  $H\cong K(G)\times F_2$ such that $G$ is hyperbolic relative to $\{H, H_1, \ldots, H_k\}$.
\end{cor}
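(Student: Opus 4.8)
The plan is to reduce the statement to Proposition \ref{KG} by working with the relative generating set, and then to reassemble the peripheral family using transitivity of hyperbolic embeddings. By Definition \ref{rhdef}, the hypothesis that $G$ is hyperbolic relative to $\Hk$ means $\Hk \h (G, X)$ for some finite subset $X \subseteq G$. I would set
$$
\mathcal A = X \sqcup H_1 \sqcup \cdots \sqcup H_k,
$$
so that $\Gamma(G, \mathcal A)$ is the relative Cayley graph and $\mathcal A$ generates $G$. The first goal is to show that $\mathcal A$ is a generating set satisfying condition (AH$_2$) of Theorem \ref{aa}. Hyperbolicity of $\Gamma(G, \mathcal A)$ is immediate from part (a) of Definition \ref{he-def}, and $G$ is not virtually cyclic by hypothesis.

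The remaining requirements of (AH$_2$) --- unboundedness of $\Gamma(G, \mathcal A)$ and acylindricity of the $G$-action on it --- are the substantive point. These follow from the standard fact that a non-elementary relatively hyperbolic group acts acylindrically and non-elementarily (in particular, with unbounded orbits) on its relative Cayley graph $\Gamma(G, X \sqcup H_1 \sqcup \cdots \sqcup H_k)$; see \cite{Osi16}. In the degenerate case where all $H_i$ are finite this input is unnecessary: then $\mathcal A$ is finite, $G$ is hyperbolic by Proposition \ref{rhh}, and $G$ acts properly and cocompactly on the locally finite graph $\Gamma(G, \mathcal A)$, hence acylindrically (as recorded in the excerpt), the graph being unbounded since $G$ is infinite. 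Either way, $\mathcal A$ witnesses (AH$_2$); in particular $G$ is acylindrically hyperbolic (Definition \ref{ahdef}) and its finite radical $K(G)$ is well defined.

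With $\mathcal A$ established as a witness for (AH$_2$), I would apply Proposition \ref{KG} to the generating set $A = \mathcal A$, obtaining a subgroup
$$
H \cong K(G) \times F_2, \qquad H \h (G, \mathcal A).
$$
Unravelling $\mathcal A$, this says $H \h (G, X \sqcup H_1 \sqcup \cdots \sqcup H_k)$. To conclude, I would invoke transitivity of hyperbolic embeddings (a composition property of hyperbolically embedded subgroups established in \cite{DGO}): since $\Hk \h (G, X)$ and $H \h (G, X \sqcup H_1 \sqcup \cdots \sqcup H_k)$, the enlarged collection satisfies $\{H, H_1, \ldots, H_k\} \h (G, X)$. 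Because $X$ is finite, Definition \ref{rhdef} yields exactly that $G$ is hyperbolic relative to $\{H, H_1, \ldots, H_k\}$, as required. The two steps I expect to be genuinely nontrivial, and which rely on results outside this excerpt, are the acylindricity of the action on the relative Cayley graph (needed so that Proposition \ref{KG} applies with this specific $\mathcal A$) and the transitivity statement that folds the newly produced $H$ back into the peripheral collection; everything else is routine verification.
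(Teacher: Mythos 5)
Your argument is correct and, for its first two steps, coincides with the paper's proof: both take the relative generating set $A=X\cup H_1\cup\ldots\cup H_k$, verify that it satisfies (AH$_2$) by citing hyperbolicity of the relative Cayley graph together with its unboundedness and the acylindricity of the $G$-action on it (the paper cites \cite[Corollary 4.6]{Osi06b} and \cite[Proposition 5.2]{Osi16} for these), and then apply Proposition \ref{KG} to obtain $H\cong K(G)\times F_2$ with $H\h (G,A)$. The two proofs diverge only in how $H$ is then folded into the peripheral collection. The paper checks that $H$ is undistorted with respect to $A$ (Proposition \ref{undist}) and satisfies $|g^{-1}Hg\cap H|<\infty$ for all $g\in G\setminus H$ (Proposition \ref{malnorm}), and appeals to the main result of \cite{Osi06b}, which says precisely that such a subgroup can be adjoined to the peripheral structure. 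You instead invoke the transitivity property of hyperbolically embedded collections from \cite{DGO}: from $\Hk\h (G,X)$ and $H\h (G, X\sqcup H_1\sqcup\ldots\sqcup H_k)$ one concludes $\{ H, H_1,\ldots, H_k\}\h (G,X)$, and finiteness of $X$ then gives relative hyperbolicity directly from Definition \ref{rhdef}. Both routes are legitimate; yours stays entirely within the hyperbolically-embedded framework of \cite{DGO} and bypasses the undistortion/malnormality criterion, at the cost of relying on a transitivity theorem that the present paper never states. Your special-casing of finite peripheral subgroups is harmless but unnecessary, since the general results you cite already cover that situation.
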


\begin{proof}
By definition, $\Hk\h (G,X)$ for some finite set $X$. In particular, the Cayley graph $\Gamma (G,A)$ is hyperbolic, where $A=X\cup H_1\cup\ldots\cup H_k$. Under the assumptions of the corollary, $\Gamma (G,A)$ is known to be unbounded \cite[Corollary 4.6]{Osi06b} and the action of $G$ on $\Gamma (G, A)$ is known to be acylindrical \cite[Proposition 5.2]{Osi16}. Thus the generating set $A$ satisfies (AH$_2$). By Proposition \ref{KG}, there exists $H\h (G,A)$ such that $H\cong K(G)\times F_2$. The subgroup $H$ is undistorted in $G$ with respect to $A$ by Proposition \ref{undist} and satisfies $|g^{-1}Hg \cap H|<\infty $ for all $g\in G\setminus H$ by Proposition \ref{malnorm}. By the main result of \cite{Osi06b}, these properties imply that $G$ is hyperbolic relative to $\{H, H_1, \ldots, H_k\}$.
\end{proof}

\paragraph{4.3. Group theoretic Dehn filling.}
Our next goal is to discuss a group theoretic analogue of Thurston's theory of hyperbolic Dehn surgery. It will be used to show that the  subspaces of $\G$ shown on diagram (\ref{diag}) have no isolated points and that $\mathcal{LH}$, $\mathcal{LH}_0$, and $\mathcal{LH}_{tf}$ are homeomorphic to the subspace of irrational numbers in $\mathbb R$. Readers not interested in these results can safely skip Sections 4.3 and 4.4.

The classical Dehn surgery on a $3$-dimensional manifold consists of cutting of a solid torus, which may be thought of as ``drilling" along an embedded knot, and then gluing it back in a different way. The study of such transformations is partially motivated by the  Lickorish-Wallace theorem, which states that every closed orientable connected $3$-manifold can be obtained from the $3$-dimensional sphere by performing finitely many surgeries.

The second part of the surgery, called {\it Dehn filling}, can be formalized as follows. Let $M$ be a compact orientable 3-manifold with toric boundary. Topologically distinct ways of attaching a solid torus to $\partial M$ are parameterized by free homotopy classes of unoriented essential simple closed curves in $\partial M$, called {\it slopes}. For a slope $s$, the corresponding   Dehn filling  $M(s )$ of $M$ is the manifold obtained from $M$ by attaching a solid torus $\mathbb D^2\times \mathbb S^1$ to $\partial M$ so that the meridian
$\partial \mathbb D^2$ goes to a simple closed curve of the slope $s$. The fundamental theorem due to Thurston \cite[Theorem 1.6]{Th} asserts that if $M\setminus\partial M$ admits a complete finite volume hyperbolic structure, then $M(s)$ is hyperbolic for all but finitely many slopes.

Given a group $G$ and a subset $M\subseteq G$, we denote by $\ll M\rr $ the minimal normal subgroup of $G$ containing $M$. In view of Example \ref{RHex} (d) and Proposition \ref{rhh}, the theorem below can be regarded as an algebraic counterpart of Thurston's result.

\begin{thm}[Dahmani-Guirardel-Osin, {\cite[Theorem 2.27]{DGO}}]\label{CEP}
Let $G$ be a group, $\Hk$ a collection of subgroups of $G$. Suppose that $\Hk\h (G,X)$ for some $X\subseteq G$. Then for every finite subset $\mathcal K\subseteq G$, there exists a finite subset $\mathcal F=\mathcal F(\mathcal K)\subseteq G\setminus\{ 1\}$ such that for any subgroups $N_i\lhd H_i$ satisfying $N_i\cap \mathcal F=\emptyset$, the following statements hold.
\begin{enumerate}
\item[(a)] For every $i$, the natural map from $\widehat{H_i}=H_i/N_i$ to $\widehat{G}= G/\ll N_1\cup \ldots \cup N_k\rr $ is injective (equivalently, we have $H_i\cap \ll N_1\cup \ldots \cup N_k\rr =N_i$). In what follows, we think of $\widehat{H_i}$ as subgroups of $\widehat{G}$.

\item[(b)] $\{\widehat{H_1}, \ldots , \widehat{H_k}\} \h (\widehat{G}, \widehat {X})$, where $\widehat X$ is the natural image of $X$ in $\widehat{G}$. In particular, if $G$ is hyperbolic relative to $\Hk$, then $\widehat G$ is hyperbolic relative to $\{\widehat{H_1}, \ldots , \widehat{H_k}\}$.

\item[(c)] The natural homomorphism $G\to \widehat{G} $ is injective on $\mathcal K$.

\item[(d)] If $G$ and all $\widehat{H_i}$ are torsion-free, then so is $\widehat{G}$.
\end{enumerate}
\end{thm}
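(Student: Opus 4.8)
The plan is to realize $\widehat G = G/\ll N_1\cup\ldots\cup N_k\rr$ as the quotient of the hyperbolic Cayley graph $\Gamma(G,\mathcal A)$, where $\mathcal A = X\sqcup H_1\sqcup\ldots\sqcup H_k$, by a \emph{very rotating family}, and then to read off (a)--(d) from the structure theory of such families. First I would record the geometric input supplied by the hypothesis $\Hk\h(G,X)$: the graph $\Gamma(G,\mathcal A)$ is $\delta$-hyperbolic by Definition \ref{he-def}(a), the cosets $\{gH_i\}$ are undistorted (Proposition \ref{undist}) and hence uniformly quasiconvex, and Definition \ref{he-def}(b) together with Proposition \ref{malnorm} guarantees that distinct cosets have uniformly bounded coarse intersections, i.e.\ the projection of one coset onto another has bounded diameter. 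For each coset $c=gH_i$ I put $G_c=gN_ig^{-1}$; this subgroup stabilizes $c$ and acts on $\Gamma(G,\mathcal A)$, and $N:=\ll N_1\cup\ldots\cup N_k\rr$ is generated by the family $\{G_c\}$.

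The role of the finite set $\mathcal F=\mathcal F(\mathcal K)$ is to make the family $\{(c,G_c)\}$ \emph{very rotating}, i.e.\ to ensure that every nontrivial rotation moves points lying far from its apex by a large, controlled amount with no short returns. By Definition \ref{he-def}(b), for each threshold $n$ only finitely many $h\in H_i$ are joined to $1$ by a path of $\mathcal A$-length $\le n$ avoiding $\Gamma_{H_i}$; I would let $\mathcal F$ collect all such ``short'' elements for $n$ chosen large relative to $\delta$ and to $\max_{k\in\mathcal K}|k|_{\mathcal A}$. The hypothesis $N_i\cap\mathcal F=\emptyset$ then forces every nontrivial element of each $N_i$ to be long, which is exactly the metric content of the very-rotating condition. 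This step is the algebraic analogue of Thurston's requirement that one fills along all but finitely many slopes.

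With the very rotating family in hand, conclusions (a)--(d) follow from its structure theory. The normal closure decomposes as a free product $N=\ast\, G_c$ over a transversal of apices, and the stabilizer calculus yields $H_i\cap N=N_i$, which is statement (a); hence $\widehat{H_i}=H_i/N_i$ embeds in $\widehat G$. The quotient graph $\Gamma(G,\mathcal A)/N$ is again hyperbolic and is naturally the Cayley graph of $\widehat G$ with respect to $\widehat X\sqcup\widehat{H_1}\sqcup\ldots\sqcup\widehat{H_k}$; the short-path finiteness for $\widehat{H_i}$ is inherited from that for $H_i$, because short paths downstairs lift to short paths upstairs, giving $\{\widehat{H_1},\ldots,\widehat{H_k}\}\h(\widehat G,\widehat X)$ and hence (b), with the relative-hyperbolicity clause then immediate from Definition \ref{rhdef}. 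For (c), the minimal rotation length controls the injectivity radius of $G\to\widehat G$ on balls, and my choice of $\mathcal F$ was made precisely so that this radius exceeds $\max_{k\in\mathcal K}|k|_{\mathcal A}$, forcing injectivity on $\mathcal K$. Finally, the structure theorem expresses every torsion element of $\widehat G$ as a conjugate of a torsion element of some $\widehat{H_i}$ or of $G$; under the hypotheses of (d) both are trivial, so $\widehat G$ is torsion-free.

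The main obstacle is the first half: quantifying ``far-apart cosets plus long slopes'' into a genuine very rotating family through explicit quasiconvexity and bounded-projection estimates in $\Gamma(G,\mathcal A)$, and then proving the structure theorem (the free-product decomposition of $N$, hyperbolicity of the quotient, and the stabilizer and torsion calculus). An alternative route, closer in spirit to the small cancellation theory of Section 4.1, is to treat the symmetrized set $\{gng^{-1}\mid n\in N_i\setminus\{1\},\ g\in G\}$ as a relator family satisfying a $C'(\lambda)$-type condition relative to the peripheral structure, and to run a relative version of the Greendlinger Lemma \ref{Green} via van Kampen diagrams over $\Gamma(G,\mathcal A)$; the long-slope hypothesis again supplies the small-cancellation bound. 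Either way, the crux is the interaction between the hyperbolic geometry of $\Gamma(G,\mathcal A)$ and the condition $N_i\cap\mathcal F=\emptyset$.
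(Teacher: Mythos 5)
You should first be aware that the paper does not prove Theorem \ref{CEP} at all: it is imported verbatim from \cite{DGO} and used as a black box (the surrounding text says explicitly that the generalization to hyperbolically embedded subgroups ``is given in \cite{DGO}''). So there is no internal argument to compare yours against; the only meaningful comparison is with the proof in the cited reference, and at the level of strategy your outline does match it --- \cite{DGO} indeed deduce the theorem from a very rotating family built out of the filling kernels, together with a structure theorem (free product decomposition of $\ll N_1\cup \ldots \cup N_k\rr$, Greendlinger-type control of short elements, and a classification of elliptic and torsion elements) from which (a)--(d) are read off essentially as you describe.

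As a proof, however, your proposal has two genuine gaps, both of which you flag but neither of which is routine. First, the very rotating family cannot be set up on $\Gamma(G,\mathcal A)$ the way you describe: in that graph every coset $gH_i$ spans a complete subgraph of diameter one, so ``uniform quasiconvexity of the cosets'' and ``bounded coarse intersection of distinct cosets'' are vacuous there and do not encode the content of Definition \ref{he-def}(b); the relevant geometry lives in the relative metrics on the $H_i$ (path length avoiding $\Gamma_{H_i}$), and \cite{DGO} must pass to a coned-off space, gluing hyperbolic cones over the cosets, to obtain apices around which the conjugates $gN_ig^{-1}$ genuinely rotate. Relatedly, Proposition \ref{undist} asserts word-length undistortion of $H_i$ in $G$ and does not yield the quasiconvexity statement you attribute to it, and Proposition \ref{malnorm} is a consequence of $\Hk\h G$ rather than an input that by itself gives bounded projections. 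Second, the structure theorem for very rotating families --- the free product decomposition of the kernel, hyperbolicity of the quotient, the stabilizer calculus giving $H_i\cap \ll N_1\cup \ldots \cup N_k\rr=N_i$, the injectivity radius estimate behind (c), and the torsion classification behind (d) --- is the actual mathematical content of the theorem and occupies a substantial portion of \cite{DGO}; deferring it leaves the argument essentially where it started. Your alternative route via a relative $C'(\lambda)$ condition and a relative Greendlinger lemma is also viable (it is close in spirit to \cite{Osi07} and \cite{Osi10}), but it has the same status: the relative small cancellation machinery is precisely the hard part.
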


For relatively hyperbolic groups, this theorem was proved by the author in \cite{Osi07}; the particular case of torsion-free groups was independently obtained by Groves and Manning \cite{GM}. The generalization to hyperbolically embedded subgroups is given in \cite{DGO}.

\paragraph{4.4. Quotients of hyperbolic-like groups.}
We now prove two lemmas, which will be used later in the proofs of Theorem \ref{hyp} and \ref{LHgen}. Both lemmas are certainly implicit in the literature, and several particular cases have been already stated explicitly. The proofs are simple applications of small cancellation theory in groups acting on hyperbolic spaces. For hyperbolic groups, it  was developed by Olshanskii \cite{Ols} (following an insight by Gromov \cite{Gro} and the earlier work of Olshanskii on the geometric solution of the Burnside problem \cite{Ols82, Ols-book}) and then generalized to relatively and acylindrically groups by the author \cite{Osi10} and Hull \cite{Hull}. Readers unfamiliar with the methods of  \cite{Ols} and their generalizations in \cite{Hull,Osi10} can accept the results of this section as a ``black box".

By abuse of notation, we write $G\in \mathcal Z$ for a group $G$ and a subset $\mathcal Z\subseteq \G$ if $(G,A)\in \mathcal Z$ for some generating set $A$.

\begin{lem}\label{lemdense}
Let $\mathcal Z$ be one of the sets $ {\mathcal{H}}$, $ {\mathcal{H}_0}$, $ {\mathcal{H}_{tf}}$, $ {\mathcal{RH}}$, $ {\mathcal{RH}_0}$, $ {\mathcal{RH}_{tf}}$, $ {\mathcal{AH}}$, $ {\mathcal{AH}_0}$, $ {\mathcal{AH}_{tf}}$, and let $G\in \mathcal Z$. For every finite subset $\mathcal K$ of $G$, there exists a group $Q\in \mathcal Z$ and a non-injective epimorphism $\e\colon G\to Q$ such that the restriction $\e\vert_{\mathcal K}$ is injective.
\end{lem}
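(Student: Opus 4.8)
The plan is to realize $Q$ as a \emph{group-theoretic Dehn filling} of $G$ along a carefully chosen hyperbolically embedded subgroup, and to read off membership in $\mathcal Z$ from parts (a)--(d) of Theorem \ref{CEP} together with the structural results of Section 4.2. The first step is to produce a uniform ``room to quotient'' inside $G$, namely a hyperbolically embedded subgroup $H\cong K(G)\times F_2$. When $G\in\mathcal{AH}$ (or one of its subclasses) this is exactly Proposition \ref{KG}, applied to a generating set $A$ satisfying (AH$_2$); when $G\in\mathcal{RH}$ I would instead invoke Corollary \ref{F2RH} to enlarge the peripheral collection to $\{H,H_1,\dots,H_k\}$; and when $G\in\mathcal H$ I would use that Proposition \ref{KG} yields $H\h(G,A)$ with $A$ \emph{finite}, so that by Definition \ref{rhdef} the group $G$ is hyperbolic relative to $\{H\}$. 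In every case $H$ is a proper subgroup containing a free factor of rank two.

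Next I would run the filling. Applying Theorem \ref{CEP} to $\mathcal K$ gives a finite exceptional set $\mathcal F=\mathcal F(\mathcal K)\subseteq G\setminus\{1\}$, and I would then choose a nontrivial normal subgroup $N\lhd H$ supported in the free factor $F_2$, with $N\cap\mathcal F=\emptyset$ and $H/N$ infinite. Concretely, take $N$ to be the normal closure in $F_2$ of finitely many cyclically reduced words satisfying $C'(1/6)$ and not proper powers (for instance relators as in Example \ref{Wnk}), chosen longer than every element of $\mathcal F$; the Greendlinger Lemma \ref{Green} then forces each nontrivial element of $N$ to be long, which yields $N\cap\mathcal F=\emptyset$, while simultaneously $F_2/N$ is an infinite torsion-free hyperbolic group. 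Setting $Q=\widehat G=G/\ll N\rr$ with $\e$ the quotient map, non-injectivity is immediate from $N\neq\{1\}$, and injectivity of $\e|_{\mathcal K}$ is Theorem \ref{CEP}(c).

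The verification that $Q\in\mathcal Z$ then splits by class. For $\mathcal H$, the peripheral image $\widehat H=K(G)\times(F_2/N)$ is finite-by-hyperbolic, hence hyperbolic, so Theorem \ref{CEP}(b) and Proposition \ref{rhh} make $\widehat G$ hyperbolic; for $\mathcal{RH}$ the group $\widehat G$ is hyperbolic relative to $\{\widehat H,H_1,\dots,H_k\}$ (filling only $H$, i.e. $N_i=\{1\}$ otherwise); for $\mathcal{AH}$ condition (AH$_3$) applies directly to $\widehat H\h\widehat G$. In the torsion-free cases $K(G)=\{1\}$, so $H\cong F_2$ and $F_2/N$ is torsion-free, whence Theorem \ref{CEP}(d) gives $Q$ torsion-free. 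In the ``$\mathcal Z_0$'' cases again $H\cong F_2$, and since $\widehat H=F_2/N$ is a nontrivial torsion-free hyperbolically embedded subgroup of $Q$, Corollary \ref{K=1} shows $Q$ has no nontrivial finite normal subgroups.

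The main obstacle I anticipate is not any single computation but certifying that the filling remains \emph{non-elementary}: one must know that $\widehat H$ is simultaneously infinite and a \emph{proper} subgroup of $\widehat G$, so that $\widehat G$ is genuinely acylindrically hyperbolic rather than virtually cyclic or degenerate. Infiniteness is arranged by taking $N$ of infinite index in $F_2$; properness (equivalently $H\ll N\rr\neq G$, or, in the relative cases, Remark \ref{RHne} once one infinite peripheral is seen to be proper) is the delicate point, and it is precisely here that the deep-filling hypotheses and the small-cancellation theory surveyed in Section 4.4 do the essential work.
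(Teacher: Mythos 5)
Your proposal follows essentially the same route as the paper: Proposition \ref{KG} (or Corollary \ref{F2RH} in the relative case) to produce $H\cong K(G)\times F_2\h G$, a Dehn filling along a long $C'(1/6)$ relator in the $F_2$ factor so that Theorem \ref{CEP} applies with $N\cap\mathcal F(\mathcal K)=\emptyset$, and the same case analysis for membership of $\widehat G$ in $\mathcal Z$. The one place you diverge is the endgame: you treat properness of $\widehat H$ in $\widehat G$ as the ``delicate point'' still requiring deep input, whereas the paper simply dispenses with it by a dichotomy --- if $\widehat H\ne\widehat G$ then (AH$_3$) (or Remark \ref{RHne}) applies directly, and if $\widehat H=\widehat G$ then $\widehat G=H/N\cong (F_2/N)\times K(G)$ is itself non-elementary hyperbolic, hence still lies in $\mathcal Z$. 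For that second branch to work you need $F_2/N$ to be \emph{non-elementary} hyperbolic, not merely infinite as you state; your small-cancellation choice of $N$ does deliver this (as in Example \ref{Wnk}), so the fix is only to strengthen the wording, but as written ``infinite of infinite index'' is not the right certificate and no separate properness argument is needed at all.
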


For the spaces $ {\mathcal{H}_0}$, $ {\mathcal{H}_{tf}}$, $ {\mathcal{RH}_0}$, $ {\mathcal{RH}_{tf}}$, $ {\mathcal{AH}_0}$, and ${\mathcal{AH}_{tf}}$, this is an immediate consequence of the results of \cite{Ols,Osi10,Hull} on small cancellation theory in the respective classes of groups. In fact, for every fixed  $\mathcal K$, the quotient of $G$ obtained by adding a single ``random" relation (in any conceivable sense) will satisfy the required property almost surely.

In the case of groups with non-trivial finite normal subgroups, we have to be a bit more careful. Indeed, adding ``random" relations to $G$ may kill $K(G)$ with non-zero probability. However, the required quotient $Q$ can still be obtained by adding certain special relations. E.g., if $G\in \mathcal H$, then for any element $g$ of infinite order, there exists $m\in \NN$ such that $G/\ll g^m\rr$ is  non-elementary hyperbolic and the natural map $G\to G/\ll g^m\rr$ is injective on $\mathcal K$ \cite[Theorem 3]{Ols}. Unfortunately, the analogue of this theorem for $ {\mathcal{RH}}$  and $ {\mathcal{AH}}$ does not seem to have been stated explicitly in the literature, although its proof is straightforward using either small cancellation or group theoretic Dehn filling technique. For expository purpose, we provide a proof of Lemma \ref{lemdense} based on Dehn fillings, which works in all cases.

\begin{proof}
Let $G\in \mathcal{AH}$ and let $\mathcal K$ be a finite subset of $G$. By Proposition \ref{KG}, $G$ contains a hyperbolically embedded subgroup $H\cong F_2\times K(G)$. Let $\{ x,y\}$ be a basis of $F_2$ and let $\mathcal F=\mathcal F(\mathcal K)\subseteq G\setminus\{ 1\}$ be the finite set provided by Theorem \ref{CEP} applied to $G$ and $H$. It is easy to show that there is a non-trivial element $w\in F_2\le H$ such that the normal closure $N$ of $w$ in $H$ (which coincides with the normal closure of $w$ in $F_2$) avoids $\mathcal F$, and $F_2/N$ is torsion-free and non-elementary hyperbolic. E.g., we can take the element $$w=xyxy^2\cdots xy^{k}$$ for a sufficiently large $k$, see Example \ref{Wnk} and Lemma \ref{Green}. Note that $H/N \cong F_2/N \times K(G)$ is also non-elementary hyperbolic (see Example \ref{exhyp} (d)). By Theorem \ref{CEP}, the map $G\to \widehat{G}=G/\ll N\rr$ is injective on $\mathcal K$ and the group $\widehat H = H/N$ is hyperbolically embedded in $\widehat G$. If $\widehat H=\widehat{G}$, then $\widehat{G}$ is non-elementary hyperbolic and hence acylindrically hyperbolic. If $\widehat H\ne \widehat{G}$, then $\widehat{G}$ is acylindrically hyperbolic by definition (see condition (AH$_3$) in Theorem \ref{aa}).

If $G\in \mathcal {AH}_0$, then $K(G)=\{ 1\}$ and $\widehat H$ is a torsion-free group. Therefore, $\widehat G\in \mathcal {AH}_0$ by Corollary \ref{K=1}. Note also that if $G\in \mathcal{AH}_{tf}$ then $\widehat G\in \mathcal{AH}_{tf}$ by part (d) of Theorem \ref{CEP}.

Further, suppose that $G\in \mathcal{RH}$. Let $\Hk$ be a collection of proper subgroups of $G$ such that $G$ is hyperbolic relative to $\Hk$. By Corollary \ref{F2RH}, we can find $H\cong F_2\times K(G)$ such that $G$ is hyperbolic relative to $\{ H, H_1, \ldots, H_k\}$. That is,
\begin{equation}\label{HH}
\{ H, H_1, \ldots, H_k\}\h (G,X)
\end{equation}
for some finite subset $X\subseteq G$. We then choose $w$ as above so that Theorem \ref{CEP} applies to the hyperbolically embedded collection (\ref{HH}). Consider the filling of $G$ corresponding to the collection of subgroups $N=\ll w\rr \lhd H$ and $N_i=\{ 1\}\lhd H_i$. By Theorem \ref{CEP}, the resulting group $\widehat G$ is hyperbolic relative to $\{ H/N, H_1, \ldots, H_k\}$ in this case and $G\to \widehat{G}=G/\ll N\rr$ is injective on $\mathcal K$. The fact that $G$ is non-elementary relatively hyperbolic is obvious if $\widehat H=\widehat G$ and follows from Remark \ref{RHne} if $\widehat H\ne \widehat G$. In the cases $G\in \mathcal{RH}_0$ and  $G\in \mathcal{RH}_{tf}$, we argue as in the previous paragraph.

Finally, let $G\in \mathcal H$. We can think of this as a particular case of the situation considered in the previous paragraph with empty collection $\Hk$ (see Example \ref{RHex} (b)). In this case, $\widehat G$ is hyperbolic relative to $\widehat H$ and we have $G\in \mathcal H$ by Proposition \ref{rhh}. The cases $G\in \mathcal{H}_0$ and  $G\in \mathcal{H}_{tf}$ are treated as above.
\end{proof}

\begin{cor}\label{dense}
The spaces $ {\mathcal{H}}$, $ {\mathcal{H}_0}$, $ {\mathcal{H}_{tf}}$, $ {\mathcal{RH}}$, $ {\mathcal{RH}_0}$, $ {\mathcal{RH}_{tf}}$, $ {\mathcal{AH}}$, $ {\mathcal{AH}_0}$, and $ {\mathcal{AH}_{tf}}$ have no isolated points.
\end{cor}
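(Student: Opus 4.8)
The plan is to derive the corollary directly from Lemma \ref{lemdense} together with the description of the topology on $\G$ via the basic neighborhoods $U_{G,A}(r)$ from (\ref{UGA}). Fix one of the nine spaces $\mathcal Z$ and a marked group $(G,A)\in\mathcal Z$ with $A=(a_1,\ldots,a_n)$. To prove that $(G,A)$ is not isolated, it suffices to exhibit, for each $r\in\NN$, a marked group $(Q,B)\in\mathcal Z$ with $(Q,B)\in U_{G,A}(r)$ and $(Q,B)\ne(G,A)$; since $r$ is arbitrary, this shows that no basic neighborhood of $(G,A)$ reduces to the single point $(G,A)$.

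First I would choose the finite set to which Lemma \ref{lemdense} is applied. Let $\mathcal K=\{g\in G: |g|_A\le 2r+1\}$ be the ball of radius $2r+1$ about $1$ in $\Gamma(G,A)$. Lemma \ref{lemdense} then supplies a group $Q\in\mathcal Z$ and a non-injective epimorphism $\e\colon G\to Q$ whose restriction to $\mathcal K$ is injective. Set $B=(b_1,\ldots,b_n)$ with $b_i=\e(a_i)$; since $A$ generates $G$ and $\e$ is onto, $B$ is an $n$-tuple generating $Q$, so $(Q,B)\in\G_n\subseteq\G$. Because each of the spaces in question is determined by a property of the underlying abstract group, $Q\in\mathcal Z$ yields $(Q,B)\in\mathcal Z$ for this (or any) marking.

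Next I would verify that $(Q,B)\approx_r(G,A)$, i.e. that $(Q,B)\in U_{G,A}(r)$. The map $g\mapsto\e(g)$ sends the radius-$r$ ball of $\Gamma(G,A)$ onto the radius-$r$ ball of $\Gamma(Q,B)$ (every element of $B$-length $\le r$ is the $\e$-image of a geodesic word of $A$-length $\le r$) and carries the $a_i$-edge from $g$ to $ga_i$ to the $b_i$-edge from $\e(g)$ to $\e(g)b_i$. Injectivity of $\e$ on $\mathcal K$ makes this map a bijection on vertices: two elements of length $\le r$ differ by an element of length $\le 2r\in\mathcal K$, which lies in $\ker\e$ only if it is trivial. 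The same bound, now with words of length $\le 2r+1$ still inside $\mathcal K$, shows that the map both preserves and reflects edges. Hence the balls of radius $r$ are isomorphic as $A$/$B$-labelled directed graphs, which is precisely the condition defining $\approx_r$ in (\ref{UGA}).

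Finally I would check $(Q,B)\ne(G,A)$ in $\G$. By definition these coincide only if the assignment $a_i\mapsto b_i=\e(a_i)$ extends to an isomorphism $G\to Q$; but a homomorphism out of $G$ is determined by the images of the generating set $A$, so the only extension is $\e$ itself, which is not injective. Thus $(Q,B)\ne(G,A)$, and the argument is complete. The only genuinely non-routine ingredient is Lemma \ref{lemdense}; the main point to get right in the writeup is the elementary but fiddly bookkeeping translating injectivity of $\e$ on the ball of radius $2r+1$ into the $r$-similarity $(Q,B)\approx_r(G,A)$, which I expect to be the one place where a careless radius bound could cause trouble.
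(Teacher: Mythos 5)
Your argument is essentially the paper's own proof: apply Lemma \ref{lemdense} to a ball around the identity, mark $Q$ by $\e(A)$, and observe that the resulting marked group is $r$-similar to $(G,A)$ but distinct from it because $\e$ is non-injective. The only difference is that you take the ball of radius $2r+1$ where the paper takes radius $r$; your choice makes the edge-reflection step in the verification of $(Q,\e(A))\approx_r(G,A)$ completely airtight, and everything else matches.
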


\begin{proof}
Let $\mathcal Z$ be one of these spaces, let $(G,A)\in \mathcal Z$, and let $U$ be any neighborhood of $(G,A)$ in $\G$. By the definition of the topology on $\G$, there exists $r\in \NN$ such that every $(H,B)\in \G$ satisfying $(H,B)\approx_r(G,A)$ belongs to $U$. Let $$\mathcal F=\{ g\in G \mid |g|_A\le r\},$$ where $|\cdot |_A$ denotes the word length with respect to $A$, and let $Q\in \mathcal Z$ be the group provided by Lemma \ref{lemdense}. It is straightforward to see that $(Q, \e(A))\approx _r (G,A)$ and hence $(Q, \e(A))\in U$.
\end{proof}

\begin{lem}\label{Q}
Let $\mathcal Z$ be one of the sets  $ {\mathcal{H}_0}$, $ {\mathcal{H}_{tf}}$, $ {\mathcal{RH}_0}$, $ {\mathcal{RH}_{tf}}$, $ {\mathcal{AH}_0}$, or $ {\mathcal{AH}_{tf}}$. For any groups $G_1,G_2\in \mathcal Z$ and any finite subsets $\mathcal F_i\subseteq G_i$, there exist a group $Q\in \mathcal Z$ and epimorphisms $\e_i\colon G_i\to Q$ such that the restriction of $\e_i$ to $\mathcal F_i$ is injective for $i=1,2$.
\end{lem}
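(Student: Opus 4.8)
The plan is to obtain $Q$ as a common quotient of the free product $P=G_1\ast G_2$ in which carefully chosen relations force the image of \emph{each} factor to exhaust the whole group. I first record that $P$ again belongs to $\mathcal Z$: a free product of two non-elementary hyperbolic (respectively, relatively hyperbolic, acylindrically hyperbolic) groups is of the same type---for instance, $P$ is hyperbolic relative to the union of the peripheral collections of $G_1$ and $G_2$ (Example \ref{RHex} (c) and Proposition \ref{rhh})---and $P$ is torsion-free (respectively, has trivial finite radical) whenever $G_1,G_2$ are. I then equip $P$ with the relevant hyperbolic structure: a finite generating set in the hyperbolic case, the collection of proper peripheral subgroups in the relatively hyperbolic case, and the hyperbolically embedded free subgroup $H\cong F_2\h (P,\mathcal A)$ supplied by Proposition \ref{KG} (using $K(P)=\{1\}$) in the acylindrically hyperbolic case. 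The key feature shared by all three is that $G_1$ and $G_2$ are \emph{non-peripheral}, so that elements of $G_1$ and of $G_2$ can be arbitrarily long in the associated hyperbolic Cayley graph.

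Next I introduce the transfer relations. Let $a_1,\dots,a_m$ and $b_1,\dots,b_n$ be the given finite markings of $G_1$ and $G_2$. For each $b_j$ I choose a word $P_j$ in the generators of $G_1$ representing a long, generic element of $G_1$, and set $R_j=b_j^{-1}P_j$; symmetrically, for each $a_i$ I choose such a word $Q_i$ in the generators of $G_2$ and set $S_i=a_i^{-1}Q_i$. Since $P_j\in G_1$ and $Q_i\in G_2$ by construction, in the quotient $Q=P/\ll R_1,\dots,R_n,S_1,\dots,S_m\rr $ every $b_j$ becomes a word in the $a$'s and every $a_i$ a word in the $b$'s. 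Hence the images of $G_1$ and $G_2$ in $Q$ coincide and equal all of $Q$, so the two composites $\varepsilon_i\colon G_i\hookrightarrow P\twoheadrightarrow Q$ are epimorphisms, as required. Note that both families $\{R_j\}$ and $\{S_i\}$ are needed: the $R_j$ alone give only $\varepsilon_2(G_2)\subseteq\varepsilon_1(G_1)$.

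The technical core is to pick the words $P_j,Q_i$ so that the symmetrization of $\{R_j,S_i\}$ satisfies the small cancellation condition relative to the chosen structure in the sense of \cite{Ols,Osi10,Hull}; this plays the role that $w=xyxy^2\cdots xy^{k}$ and the Greendlinger Lemma \ref{Green} play in Lemma \ref{lemdense}. Taking the $P_j,Q_i$ long and generic makes all pieces short, and since each $R_j$ is concentrated in the $G_1$-letters while each $S_i$ is concentrated in the $G_2$-letters, cross-pieces between the two families are automatically short. The small cancellation theorems of \cite{Ols} (hyperbolic), \cite{Osi10} (relatively hyperbolic), and \cite{Hull} (acylindrically hyperbolic) then yield that $Q$ is again non-elementary hyperbolic, relatively hyperbolic, or acylindrically hyperbolic, and that $P\to Q$ is injective on any prescribed finite set; I build $\mathcal F_1\cup\mathcal F_2$ (together with the exceptional set coming from the Dehn filling control of Theorem \ref{CEP}) into this set to force injectivity of $\varepsilon_i$ on $\mathcal F_i$. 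Torsion-freeness is preserved as in Theorem \ref{CEP} (d), and the finite radical stays trivial because a torsion-free hyperbolically embedded subgroup survives in $Q$, so that Corollary \ref{K=1} applies; this places $Q$ in the exact subclass $\mathcal Z$.

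The main obstacle is the uniform verification of the small cancellation hypothesis across the three geometric settings. The delicate point is that the transfer words must \emph{simultaneously} represent elements of a single factor (needed for surjectivity) and be long, geodesic, and mutually nearly disjoint in the hyperbolic metric attached to the chosen structure (needed for small cancellation). This is possible precisely because $G_1,G_2$ are non-peripheral, but confirming that generic words in one factor are far from the identity and overlap only in short pieces---and that the resulting quotient retains the precise class membership, distinguishing $\mathcal Z_0$ from $\mathcal Z_{tf}$---is where the real work lies.
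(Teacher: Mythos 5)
Your overall strategy---pass to the free product $P=G_1\ast G_2$ and construct a common quotient $Q$ with $\e(G_1)=\e(G_2)=Q$ that is injective on $\mathcal F_1\cup\mathcal F_2$---is exactly the paper's, but where the paper finishes in one step by citing the ready-made common-quotient theorems (\cite[Theorem 2]{Ols} for hyperbolic groups, \cite[Theorem 2.4]{Osi10} for relatively hyperbolic groups, \cite[Theorem 1.5]{Hull} for acylindrically hyperbolic groups), you instead sketch the proof of those theorems via ``transfer relations'' and small cancellation. The hypothesis that makes the cited theorems applicable is that $G_1$ and $G_2$ are \emph{Gromov subgroups} (respectively, \emph{suitable subgroups}) of $P$, which holds because neither normalizes a non-trivial finite subgroup of $P$; this, rather than the auxiliary hyperbolically embedded copy of $F_2$ you invoke via Proposition \ref{KG}, is the structure relative to which the relators $b_j^{-1}P_j$ and $a_i^{-1}Q_i$ must be chosen (the $P_j$ must lie in $G_1$ and the $Q_i$ in $G_2$ for surjectivity, so it is $G_1$ and $G_2$ themselves that need to be suitable). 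Your appeal to Theorem \ref{CEP} for injectivity is out of place here---this quotient is not a Dehn filling---and the assertion that the transfer words can be chosen to satisfy the relevant small cancellation condition is precisely the content of the cited theorems, which you acknowledge but do not verify. Your argument becomes complete (and coincides with the paper's) once the sketch is replaced by a direct appeal to these packaged results; note also that the paper records explicitly how to guarantee $Q\in\mathcal Z_0$ (via part 9) of \cite[Theorem 1]{Min} in the hyperbolic case, and via the survival of a suitable subgroup in $Q$ in the relatively and acylindrically hyperbolic cases) and $Q\in\mathcal Z_{tf}$ (part (7) of \cite[Theorem 2]{Ols} and its analogues), points you address only in passing.
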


This result is also well-known to experts. In the case $\mathcal Z=\mathcal{H}_{tf}$, this lemma was proved by Champetier \cite{Cha}. The case $\mathcal Z=\mathcal{AH}_0$ is done in \cite[Corollary 7.4]{Hull}. For other classes, weaker versions (without the injectivity condition) were proved in \cite{Ols00} and \cite{AMO}. In fact, the injectivity condition follows from the same proofs; the only reason it was not stated explicitly in \cite{Ols00} and \cite{AMO} is that it was unnecessary for the applications considered there. We provide a brief proof for completeness.

\begin{proof}
We begin with the case $\mathcal Z= {\mathcal{H}_0}$. Let $G_1$, $G_2$ and $\mathcal F_1$, $\mathcal F_2$ be as above and let $P=G_1\ast G_2$. Then $P$ is hyperbolic and $G_1$ and $G_2$ do not normalize any non-trivial finite subgroup of $P$. The latter condition means that both $G_1$ and $G_2$ are Gromov subgroups of $P$ in the terminology of \cite{Ols}. By \cite[Theorem 2]{Ols}, there exists a non-elementary hyperbolic group $Q$ and an epimorphism $\e\colon P\to Q$ such that $\e\vert_{\mathcal F_1\cup \mathcal F_2}$ is injective and $\e(G_1)=\e(G_2)=Q$; in addition, we can ensure that $Q$ has no non-trivial finite normal subgroups by part 9) of \cite[Theorem 1]{Min}. Thus $Q\in \mathcal{H}_0$.  Dealing with $\mathcal Z= {\mathcal{H}_{tf}}$ is similar. In this case, part (7) of \cite[Theorem 2]{Ols}  guarantees that the group $Q$ can be made torsion-free and so the reference to \cite[Theorem 1]{Min} is unnecessary.

For $ {\mathcal{RH}_0}$, $ {\mathcal{RH}_{tf}}$, $ {\mathcal{AH}_0}$, and $ {\mathcal{AH}_{tf}}$, the proof is identical modulo the changes indicated below. The notion of a Gromov subgroup must be replaced with the more restrictive notion of a suitable subgroup, see \cite{AMO, Osi10} and \cite{Hull} for the definition in the relatively hyperbolic and acylindrically hyperbolic case, respectively. The reference to Olshanskii's theorem \cite[Theorem 2]{Ols} must be replaced with the references to \cite[Theorem 2.4]{Osi10} and \cite[Theorem 1.5]{Hull}, respectively. Finally, the reference to \cite[Theorem 1]{Min} for classes $\mathcal{RH}_0$ and $\mathcal {AH}_0$ can be eliminated. Indeed, \cite[Theorem 2.4]{Osi10} and \cite[Theorem 1.5]{Hull} imply that the obtained common quotient group $Q$ contains a suitable subgroup; in turn, this implies that $Q$ has no non-trivial finite normal subgroups (see \cite[Proposition 3.4]{AMO} for the relatively hyperbolic case and \cite[Definition 1.4]{Hull} for acylindrically hyperbolic groups).
\end{proof}

\section{The zero-one law for $\L_{\omega_1, \omega}$-sentences}

\paragraph{5.1. Complexity of $\L_{\omega_1, \omega}$-sentences and Borel hierarchy.}
Our proof of the zero-one law for finitely generated groups makes use of the observation that an isomorphism-invariant subset of $\G$ is Borel if and only if it is definable by an $\L_{\omega_1, \omega}$-sentence. This has long been known to experts and can be derived from the L\'opez-Escobar theorem for the space of countable structures (see \cite{LE} or \cite[Section 3.1]{Mar16}) via Proposition \ref{GXR}. For expository purpose, we choose a different approach and provide a direct proof (which is by no means original). In fact, we obtain a somewhat more precise result; to state it, we need to recall the definition of the Borel hierarchy.

Let $X$ be a topological space. For every countable ordinal $\alpha \ge 1$, we define the classes $\mathbf \Sigma_\alpha^0$ and $\mathbf \Pi_\alpha^0$ of Borel subsets of $X$ as follows.

\begin{enumerate}
\item[(a)] The class $\mathbf \Sigma_1^0$ consists of open sets.
\item[(b)] For $\alpha \ge 1$, a set is in $\mathbf \Pi_\alpha ^0$ if its complement is in $\mathbf \Sigma_\alpha^0$.
\item[(c)] For $\alpha > 1$, a set is in $\mathbf \Sigma_\alpha^0$ if it is a countable union of sets from $\bigcup_{\beta<\alpha} \mathbf \Pi^0_\beta$.
\end{enumerate}
In particular, $\mathbf \Pi_1^0$ consists of closed sets, $\mathbf \Sigma_2^0$ consists of $F_\sigma$-sets, and $\mathbf \Pi_2^0$ consists of $G_\delta$-sets. An easy transfinite induction shows that $\mathbf\Sigma_\alpha^0$ is closed under finite intersections (and countable unions) and $\mathbf\Pi_\alpha^0$ is closed under finite unions (and, of course, countable intersections).

\begin{prop}\label{Borel}
For every countable ordinal $\alpha \ge 1$, the following statements hold.
\begin{enumerate}
\item[(a)] Suppose that $\sigma $ is an $\L_{\omega_1,\omega}$-sentence of complexity class $\Sigma_\alpha$ or $\Pi_\alpha$. Then $\ModG(\sigma)$ is an isomorphism-invariant Borel subset of $\G$ of class $\mathbf\Sigma_\alpha^0$ or  $\mathbf \Pi_\alpha^0$ respectively.
\item[(b)] Conversely, for any isomorphism-invariant Borel set $X\subseteq \G$ of class $\mathbf\Sigma_{\alpha}^0$ or $\mathbf \Pi_{\alpha}^0$, there is an $\L_{\omega_1,\omega}$-sentence $\sigma$ such that $X={\rm Mod}_{\G} (\sigma)$.
\end{enumerate}
\end{prop}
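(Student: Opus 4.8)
I would prove both parts by transfinite induction on $\alpha$, using the identification $\G\cong\mathcal N(F_\infty)$ from Proposition \ref{GriCha} and the fact that, since $A$ generates $G$, every element of $G$ has the form $\e_{G,A}(w)$ for some $w\in F_\infty$; the basic clopen sets $W(Y,Z)$ provide the bridge between the topology and the atomic formulas of $\L$. For part (a) it is cleaner to strengthen the claim to formulas with free variables: for every $\phi(x_1,\dots,x_m)$ of class $\Sigma_\alpha$ (resp.\ $\Pi_\alpha$) and all $w_1,\dots,w_m\in F_\infty$, the set
$$
M(\phi;w_1,\dots,w_m)=\{(G,A)\in\G \mid G\models \phi(\e_{G,A}(w_1),\dots,\e_{G,A}(w_m))\}
$$
is of Borel class $\mathbf\Sigma_\alpha^0$ (resp.\ $\mathbf\Pi_\alpha^0$). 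Taking $m=0$ yields part (a), and isomorphism-invariance of $\ModG(\sigma)$ for a sentence $\sigma$ is immediate since the truth of $\sigma$ in $G$ depends only on the isomorphism type of $G$. The base case is the atomic formula $s=t$: evaluated at $\e_{G,A}(\bar w)$ it holds iff the word $u=s(\bar w)\,t(\bar w)^{-1}$ lies in $\ker\e_{G,A}$, so $M(s=t;\bar w)=W(\{u\},\emptyset)$ and $M(\lnot(s=t);\bar w)=W(\emptyset,\{u\})$ are clopen. The inductive step follows the recursive definition of the classes: a formula from $\bigcup_{\beta<\alpha}\Pi_\beta$ contributes a set in $\bigcup_{\beta<\alpha}\mathbf\Pi^0_\beta\subseteq\mathbf\Sigma^0_\alpha$; countable disjunctions and finite conjunctions become countable unions and finite intersections, under which $\mathbf\Sigma^0_\alpha$ is closed; and an existential quantifier gives
$$
M(\exists x\,\phi;\bar w)=\bigcup_{w\in F_\infty} M(\phi;w,\bar w),
$$
a countable union of sets of the same class (this is where surjectivity of $\e_{G,A}$ is used). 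The $\Pi_\alpha$ case is dual, with unions replaced by intersections and $\exists$ by $\forall$.

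\textbf{Part (b): membership formulas.} For an arbitrary Borel set $B\subseteq\G$ (not assumed invariant) of class $\mathbf\Sigma_\alpha^0$ or $\mathbf\Pi_\alpha^0$ and each $n\in\NN$, I would construct a formula $\psi_{B,n}(x_1,\dots,x_n)$ detecting membership along $\Gn$: for every $(G,(a_1,\dots,a_n))\in\Gn$ one has $G\models\psi_{B,n}(a_1,\dots,a_n)$ iff $(G,(a_1,\dots,a_n))\in B$. The base case is $W(Y,Z)$: since $\e_{G,A}$ kills $x_j$ for $j>n$ on $\Gn$, deleting these letters turns each $y\in Y$ into a term $\bar y(x_1,\dots,x_n)$, and one sets $\psi_{W(Y,Z),n}=\bigwedge_{y\in Y}(\bar y=1)\wedge\bigwedge_{z\in Z}\lnot(\bar z=1)\in\Sigma_0$. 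Countable unions translate into countable disjunctions, while complements are handled by the syntactic duality $\lnot\Sigma_\alpha\equiv\Pi_\alpha$ (an easy induction via de Morgan, matching set-complement with $\mathcal G_n\setminus B$). This yields $\psi_{B,n}\in\Sigma_\alpha$ for $B\in\mathbf\Sigma^0_\alpha$ and $\psi_{B,n}\in\Pi_\alpha$ for $B\in\mathbf\Pi^0_\alpha$.

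\textbf{Assembly and complexity.} Now suppose $X$ is isomorphism-invariant. The key point is that invariance forces $(G,A)\in X\iff [G]\cap X\neq\emptyset$, because $[G]\cap X\neq\emptyset$ implies $[G]\subseteq X$. Writing $\mathrm{gen}_n$ for the formula $\alpha$ of \eqref{alpha} expressing that $x_1,\dots,x_n$ generate, I would set
$$
\sigma=\bigvee_{n\in\NN}\exists x_1\cdots\exists x_n\big(\mathrm{gen}_n(x_1,\dots,x_n)\wedge\psi_{X,n}(x_1,\dots,x_n)\big).
$$
Then $G\models\sigma$ iff some generating tuple $(a_1,\dots,a_n)$ satisfies $(G,(a_1,\dots,a_n))\in X$, i.e.\ iff $[G]\subseteq X$, so $\ModG(\sigma)=X$. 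For the complexity bound, $\mathrm{gen}_n\in\Pi_2\subseteq\Sigma_3$ while $\psi_{X,n}\in\Sigma_\alpha\cup\Pi_\alpha\subseteq\Sigma_{\alpha+1}$; since $\Sigma_{\max\{\alpha+1,3\}}$ contains both $\Sigma_3$ and $\Sigma_{\alpha+1}$ and is closed under finite conjunctions, adding existential quantifiers, and countable disjunctions, $\sigma$ lies in $\Sigma_{\max\{\alpha+1,3\}}$.

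\textbf{Main obstacle.} The crux is part (b). The Borel structure of $\G$ is generated by the \emph{non-invariant} clopen sets $W(Y,Z)$, so each level of the hierarchy is described most naturally by formulas with free variables (the ``markings''), whereas the goal is a single \emph{invariant sentence} about the abstract group. Reconciling this is exactly the role of the generating predicate $\mathrm{gen}_n$ together with the identity $[G]\cap X\neq\emptyset\iff[G]\subseteq X$; the unavoidable cost of the $\Pi_2$ predicate $\mathrm{gen}_n$ and the leading block of existential quantifiers is precisely what produces the floor of $3$ in $\max\{\alpha+1,3\}$ — the same phenomenon that places Scott sentences (Theorem \ref{ScTh}) in $\Sigma_3$. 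The remaining work is careful bookkeeping to keep every construction inside the prescribed complexity class.
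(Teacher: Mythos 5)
Your proposal is correct and takes essentially the same route as the paper: your parameterized sets $M(\phi;w_1,\dots,w_m)$ and the identity $M(\exists x\,\phi;\bar w)=\bigcup_{w\in F_\infty}M(\phi;w,\bar w)$ are exactly the paper's non-invariant Lemma \ref{claim}, which it phrases via the expanded language $\mathcal L'$ with constants interpreted as the marked generators and the equivalences (\ref{eq1})--(\ref{eq2}). The assembly step for part (b) --- the sentence $\bigvee_{n}\exists x_1\cdots\exists x_n(\alpha_n\wedge\psi_{X,n})$, the use of isomorphism-invariance to pass from ``some generating tuple lands in $X$'' to membership, and the complexity count yielding $\Sigma_{\max\{\alpha+1,3\}}$ --- coincides with the paper's construction of $\xi=\bigvee_n\xi_n$.
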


\begin{rem}
Before proceeding with the proof, we note that it is not, in general, true that a subset $X\subseteq \G$ of Borel class $\mathbf \Sigma_\alpha^0$ or $\mathbf \Pi_\alpha^0$ is definable by an $\L_{\omega_1, \omega}$-sentence of complexity $\Sigma_\alpha$ (respectively, $\Pi_\alpha$). For example, let $K$ be any non-trivial finite group. The set $X=[K]$ is closed but cannot be defined by a $\Pi_1$-sentence as the set of models of every $\Pi_1$-sentence is closed under taking subgroups while $X$ is not. Similarly, it is easy to show that the set of all $2$-generated groups is open but cannot be defined by a $\Sigma_1$-sentence.
\end{rem}

\begin{proof}
Consider the language $\mathcal L^\prime$ obtained from $\mathcal L$ by adding constants $c_1, c_2, \ldots$. Given $(G,A)\in \G$, where $A=(a_1, \ldots, a_n)$, we interpret $c_i$ as $a_i$ for $1\le i\le n$ and as $1$ for $i>n$. Thus every $(G,A)\in \G$ becomes an $\mathcal L^\prime$-structure. We also denote by $\mathcal L^\prime _{\omega_1, \omega}$ the corresponding infinitary version of $\mathcal L^\prime$.

Let $T$ denote the set of all $\L^\prime $-terms that only involve constants $c_1, c_2, \ldots$. That is, $T$ is the set of all words in the alphabet $c_1, c_1^{-1}, c_2, c_2^{-1},\ldots $.  For every $\mathcal L^\prime_{\omega_1, \omega}$-formula $\phi (x)$ with one free variable $x$ and for every $(G,A)\in \G$, we obviously have
\begin{equation}\label{eq1}
(G,A) \models \forall x\, \phi(x) \;\; {\rm if\; and\; only\; if} \;\; (G,A) \models \bigwedge_{t\in T} \phi (t)
\end{equation}
and
\begin{equation}\label{eq2}
(G,A) \models \exists x\, \phi(x) \;\; {\rm if\; and\; only\; if}\;\; (G,A) \models \bigvee_{t\in T} \phi (t)
\end{equation}
since $G$ is generated by $A$.

We first prove the following ``non-invariant version" of Proposition \ref{Borel}. For convenience, we denote by $\mathbf\Sigma_0^0= \mathbf\Pi_0^0$ the set of all clopen subsets of $\G$. 

\begin{lem} \label{claim}
Let $\alpha \ge 0$ be a countable ordinal. Suppose that $\sigma $ is an $\L^\prime_{\omega_1,\omega}$-sentence of complexity $\Sigma_\alpha$ (respectively, $\Pi_\alpha$). Then ${\rm Mod}_{\G} (\sigma)$ ia a Borel subset of $\G$ and is in the class $\mathbf\Sigma_\alpha^0$ (respectively, $\mathbf \Pi_\alpha^0$). Conversely, for every Borel subset $X\subseteq \G$, there exists an $\L^\prime_{\omega_1,\omega}$-sentence $\sigma$ such that $X={\rm Mod}_{\G} (\sigma)$.
\end{lem}

\begin{proof}
To prove the first claim, we proceed by induction on $\alpha$. Let $\sigma $ be an $\L^\prime_{\omega_1,\omega}$-sentence. We note that $\sigma \in \Sigma_0=\Pi_0$ if and only if $\sigma$ is a Boolean combination of finitely many atomic formulas of the form $t_1=t_2$ for some $t_1, t_2\in T$. It is easy to see that $\ModG (\sigma)$ is clopen this case. Assume now that $\alpha >0$ and $\sigma \in \Pi _\alpha$. By the definition of $\Pi _\alpha$, $\sigma $ is obtained from formulas in the set $\bigcup_{\beta <\alpha}\Sigma _\beta$ by taking countable conjunctions, finite disjunctions,  and adding universal quantifiers. By (\ref{eq1}) and the inductive assumption, we have $\ModG (\sigma)\in \mathbf \Pi_\alpha^0$. Similarly, we can do the inductive step for $\sigma \in \Sigma _\alpha$ using (\ref{eq2}). 

To prove the second claim, we first recall that, in a separable (equivalently, second countable) metric space, every open set is a countable union of open balls. Applying this to $(\G, \d)$, where $\d$ is defined by (\ref{Eq:defdG}), we can easily derive that every open set of $G$ is a countable union of sets $U_{(G,A)}(r)=\{(H,B)\in \G \mid (H,B)\approx_r(G,A)\}$ for some $(G,A)\in \G$ and $r\in \NN$. 

Furthermore, every $U_{(G,A)}(r)$ can be defined by a countable conjunction of atomic formulas. Indeed, if $A=(a_1, \ldots, a_n)$, we can take the conjunction of all atomic formulas $t=1$ and $s\ne 1$ involving only variables $c_1, \ldots, c_n$ such that the corresponding relations and non-relations between elements $\{ a_1, \ldots, a_n\}$ can be ``seen" in the ball of radius $r$ around the identity in the Cayley graph $\Gamma(G, A)$; in addition, we add all relations of the form $c_i=1$ for $i>n$. Therefore, every open subset of $\G$ can be defined by an $\L^\prime_{\omega_1,\omega}$-sentence. The claim for every Borel subset follows from this by induction using (\ref{eq1}) and (\ref{eq2}).
\end{proof}

We now return to the proof of the proposition. Let $\sigma $ be an $\L_{\omega_1, \omega}$-formula of class $\Sigma_\alpha$ (respectively, $\Pi_\alpha$). Clearly, $\sigma$ can also be thought of as an $\L^\prime _{\omega_1, \omega}$-formula of the same complexity class and $\ModG(\sigma)$ is isomorphism-invariant. Thus part (a) of Proposition \ref{Borel} follows immediately from Lemma \ref{claim}.

To prove part (b), let $X$ be an isomorphism-invariant Borel subset of $\G$. By Lemma \ref{claim}, there is an $\L^\prime_{\omega_1, \omega}$-sentence $\sigma$ such that $X=\ModG(\sigma)$. We  denote by $\sigma_n$ the $\L_{\omega_1, \omega}$--formula obtained from $\sigma $ by replacing all the occurrences of the constants $c_i$ with the variables $x_i$ for $i\le n$ and with the constant $1$ for $i>n$. Further, let
$\xi_n= \exists\, x_1\;\ldots\; \exists\, x_n\; (\alpha_n\wedge\sigma_n)$,
where $\alpha_n=\alpha (x_1, \ldots, x_n)$ is the formula defined by (\ref{alpha}). Recall that $\alpha_n$ means that $\{ x_1, \ldots, x_n\}$ is a generating set. For every $(G,A)\in X$ we have $(G,A)\in \G_n$ for some $n$. Therefore, $G\models \xi_n$. This shows that $X\subseteq \ModG(\xi)$ for $\xi=\bigvee_{n\in\NN}\xi_n$. Conversely, if $G\models \xi$, then $G$ satisfies $\xi_n$ for some $n$. That is, there exists a generating set $A$ of size $n$ such that $(G,A)\in \ModG(\sigma)=X$. Thus $X= \ModG(\xi)$. 
\end{proof}

\paragraph{5.2. Proof of Theorem \ref{01}.}
For some applications, it is useful to reformulate the zero-one law in terms of a ``generalized elementary equivalence".  Let $F$ be a theory in $\L_{\omega_1, \omega}$ (i.e., $F$ is simply a set of $\L_{\omega_1,\omega}$-sentences). We say that two groups $G$ and $H$ are \emph{$F$-equivalent} if $G$ and $H$ satisfy exactly the same sentences from $F$. In particular, taking $F$ to be the first-order logic we obtain the definition of the standard elementary equivalence.

We are now ready to prove a result incorporating both Theorem \ref{01} and Proposition~\ref{Th-gen}.

\begin{thm}\label{01full}
For any isomorphism-invariant closed subspace $\S\subseteq \G$, the following conditions are equivalent.
\begin{enumerate}
\item[(a)] For any non-empty open sets $U$, $V$ in $\S$, there is a finitely generated group $G$ such that $[G]\cap U\ne \emptyset$ and $[G]\cap V\ne \emptyset$.
\item[(b)] There exists a finitely generated group $G$ such that ${[G]}$ is dense in $\S$.
\item[(c)] The set of marked groups $(G,A)\in \S$ such that $\overline{[G]}=\S$ is comeager in $\S$.
\item[(d)] $\S $ satisfies the zero-one law for $\mathcal L_{\omega_1, \omega}$-sentences.
\item[(e)] For any countable $\L_{\omega_1, \omega}$-theory $F$, $\S$ contains a comeager $F$-equivalence class.
\end{enumerate}
\end{thm}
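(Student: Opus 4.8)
The plan is to recognize that condition (a) is precisely a reformulation of \emph{topological transitivity} of the natural $\Aut$-action on $\S$, and then to route all remaining implications through the topological and logical machinery already assembled. By Lemma \ref{Aut}, the isomorphism classes $[G]$ are exactly the orbits of the $\Aut$-action and $\Aut$ acts by homeomorphisms; since $\S$ is closed and isomorphism-invariant, it is a union of orbits and a Polish (hence Baire) space on which $\Aut$ acts by homeomorphisms. For non-empty open $U,V\subseteq\S$, a point $y\in g(U)\cap V$ has $g^{-1}y\in U$ with $g^{-1}y$ and $y$ in the same orbit, so the isomorphism class of $y$ meets both $U$ and $V$; conversely, if $[G]$ meets $U$ at $x$ and $V$ at $y=g(x)$, then $y\in g(U)\cap V$. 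Thus (a) holds if and only if the $\Aut$-action on $\S$ is topologically transitive. We may assume $\S\ne\emptyset$, as otherwise all conditions hold vacuously.

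First I would establish the equivalence of (a), (b), and (c). For (b)$\Rightarrow$(a), a dense isomorphism class meets every non-empty open set, in particular both $U$ and $V$. For (a)$\Rightarrow$(c), topological transitivity together with Lemma \ref{tt} shows that $\{x\in\S : \overline{\Aut\cdot x}=\S\}$ is comeager, and since $\Aut\cdot(G,A)=[G]$ this is exactly the set in (c). For (c)$\Rightarrow$(b), a comeager subset of a non-empty Baire space is non-empty by (B$_1$), so some $(G,A)$ satisfies $\overline{[G]}=\S$.

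Next I would connect (a) and (d). For (a)$\Rightarrow$(d), given an $\L_{\omega_1,\omega}$-sentence $\sigma$, Proposition \ref{Borel}(a) shows $\ModG(\sigma)$ is an isomorphism-invariant Borel set, so $\ModS(\sigma)=\ModG(\sigma)\cap\S$ is an $\Aut$-invariant Borel subset of the Baire space $\S$, and Theorem \ref{01top} forces it to be meager or comeager. For (d)$\Rightarrow$(a) I would argue contrapositively: if (a) fails, choose non-empty open $U,V$ with $g(U)\cap V=\emptyset$ for all $g$, and shrink $U$ to a basic clopen set $U_0=W(Y,Z)\cap\S$. The $\Aut$-saturation $W'=\bigcup_{g\in\Aut}g(W(Y,Z))\subseteq\G$ is isomorphism-invariant and \emph{open}, hence of class $\mathbf\Sigma_1^0$, so by Proposition \ref{Borel}(b) it equals $\ModG(\sigma)$ for some sentence $\sigma$. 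Since $\S$ is isomorphism-invariant, $\ModS(\sigma)=W'\cap\S=\bigcup_{g}g(U_0)$ is a non-empty open subset of $\S$ that misses $V$; being open and non-dense, it is non-meager by (B$_2$) yet not comeager, so the zero-one law fails. This is the step I expect to be the main obstacle, since one must exhibit a definable set that is neither meager nor comeager, and the point that makes it go through is that the saturation is \emph{open} and therefore covered by Proposition \ref{Borel}(b).

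Finally I would handle (e). For (d)$\Rightarrow$(e), given a countable theory $F$, the zero-one law together with the fact that $\S$ cannot be covered by two meager sets lets me replace each $\sigma\in F$ by whichever of $\sigma,\lnot\sigma$ is generic; the common models of this countable family form a comeager set (a countable intersection of comeager sets), and any two of its members agree on every sentence of $F$, so it lies in a single $F$-equivalence class, which is therefore comeager. For (e)$\Rightarrow$(d), I apply (e) to $F=\{\sigma\}$: the resulting comeager $F$-equivalence class $E$ either entirely satisfies $\sigma$, whence $\ModS(\sigma)\supseteq E$ is comeager, or entirely satisfies $\lnot\sigma$, whence $\ModS(\sigma)\subseteq\S\setminus E$ is meager. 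Assembling the implications (b)$\Rightarrow$(a)$\Rightarrow$(c)$\Rightarrow$(b), (a)$\Leftrightarrow$(d), and (d)$\Leftrightarrow$(e) then yields the full equivalence.
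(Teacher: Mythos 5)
Your proof is correct and follows essentially the same route as the paper: (a)$\Leftrightarrow$(b)$\Leftrightarrow$(c) via Lemma \ref{tt}, (a)$\Rightarrow$(d) via Proposition \ref{Borel}(a) and Theorem \ref{01top}, and the closing of the cycle via Proposition \ref{Borel}(b) applied to the open $\Aut$-saturation of an open set. The only (immaterial) difference is the arrangement of the last implications --- the paper closes the loop as (d)$\Rightarrow$(e)$\Rightarrow$(a), whereas you prove (d)$\Rightarrow$(a) contrapositively and handle (e)$\Leftrightarrow$(d) separately --- but the key definability argument is identical, and your shrinking of $U$ to a basic clopen set is harmless but unnecessary since any open saturation is already of class $\mathbf\Sigma_1^0$.
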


\begin{proof}
Since $\S$ is closed in $\G$, $\S$ is a Polish space. In particular, it is a Baire space.

We first assume that (a) holds. By our assumptions and Lemma \ref{Aut}, $\S$ is $\Aut $-invariant and the action of $\Aut $ on $\S$ is topologically transitive. Thus (c) follows from Lemma \ref{tt}. Since $\S$ is a Baire space, every comeager subset of $\S$ is non-empty and we obtain (b). Obviously, (b) implies (a). Thus conditions (a), (b), and (c) are equivalent.

Further, we prove that (a) implies (d). Note that for any $\L_{\omega_1, \omega}$-sentence $\sigma$, the set $\ModS(\sigma)$ is $\Aut$-invariant and Borel by Proposition \ref{Borel}. Applying Theorem \ref{01top} to the action of $\Aut $ on $\S$ we obtain that $\ModS(\sigma)$ is either meager or comeager.

Now suppose that (d) holds. We repeat the argument used in the introduction to prove Proposition \ref{Th-gen}. Fix a countable theory $F\subseteq \L_{\omega_1, \omega}$ and define
$$
\overline{F}=F\cup \{ \lnot \sigma \mid \sigma \in F\}
$$
and
$$
Th^{gen}_{\overline{F}}(\S)=\{ \sigma \in \overline{F}\mid \ModS(\sigma) {\rm \; is \; comeager \; in \; } \S\}.
$$
Since $\overline{F}$ is countable and any countable intersection of comeager sets is again comeager, $Th^{gen}_{\overline{F}}(\S)$ has a comeager set of models in $\S$. We will show that any two models of $Th^{gen}_{\overline{F}}(\S)$ are $F$-equivalent, thus proving (e). Arguing by contradiction, assume that there are models $G$ and $H$ of $Th^{gen}_{\overline{F}}(\S)$ and $\sigma \in F$ such that $G\models \sigma $ while $H\not\models \sigma$. We obviously have $${\rm Mod}_\S(\sigma)\cup {\rm Mod}_\S(\lnot\sigma)=\S.$$ Since $\S$ is a Baire space,  it cannot be covered by a union of two meager sets. Together with the zero-one law, this implies that either $\sigma\in Th^{gen}_{\overline{F}}(\S)$ or $\lnot\sigma \in Th^{gen}_{\overline{F}}(\S)$. Both cases contradict the assumption that $G$ and $H$ are models of $Th^{gen}_{\overline{F}}(\S)$.

Finally, we show that (e) implies (a). Let $U$, $V$ be any non-empty open sets in $\S$. Then $U=U_0\cap \S$ for some open $U_0\subseteq \G$. The set $$W=\bigcup_{a\in \Aut} aU_0$$ is open in $\G$ and $\Aut$-invariant. By Proposition \ref{Borel}, $W=\ModG (\sigma)$ for some $\L_{\omega_1, \omega}$-sentence $\sigma$. By condition (e) applied to $F=\{ \sigma\}$, the set $\ModS(\sigma) =W\cap \S$ is either meager or comeager in $\S$. It cannot be meager as $U\subseteq \ModS(\sigma)$ (see property (B$_2$) after the definition of a Baire space).  Hence, $\ModS(\sigma)$ is comeager in $\S$. In particular, it is dense in $\S$ and we have $\ModS(\sigma)\cap V\ne \emptyset$. It follows that there exists $a\in \Aut$ such that $aU\cap V\ne \emptyset$, which is equivalent to (a).
\end{proof}

We mention one particular corollary of the zero-one law, which makes use of the full strength of the $\L_{\omega_1,\omega}$-logic (for an applications, see Proposition \ref{Nek}).

\begin{cor}\label{profin}
Suppose that $\S$ is a closed isomorphism-invariant subspace of $\G$ satisfying the zero-one law for $\L_{\omega_1, \omega}$-sentences.  Then $\S$ contains a comeager subset of groups with isomorphic profinite completions.
\end{cor}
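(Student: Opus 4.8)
The plan is to exploit the well-known fact that, for finitely generated groups, the isomorphism type of the profinite completion is determined by the collection of finite quotients: two finitely generated groups $G$ and $H$ satisfy $\widehat G\cong \widehat H$ if and only if a finite group $Q$ is a quotient of $G$ precisely when it is a quotient of $H$. Since every group occurring in $\G$ is finitely generated and there are only countably many isomorphism types of finite groups, it therefore suffices to produce a comeager subset of $\S$ on which, for every finite group $Q$, the predicate ``$Q$ is a quotient'' takes a constant truth value. The zero-one law is tailor-made for this: once each such predicate is shown to be the set of models of an $\L_{\omega_1,\omega}$-sentence, it must be either meager or comeager in $\S$, and a suitable countable intersection of comeager sets will do the job.

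First I would fix a finite group $Q$ and consider $S_Q=\{(G,A)\in\G\mid G\twoheadrightarrow Q\}$, the set of marked groups whose underlying group surjects onto $Q$. This set is clearly isomorphism-invariant, and I claim it is Borel. Indeed, within $\Gn$ a marked group $(G,A)$ with $A=(a_1,\dots,a_n)$ surjects onto $Q$ if and only if there is a generating tuple $(s_1,\dots,s_n)$ of $Q$ such that $a_i\mapsto s_i$ respects all relations, i.e.\ $w(a_1,\dots,a_n)=1$ implies $w(s_1,\dots,s_n)=1$ for every word $w$. For a fixed tuple $\bar s=(s_1,\dots,s_n)$ the latter condition describes the set $\bigcap_{w:\,w(\bar s)\ne 1}\{(G,A)\mid w(\bar a)\ne 1\}$, a countable intersection of clopen sets (recall that $\{N\mid w\notin N\}$ is clopen in $\mathcal N(F_\infty)$) and hence closed in $\Gn$; taking the finite union over the finitely many generating tuples in $Q^n$ preserves closedness. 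Thus $S_Q\cap\Gn$ is closed for every $n$, so $S_Q$ is an $F_\sigma$, hence Borel, isomorphism-invariant subset of $\G$. By Proposition \ref{Borel}(b) there is an $\L_{\omega_1,\omega}$-sentence $\sigma_Q$ with $\ModG(\sigma_Q)=S_Q$.

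Now I would apply the zero-one law. For each finite group $Q$ the set $\ModS(\sigma_Q)$ is meager or comeager in $\S$; let $\mathcal P$ be the (countable) set of isomorphism types $Q$ for which it is comeager, and put
$$
\mathcal C=\bigcap_{Q\in\mathcal P}\ModS(\sigma_Q)\ \cap\ \bigcap_{Q\notin\mathcal P}\ModS(\lnot\sigma_Q).
$$
Each set in this intersection is comeager (for $Q\notin\mathcal P$ the sentence $\sigma_Q$ is meager, so $\lnot\sigma_Q$ is comeager), and the index set is countable, so $\mathcal C$ is comeager in $\S$. By construction every $(G,A)\in\mathcal C$ satisfies $\{Q\mid G\twoheadrightarrow Q\}=\mathcal P$; in particular all groups in $\mathcal C$ have exactly the same finite quotients and therefore isomorphic profinite completions.

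I expect the only genuinely nontrivial ingredient to be the algebraic fact invoked at the outset and at the end, that having the same finite quotients forces isomorphic profinite completions. Its forward direction is immediate, since the finite continuous quotients of $\widehat G$ correspond to the finite quotients of $G$; the converse relies on finite generation and is a standard result in the theory of profinite completions. Everything else is bookkeeping, the two points needing care being the Borelness of $S_Q$ (handled above via the clopen description of $w(\bar a)\ne1$) and the observation that only countably many finite groups $Q$, and hence only countably many sentences $\sigma_Q$, enter, so that $\mathcal C$ is a legitimate countable intersection of comeager sets.
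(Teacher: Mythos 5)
Your proposal is correct and follows essentially the same route as the paper: one $\L_{\omega_1,\omega}$-sentence per finite group $Q$ expressing ``$Q$ is a quotient,'' the zero-one law plus a countable intersection of comeager sets, and the Dixon--Formanek--Poland--Ribes theorem to pass from equal finite quotients to isomorphic profinite completions. The only cosmetic difference is that the paper asserts the sentence $\phi_K$ directly and invokes condition (e) of Theorem \ref{01full}, whereas you obtain the sentence via the Borelness of $S_Q$ and Proposition \ref{Borel}(b) and then redo the countable-intersection argument by hand; both steps are sound.
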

\begin{proof}
It is easy to see that for every finite group $K$, there exists an $\L_{\omega_1, \omega}$-sentence $\phi_K$ such that $G\models \phi_K$ for a finitely generated group $G$ if and only if $K$ is a quotient of $G$\footnote{The first order logic is not sufficient here. Indeed, the free groups of rank $2$ and $3$ are elementarily equivalent \cite{KM,Sel06} but have distinct sets of finite quotients.}. Let $F=\{ \phi_K\}$, where $K$ ranges over the set of all finite groups. By condition (e) from Theorem \ref{01full}, we conclude that generic groups from $\S$ have the same set of finite images. For finitely generated groups, this property is equivalent to having isomorphic profinite completions by the classical theorem of Dixon, Formanek, Poland, and Ribes \cite{DFPR}.
\end{proof}

\paragraph{5.3. Generic properties.} The question of which sentences hold generically in $\S$ in the settings of Theorem \ref{01} is rather non-trivial. We begin by an example showing that the ``naive" attempt to answer this question does not work.

\begin{ex}\label{Sc}
Let $G$ be a finitely generated group. One might expect that an $\L_{\omega_1,\omega}$-sentence holds generically in $\overline{[G]}$ if and only if it holds for $G$. In general, this is false. Indeed, let $G$ be a condensed group. By Theorem \ref{ScTh}, there exists $\sigma \in \L_{\omega_1, \omega}$ such that $\ModS(\sigma)=[G]$. However, $[G]$ is countable and, therefore, cannot be comeager in $\S$.
\end{ex}

On the other hand, we have the following elementary corollary of Proposition \ref{Borel}.

\begin{cor}\label{Pi2}
Let $G$ be a finitely generated group.
\begin{enumerate}
\item[(a)] If $G$ satisfies an $\L_{\omega_1,\omega}$-sentence $\sigma$ of complexity  $\Pi_1$, then $\sigma$ holds for every group from $\overline{[G]}$.
\item[(b)] If $G$ satisfies an $\L_{\omega_1,\omega}$-sentence $\sigma$ of complexity $\Pi_2$, then $\sigma $ holds generically in $\overline{[G]}$.
\end{enumerate}
\end{cor}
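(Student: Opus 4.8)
The plan is to read off both parts directly from the complexity computation in Proposition~\ref{Borel}(a), which converts the syntactic complexity class of $\sigma$ into the Borel complexity of $\ModG(\sigma)$, and then to invoke elementary facts about closed, $G_\delta$, and dense subsets. For part (a), I would first apply Proposition~\ref{Borel}(a): since $\sigma$ is of complexity $\Pi_1$, the set $\ModG(\sigma)$ is an isomorphism-invariant subset of $\G$ of Borel class $\mathbf\Pi_1^0$, i.e.\ it is closed. Because $G\models\sigma$ and $\ModG(\sigma)$ is isomorphism-invariant, we have $[G]\subseteq\ModG(\sigma)$; passing to closures and using that $\ModG(\sigma)$ is already closed gives $\overline{[G]}\subseteq\ModG(\sigma)$. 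Hence every marked group in $\overline{[G]}$ satisfies $\sigma$. This part is pure bookkeeping and presents no obstacle.

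For part (b), Proposition~\ref{Borel}(a) now yields that $\ModG(\sigma)$ lies in $\mathbf\Pi_2^0$, so it is a $G_\delta$-subset of $\G$. Restricting to $\overline{[G]}$, the set of models $\ModG(\sigma)\cap\overline{[G]}$ is $G_\delta$ in the subspace topology of $\overline{[G]}$, and since $G\models\sigma$ forces $[G]\subseteq\ModG(\sigma)$ (isomorphism-invariance again), this set contains $[G]$, which is dense in $\overline{[G]}$ by definition of the closure. Thus $\ModG(\sigma)\cap\overline{[G]}$ is a \emph{dense} $G_\delta$-subset of $\overline{[G]}$.

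The one step requiring an argument rather than bookkeeping, and hence the main (if modest) obstacle, is the observation that a dense $G_\delta$-set is automatically comeager. Writing $\ModG(\sigma)\cap\overline{[G]}=\bigcap_{n\in\NN}U_n$ with each $U_n$ open in $\overline{[G]}$, density of the intersection forces each $U_n$, which contains it, to be dense; hence $\ModG(\sigma)\cap\overline{[G]}$ is a countable intersection of dense open sets, so its complement is a countable union of nowhere dense sets and therefore meager. Consequently $\sigma$ holds generically in $\overline{[G]}$, completing (b). It is worth noting that this last implication is purely topological and does not itself require $\overline{[G]}$ to be a Baire space; Baireness does hold, since $\overline{[G]}$ is a closed subspace of the Polish space $\G$, but it is needed only to guarantee that the resulting comeager set is nonempty and dense, not for the comeagerness itself.
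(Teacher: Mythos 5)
Your proposal is correct and follows essentially the same route as the paper: part (a) is the observation that $\ModG(\sigma)$ is closed (via Proposition \ref{Borel}) and contains $[G]$, hence contains $\overline{[G]}$; part (b) is the observation that $\ModG(\sigma)$ is a $G_\delta$-set containing the dense subset $[G]$ of $\overline{[G]}$, hence is a dense $G_\delta$ and therefore comeager there. The paper states this tersely, while you spell out the standard fact that a dense $G_\delta$ is comeager; nothing is missing.
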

\begin{proof}
Part (a) for $\Pi _1$-sentences follows immediately from the fact that for any such a sentence $\sigma$, the set $\ModG(\sigma)$ is closed. To prove (b) we note that  every $\Pi_2$-definable subset of $\G$ is a $G_\delta$-set by Proposition \ref{Borel}. Since $G\models \sigma$, the set ${\rm Mod}_{\overline{[G]}} (\sigma)$ is comeager in $\overline{[G]}$.
\end{proof}

Note that part (b) cannot be extended beyond $\Pi_2$ even if we restrict to the first-order logic. Indeed, methods of \cite{MO} can be used to show that there exists an acylindrically hyperbolic group $G$ such that  $\overline{[G]}=\overline{\mathcal{AH}}_{tf}$. However, a generic group in $\overline{\mathcal{AH}}_{tf}$ has $2$ conjugacy classes by \cite{Hull}. The latter property can be expressed by the $\forall\exists$-sentence
$$
\zeta = \forall\, a\; \forall\, b\; \exists \, t\; (a=1 \vee b=1 \vee t^{-1}at=b)
$$
and $\zeta$  is never satisfied by an acylindrically hyperbolic group (in fact, every acylindrically hyperbolic group has exponentially growing set of conjugacy classes \cite{HO}). It follows that the sentence $\lnot \zeta$ is satisfied by $G$ but does not belong to $Th^{gen}(\overline{[G]})$. Note that $\lnot \xi$ is equivalent to a $\exists\forall$-sentence. We leave details to the curious reader.

\section{Examples and applications}

\paragraph{6.1. Condensed groups.}
Our next goal is to discuss non-trivial instances of the zero-one law for $\L_{\omega_1,\omega}$-sentences. As explained in the introduction, this leads to the notion of a condensed group via the following immediate corollary of Proposition \ref{Aut}.

\begin{cor}\label{dych}
For every finitely generated group $G$, the isomorphism class $[G]$ is either discrete or has no isolated points.
\end{cor}

\begin{proof}
If $[G]$ is not discrete, there exists an accumulation point $(G,A)\in [G]$. Then every point of $[G]$ is an accumulation point since $[G]$ is the $Aut_f(F_\infty)$-orbit of $(G,A)$ and the action of $Aut_f(F_\infty)$ on $\G$ is continuous.
\end{proof}

We begin by proving Proposition \ref{no-cond}. In fact, we obtain a somewhat stronger result. Before stating it, we have to recall several definitions. A group $G$ is \emph{equationally Noetherian}, if every system of equations with parameters in $G$ has the same set of solutions as a finite subsystem \cite{BMR}. Examples of equationally Noetherian groups include linear groups over commutative, Noetherian, unital rings (e.g., fields) \cite{BMR}, finitely generated abelian-by-nilpotent groups \cite{Bry}, hyperbolic groups \cite{RW} and some of their generalizations (see \cite{GH} and references therein).

A group $G$ is \emph{residually finite} if for every non-trivial element $g\in G$, there is a homomorphism $\e\colon G\to K$ to a finite group $K$ such that $\e(G)\ne 1$. Every finitely generated linear group is residually finite by the classical result of Maltsev. A group $G$ is \emph{Hopfian} if every epimorphism $G\to G$ is injective. Finitely generated residually finite (or equationally Noetherian) groups also serve as main examples of Hopfian groups. A simple example of a group that fails to be equationally Noetherian, residually finite, and Hopfian is the Baumslag-Solitar group
$$
G=\langle a, b\mid b^{-1}a^2b=a^3\rangle .
$$
For more on this, we refer to \cite[Chapter IV, Section 4]{LS}.

We say that a group $G$ is \emph{extremely non-Hopfian} if for every finite subset $\mathcal F\subseteq G$, there exists an non-injective epimorphism $G\to G$ whose restriction to $\mathcal F$ is injective.

\begin{prop}\label{cond}
\begin{enumerate}
\item[(a)] A finitely generated equationally Noetherian group cannot be condensed. In particular, finitely generated linear and abelian-by-nilpotent groups are not condensed.
\item[(b)] A finitely presented condensed group is extremely non-Hopfian. In particular, a finitely presented residually finite group cannot be condensed.
\end{enumerate}
\end{prop}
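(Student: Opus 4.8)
The plan is to translate condensedness into a statement about generating tuples and then feed it into (a) equational Noetherianity and (b) the finite-presentation neighbourhood of Proposition~\ref{Quot}. Recall that by Corollary~\ref{dych}, $G$ is condensed if and only if $[G]$ has no isolated points; so if $G$ is condensed, then for every marking $(G,A)$ with $A=(a_1,\dots,a_n)$ and every $r\in\NN$ there is $(G,B)\in[G]$, $B=(b_1,\dots,b_n)$, with $(G,B)\approx_r(G,A)$ but $(G,B)\ne(G,A)$. (Once the matching radius is at least $1$, any nearby marking is forced to have its trailing generators equal to $1$, so I may assume $B$ has the same length $n$ as $A$.) The point I would exploit is that $(G,B)\approx_r(G,A)$ means exactly that a word $w$ with $\|w\|\le r$ satisfies $w(A)=1$ if and only if $w(B)=1$, while $(G,B)\ne(G,A)$ means that $a_i\mapsto b_i$ does not extend to an automorphism of $G$.

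For part (a), I would assume $G$ finitely generated and equationally Noetherian and fix a marking $A=(a_1,\dots,a_n)$. Let $R=\ker\e_{G,A}$, regarded as the coefficient-free system $\{\,w(x_1,\dots,x_n)=1 : w\in R\,\}$, whose solutions in $G$ are precisely the tuples inducing endomorphisms of $G\cong F_n/R$. By equational Noetherianity this system is equivalent to a finite subsystem $R_0\subseteq R$; set $r=\max\{\|w\| : w\in R_0\}$. I claim $(G,A)$ is isolated in $[G]$: if $(G,B)\in[G]$ and $(G,B)\approx_r(G,A)$, then each $w\in R_0$ has $\|w\|\le r$ and $w(A)=1$, so $w(B)=1$; hence $B$ solves $R_0$ and therefore all of $R$, so $a_i\mapsto b_i$ factors through $F_n/R$ and gives an epimorphism $\psi\colon G\to G$. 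Since finitely generated equationally Noetherian groups are Hopfian (as recalled above), $\psi$ is an automorphism and $(G,B)=(G,A)$. Thus $[G]$ is discrete and $G$ is not condensed. The ``in particular'' claims follow because finitely generated linear groups and finitely generated abelian-by-nilpotent groups are equationally Noetherian \cite{BMR,Bry}.

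For part (b), I would let $G$ be finitely presented and condensed, fix a finite presentation $\langle A\mid\mathcal R\rangle$ with $A=(a_1,\dots,a_n)$, let $\rho$ be the maximal relator length, and let $\mathcal F\subseteq G$ be an arbitrary finite subset. Fixing word representatives in $A^{\pm1}$ for the elements of $\mathcal F$, let $L$ bound their lengths and set $r=\max\{\rho,2L,1\}$. Using condensedness, I pick $(G,B)\in[G]$ with $(G,B)\approx_r(G,A)$ and $(G,B)\ne(G,A)$. Since $r\ge\max\{1,\rho\}$, Proposition~\ref{Quot} shows $a_i\mapsto b_i$ extends to an epimorphism $\psi\colon G\to G$ (the target being $G$ itself, generated by $B$). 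If $\psi$ were injective it would be an automorphism, forcing $(G,B)=(G,A)$; hence $\psi$ is non-injective. Finally $\psi$ is injective on $\mathcal F$: for distinct $f=u(A),f'=u'(A)\in\mathcal F$ one has $u(u')^{-1}(A)\ne1$ with $\|u(u')^{-1}\|\le 2L\le r$, so $u(u')^{-1}(B)\ne1$ by $\approx_r$, i.e. $\psi(f)\ne\psi(f')$. Thus $G$ is extremely non-Hopfian. The last assertion follows because a finitely presented residually finite group is Hopfian (being finitely generated and residually finite), whereas an extremely non-Hopfian group admits a non-injective self-epimorphism (take $\mathcal F=\emptyset$) and is therefore non-Hopfian.

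The only genuine external input is that finitely generated equationally Noetherian groups are Hopfian, and this is exactly what closes the loop in part (a): equational Noetherianity only forces a nearby marking $B$ to satisfy \emph{all} relations of $A$ once it satisfies the finitely many in $R_0$, and it is Hopficity that upgrades this to $A$ and $B$ being the same marked group. I expect this interplay to be the main conceptual point of part (a); the trailing-generator remark and the compatibility of $\approx_r$ with word length are routine. In part (b) finite presentability substitutes for equational Noetherianity, since Proposition~\ref{Quot} produces the epimorphism directly; there the only thing to arrange is a matching radius simultaneously large enough to guarantee surjectivity (via $\rho$) and injectivity on $\mathcal F$ (via $2L$), which is possible precisely because condensedness supplies proper approximations at every radius.
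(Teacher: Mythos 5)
Your proposal is correct, and part (a) takes a genuinely different route from the paper, while part (b) matches the paper's intended argument. For (a) the paper argues globally: an equationally Noetherian group $G$ has only countably many $G$-limit groups \cite{OH07}, so $\overline{[G]}$ is countable, whereas a condensed $G$ would make $\overline{[G]}$ a non-empty perfect Polish space, which is uncountable by the Baire category theorem. You instead argue locally: equational Noetherianity applied to the coefficient-free system $\ker\e_{G,A}$ yields a finite subsystem and hence an explicit radius $r$ such that any tuple $B$ with $(G,B)\approx_r(G,A)$ satisfies \emph{all} relations of $A$, producing a surjective endomorphism $a_i\mapsto b_i$, which Hopficity of finitely generated equationally Noetherian groups (a fact the paper recalls but does not actually use in its proof) upgrades to an automorphism; thus every marking of $G$ is isolated in $[G]$. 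Your version is more elementary and more quantitative -- it avoids limit groups and Baire category altogether and exhibits an isolating radius for each marking -- at the price of invoking Hopficity, whereas the paper's one-line argument outsources everything to the countability of the set of $G$-limit groups. (The two external inputs are of comparable depth; indeed your chain-of-kernels use of equational Noetherianity is essentially the standard proof of that Hopficity statement, so your argument is arguably the more self-contained of the two.) For (b) the paper only says the claim ``follows easily from Proposition \ref{Quot} and the definition of a condensed group''; your choice of $r\ge\max\{\rho,2L,1\}$, with $\rho$ forcing surjectivity of $a_i\mapsto b_i$ via Proposition \ref{Quot} and $2L$ forcing injectivity on $\mathcal F$ via the ball comparison, together with the Mal'cev/Hopf argument for the residually finite case, is exactly the intended filling-in of that remark.
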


\begin{proof}
Part (a) follows from the fact that for every equationally Noetherian group $G$, there are only countably many $G$-limit groups, which implies that $\overline{[G]}$ is countable (see \cite{OH07} for details). By the Baire category theorem, this is impossible if $G$ is condensed. Part (b) follows easily from Proposition \ref{Quot} and the definition of a condensed group.
\end{proof}

Next, we discuss examples of condensed groups.

\begin{prop}\label{enH}
Every finitely generated extremely non-Hopfian group is condensed.
\end{prop}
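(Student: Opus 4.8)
The plan is to exhibit, starting from any fixed marking of $G$, a family of markings of $G$ itself that can be made $r$-similar to the fixed one for arbitrarily large $r$ while remaining distinct from it. This shows that the chosen point of $[G]$ is not isolated, hence $[G]$ is non-discrete and $G$ is condensed (so that one could also invoke Corollary \ref{dych}, though it is not needed).

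Concretely, I would fix a finite generating tuple $A=(a_1, \ldots, a_n)$ of $G$ and set $p=(G,A)\in [G]$, which under the homeomorphism $\G\to \mathcal N(F_\infty)$ of Proposition \ref{GriCha} corresponds to $N=Ker\,\e_{G,A}$. For each $r\in \NN$, let $\mathcal F_r=\{ g\in G\mid |g|_A\le r+1\}$ and use extreme non-Hopficity to choose a non-injective epimorphism $\phi_r\colon G\to G$ whose restriction to $\mathcal F_r$ is injective. Since $\phi_r$ is onto and $A$ generates $G$, the tuple $\phi_r(A)=(\phi_r(a_1), \ldots, \phi_r(a_n))$ again generates $G$, so $p_r:=(G, \phi_r(A))$ is a point of $[G]$; it corresponds to $N_r=Ker(\phi_r\circ \e_{G,A})$, and clearly $N\subseteq N_r$.

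The two things to check are distinctness and convergence. For distinctness: since $\phi_r$ is non-injective there is $g\ne 1$ in $G$ with $\phi_r(g)=1$; choosing $w\in F_\infty$ with $\e_{G,A}(w)=g$ gives $w\in N_r\setminus N$, so $N_r\ne N$ and hence $p_r\ne p$. For convergence, I would verify the key geometric point: injectivity of $\phi_r$ on the ball of radius $r+1$ forces $(G,\phi_r(A))\approx_r (G,A)$. Indeed, the label-preserving map sending the vertex of $\Gamma(G,A)$ spelled by a word $u$ to the vertex of $\Gamma(G,\phi_r(A))$ spelled by the same $u$ is well defined and bijective on balls of radius $r$, because for words $u,v$ of length $\le r$ the corresponding elements lie in $\mathcal F_r$, whence $\phi_r(\bar u)=\phi_r(\bar v)\Leftrightarrow \bar u=\bar v$; the incident edges match by construction, the padding to radius $r+1$ ensuring that boundary edges are handled. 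Thus $p_r\approx_r p$, so $p_r\to p$ in $\G$.

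Since $p_r\to p$ while $p_r\ne p$ for every $r$, every neighborhood of $p$ meets $[G]\setminus \{ p\}$, so $p$ is not isolated in $[G]$; therefore $[G]$ is non-discrete and $G$ is condensed. I expect the convergence step to be the main obstacle: one must translate the purely algebraic hypothesis ``$\phi_r$ is injective on a finite ball'' into $r$-similarity of Cayley balls via the dictionary of Proposition \ref{GriCha}, keeping careful track of the boundary edges of the ball (which is exactly why I pad the radius by one). The distinctness and the generation properties are then routine bookkeeping with the kernels $N$ and $N_r$.
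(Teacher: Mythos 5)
Your argument is essentially the paper's own proof: for each radius $r$ one invokes extreme non-Hopficity to get a non-injective self-epimorphism of $G$ that is injective on a finite ball, producing a marking $(G,\e(A))$ that is $r$-similar to but distinct from $(G,A)$, so $(G,A)$ is not isolated in $[G]$. The only difference is your (harmless, and arguably more careful) padding of the injectivity radius to $r+1$ to account for boundary edges of the ball, and your explicit translation through the kernels in $\mathcal N(F_\infty)$ to verify distinctness.
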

\begin{proof}
Let $(G,A)\in \G$, where $G$ is a finitely generated extremely non-Hopfian group and let $U$ be any neighborhood of $(G,A)$ in $\G$. By the definition of the topology on $\G$, there exists $r\in \mathbb N$ such that every $(H,B)\in \G$ satisfying $(H,B)\approx _r(G,A)$ belongs to $U$. Let $\mathcal F $ be the set of all elements of $G$ length at most $r$ with respect to $A$. By our assumption, there is a non-injective epimorphism $\e\colon G\to G$ such that the restriction of $\e$ to $\mathcal F$ is injective. It follows that $(G, \e(A))\approx _r(G,A)$. Note that $(G,\e(A))\ne (G,A)$ as $\e$ is not injective. Thus $(G,A)$ is a limit point of $[G]$, i.e., $G$ is condensed.
\end{proof}

We now derive part (a) of Example \ref{excond}.

\begin{cor}
Let $G$ be a finitely generated group such that $G\cong G\times G$. Then $G$ is condensed.
\end{cor}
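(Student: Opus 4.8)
The plan is to show that $G$ is \emph{extremely non-Hopfian} and then invoke Proposition \ref{enH}. We may assume $G$ is nontrivial (otherwise $[G]$ is a single point); note that then $G$ is in fact infinite, since a finite group satisfying $G\cong G\times G$ would have $|G|=|G|^2$, forcing $|G|=1$.

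So, given a finite subset $\mathcal F\subseteq G$, the task is to produce a non-injective epimorphism $G\to G$ whose restriction to $\mathcal F$ is injective. First I would set $D=\{f^{-1}f'\mid f,f'\in\mathcal F,\ f\ne f'\}$, a finite set of nontrivial elements, and observe that an endomorphism is injective on $\mathcal F$ exactly when its kernel misses $D$. Thus it suffices to find a normal subgroup $N\ne\{1\}$ with $N\cap D=\emptyset$ and $G/N\cong G$: the composition of the quotient map $G\to G/N$ with any isomorphism $G/N\to G$ is then the desired epimorphism (non-injective because $N\ne\{1\}$, injective on $\mathcal F$ because $N\cap D=\emptyset$).

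To construct such an $N$, I would iterate the hypothesis: from $G\cong G\times G$ one obtains $G\cong G^n$ for every $n\ge 1$. Fix $n=|D|+2$ and an internal direct product decomposition $G=G_1\times\cdots\times G_n$ with each $G_i\cong G$. Since the factors intersect pairwise trivially, a nontrivial element of $D$ can lie in at most one $G_i$; hence the set $B$ of indices $i$ for which some element of $D$ lies in $G_i$ satisfies $|B|\le|D|<n$. Choosing $i_0\notin B$ and setting $N=G_{i_0}$ gives $N\cong G\ne\{1\}$, and $N$ is normal as a direct factor; moreover $N\cap D=\emptyset$ by the choice of $i_0$, and $G/N\cong\prod_{i\ne i_0}G_i\cong G^{n-1}\cong G$ (here $n-1\ge 1$). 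This $N$ has exactly the required properties.

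The only real subtlety is the following. Composing the isomorphism $G\cong G\times G$ with a coordinate projection instantly yields a non-injective epimorphism $G\to G$, but its kernel is a full copy of $G$ and may well meet $D$, so such a naive map need not be injective on $\mathcal F$. The point of passing to $G^n$ with $n>|D|$ is precisely to create enough factors that one of them can be killed while avoiding the finitely many ``bad'' directions pinned down by $D$; this is where the argument genuinely exploits that $G$ splits off arbitrarily many copies of itself, rather than merely two. Everything else---normality of direct factors, the isomorphism $G^{n-1}\cong G$, and the equivalence between injectivity on $\mathcal F$ and disjointness of the kernel from $D$---is routine.
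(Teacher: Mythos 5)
Your proof is correct and follows essentially the same route as the paper: both reduce to showing $G$ is extremely non-Hopfian via Proposition \ref{enH}, decompose $G$ as a direct product of enough copies of itself, and use the pairwise trivial intersections of the factors to find one factor avoiding the finitely many ``bad'' elements $f^{-1}f'$ (the paper uses $k=|\mathcal F\mathcal F^{-1}|$ factors where you use $|D|+2$, a purely cosmetic difference). No gaps.
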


\begin{proof}
Let $\mathcal F$ be a non-empty finite subset of $G$ and let $k=|\mathcal{FF}^{-1}|$. By induction, we have $G=G_1 \times \cdots \times G_{k}$, where $G_i\cong G$ for all $i$. Since $G_i\cap G_j=\{1\}$ for $i\ne j$, we have $\mathcal{FF}^{-1}\cap G_i=\{ 1\} $ for some $i$. Hence, the natural homomorphism $G\to G/G_i$ is injective on $\mathcal F$. Clearly, $G/G_i\cong G$. Thus $G$ is extremely non-Hopfian. It remains to apply Proposition \ref{enH}.
\end{proof}

The second part of Example \ref{excond} is based on the notion of an iterated monodromy group introduced by Nekrashevich. We do not go into detail here and refer the interested reader to the survey \cite{Nek11} for definitions. In \cite{Nek}, Nekrashevich constructed a closed subspace $\mathcal N\subseteq \G$ homeomorphic to the Cantor set such that the isomorphism class of the iterated monodromy group $IMG(z^2+i)$ of the polynomial $z^2+i$ is dense in $\mathcal N$. In particular, $IMG(z^2+i)$ is condensed. Later Nekrashevich used this construction to prove the following.

\begin{prop}[Nekrashevich, \cite{Nek14}]\label{Nek}
There exist $2^{\aleph_0}$ pairwise non-isomorphic residually finite groups of intermediate growth having isomorphic profinite completions.
\end{prop}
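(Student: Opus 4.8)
The plan is to apply the zero-one law to the space $\S=\overline{[G]}$, where $G=IMG(z^2+i)$, and to read off the desired family from a comeager subset. First I would recall from Nekrashevich's work \cite{Nek} that $\S$ is the Cantor-set subspace $\mathcal N\subseteq\G$: since $[G]$ is dense in the closed set $\mathcal N$, we have $\S=\overline{[G]}=\mathcal N$, so $\S$ is a closed Polish subspace of $\G$ without isolated points, and moreover every marked group lying in $\S$ is residually finite and of intermediate growth. Because $[G]$ is an $\Aut$-orbit (Lemma \ref{Aut}) and $\Aut$ acts by homeomorphisms, its closure $\S$ is $\Aut$-invariant, hence isomorphism-invariant.

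Next I would invoke Theorem \ref{01}: condition (b) holds for $\S$ because $[G]$ is a dense isomorphism class, so $\S$ satisfies the zero-one law for $\L_{\omega_1,\omega}$-sentences. Corollary \ref{profin} then produces a comeager subset $\mathcal C\subseteq\S$ all of whose groups have pairwise isomorphic profinite completions. Every group in $\mathcal C$ is automatically residually finite and of intermediate growth, since $\mathcal C\subseteq\S$. It remains to count: because $\S$ is a closed Polish space without isolated points, every comeager subset—in particular $\mathcal C$—has cardinality $2^{\aleph_0}$, and since the isomorphism classes in $\G$ are countable ($\Aut$-orbits), $\mathcal C$ meets $2^{\aleph_0}$ distinct isomorphism classes. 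This yields $2^{\aleph_0}$ pairwise non-isomorphic, residually finite groups of intermediate growth with isomorphic profinite completions.

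The main obstacle is not the zero-one law machinery, which applies cleanly, but the input drawn from \cite{Nek}: one must know that the entire Cantor-set space $\S=\overline{[G]}$ consists of residually finite groups of intermediate growth. Neither property can be extracted from $G$ alone by the formal genericity criteria available here—residual finiteness is only a $\Pi_3$-condition (so Corollary \ref{Pi2}(b) does not apply), and growth type is not continuous on $\G$, so it is not preserved under the limits that occur in $\overline{[G]}$. Thus the fact that these two properties persist throughout $\mathcal N$ must come from the geometry of Nekrashevich's self-similar construction rather than from the topology of $\G$. Once that is granted, the genuine contribution of the zero-one law is precisely the comeager family with isomorphic profinite completions, and perfectness of $\S$ upgrades it to continuum-many isomorphism types.
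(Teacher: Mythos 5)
Your overall skeleton coincides with the paper's: pass to $\S=\overline{[IMG(z^2+i)]}=\mathcal N$, verify condition (b) of Theorem \ref{01} via density of the isomorphism class, apply Corollary \ref{profin} to get a comeager set with isomorphic profinite completions, and count using perfectness of $\S$ and countability of isomorphism classes. The one substantive divergence is how you guarantee that the groups in the comeager set have intermediate growth. You import from \cite{Nek} the statement that \emph{every} group in $\mathcal N$ is of intermediate growth, and you then argue that this cannot be avoided because ``growth type is not continuous on $\G$'' and is not captured by the genericity criteria. The paper does something different and weaker in its input: it only takes from \cite{Nek} that all groups in $\mathcal N$ are residually finite and that the single group $IMG(z^2+i)$ has subexponential growth. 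It then \emph{proves} that intermediate growth is generic in $\mathcal N$: the condition of subexponential growth (``for every rational $a>1$, all sufficiently large balls have fewer than $a^n$ elements'') is expressible by a $\Pi_2$-sentence in the extended language $\L^\prime$, hence defines a $G_\delta$-subset of $\G$ by Lemma \ref{claim}; since this $G_\delta$-set contains the dense class $[IMG(z^2+i)]$, it is comeager in $\mathcal N$. Polynomial growth is then excluded on all of $\mathcal N$ by Gromov's theorem (such groups are virtually nilpotent, hence finitely presented) together with Proposition \ref{Quot}. So your meta-claim that the growth condition ``cannot be extracted from the formal genericity criteria available here'' is mistaken on the subexponential side: one-sided growth bounds of this type are $G_\delta$-conditions, and containing a dense isomorphism class is exactly what upgrades $G_\delta$ to comeager.

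This matters because your version leans on a stronger fact than the paper attributes to \cite{Nek}. The paper's phrasing (``all groups in $\mathcal N$ are residually finite and \emph{the group} $IMG(z^2+i)$ has intermediate growth'') is deliberately asymmetric, and the author then supplies the genericity argument precisely to avoid claiming intermediate growth for the whole Cantor set. If the blanket statement you cite is not actually established in \cite{Nek}, your proof has a gap at that point; if it is, your route works but proves genericity of intermediate growth by fiat where the paper derives it from the topology. Either way, you should replace the appeal to ``intermediate growth throughout $\mathcal N$'' by the $\Pi_2$/$G_\delta$ argument plus the Gromov--Proposition \ref{Quot} exclusion of polynomial growth; the rest of your write-up (isomorphism-invariance of $\overline{[G]}$, the zero-one law, Corollary \ref{profin}, and the cardinality count) is correct and matches the paper.
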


The proof given in \cite{Nek14} essentially uses the algebraic structure of groups from $\mathcal N$. The key step is an explicit description of all finite quotients of such groups, which can be obtained from their actions on rooted trees. To illustrate the power of the $\L_{\omega_1, \omega}$-logic, we sketch an alternative proof of Proposition \ref{Nek}, which only uses results of \cite{Nek} and the present paper.

\begin{proof}
It is shown in \cite{Nek} that all groups in $\mathcal N =\overline{[IMG(z^2+i)]}$ are residually finite and the group $IMG(z^2+i)$ has intermediate growth. We first prove that the property of having intermediate growth is generic in $\mathcal N$.

Let $\mathcal{SE}$ denote the set of all $(G,A)\in \G$ such that $G$ has subexponential growth. Since the growth function of every group is submultiplicative, we have $(G,A)\in \mathcal{SE}$ if and only if for any rational $a>1$, there exists $N\in \mathbb N$ such that for all $n>N$, the number of elements of length at most $n$ in $G$ with respect to $A$ is less than $a^n$. It is not difficult to see that the latter condition can be expressed by an $\L_{\omega_1,\omega}^\prime$-sentence of complexity class $\Pi_2$ (see the proof of Proposition \ref{Borel} for the definition of the extended language $\L^\prime$). By Lemma \ref{claim}, $\mathcal{SE}$ is a $G_\delta $-set. Since $IMG(z^2+i)$ has subexponential growth and its isomorphism class is dense in $\mathcal N$, the set $\mathcal{SE}$ is comeager in $\mathcal N$.

By the Gromov theorem, groups of polynomial growth are virtually nilpotent and, in particular, finitely presented. Hence, they cannot occur in $\mathcal N$ by Proposition \ref{Quot}. Thus generic groups in $\mathcal N$ actually have intermediate growth. By Corollary \ref{profin}, generic groups in $\mathcal N$ also have isomorphic profinite completions and the result follows.
\end{proof}

Finally, we discuss the relation between smoothness of the isomorphism relation and the existence of condensed groups in subspaces of $\G$. Let $\{ 0,1\}^\omega$ denote the set of all binary sequences and let $E_0$ denote the equivalence relation on $\{ 0,1\}^\omega$ such that $xE_0y$ if and only if the sequences $x$ and $y$ match on all but finitely many terms. We will need the following fundamental result from the theory of Borel equivalence relations proved by  Harrington, Kechris, and Louveau, following an earlier work of Glimm and Effros.

\begin{thm}[Harrington--Kechris--Louveau, \cite{HKL}]\label{GE}
Let $E$ be a Borel equivalence relation on a Polish space $X$. Then exactly one of the following holds.
\begin{enumerate}
\item[(a)] $E$ is smooth.
\item[(b)] There is a continuous injective map $f\colon \{0,1\}^\omega \to X$ such that for all $x,y\in \{ 0,1\}^\omega$, we have $xE_0y$ if and only if $f(x)Ef(y)$.
\end{enumerate}
\end{thm}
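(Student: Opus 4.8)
The statement is the Glimm--Effros dichotomy in the sharp form of Harrington, Kechris, and Louveau, and it lies considerably deeper than the surrounding material, which offers no leverage on it; so the plan is to reprove it from scratch using \emph{effective} descriptive set theory. First I would pass to a convenient normal form: after fixing a Borel code for $E$ and relativising all effective notions to a real parameter encoding it, I may assume $X=\omega^\omega$ and that $E\subseteq X\times X$ is (lightface) $\Sigma^1_1$. I would also record the working description of the first alternative: $E$ is smooth if and only if $E\leq_B\Delta(2^\omega)$, equivalently there is a countable family of $E$-invariant Borel sets separating the $E$-classes; in the relativised effective setting this says that any two $E$-inequivalent points are separated by an $E$-invariant $\Delta^1_1$ set.

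The central tool is the \emph{Gandy--Harrington topology} $\tau$ on $X$, generated by taking all $\Sigma^1_1$ subsets as basic open sets. The key external fact I would import is that there is a $\Sigma^1_1$ set $\Omega\subseteq X$ on which $(\Omega,\tau)$ is a strong Choquet space, hence a Baire space, and on which every $\Sigma^1_1$ set has the Baire property with respect to $\tau$. This is exactly what legitimises a Baire-category construction carried out with $\Sigma^1_1$ ``conditions'', and it is the engine behind the whole argument.

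The proof then splits into two cases. \emph{Case A:} the effective separation above holds for every pair; then I would enumerate the $E$-invariant $\Delta^1_1$ sets $\{A_n\}$ and assemble them into a single Borel map $\beta\colon X\to 2^\omega$, $\beta(x)=(\chi_{A_n}(x))_n$, which by invariance and separation satisfies $x\mathrel{E}y\iff\beta(x)=\beta(y)$, yielding smoothness, i.e.\ alternative (a). \emph{Case B:} separation fails, so there are $\tau$-related but $E$-inequivalent points. Here I would build a Cantor scheme $(U_s)_{s\in 2^{<\omega}}$ of nonempty $\Sigma^1_1$ conditions with $U_{s0}\cup U_{s1}\subseteq U_s$ and $\tau$-shrinking diameters (in a fixed complete metric for the Polish topology), together with partial ``flip'' homeomorphisms $\varphi_n$ at each level that witness $E$-equivalence between the two halves. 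Letting $f(z)$ be the unique point of $\bigcap_n \overline{U_{z\restriction n}}$, the map $f\colon 2^\omega\to X$ is a continuous injection, and the flips are arranged so that $f(z)\mathrel{E}f(w)\iff z\mathrel{E_0}w$: sequences that are eventually equal are matched by a finite composition of flips, so their images are $E$-related, while sequences differing in infinitely many coordinates are kept $E$-inequivalent throughout. This produces the continuous embedding of $E_0$ required by alternative (b).

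Finally I would check exclusivity and isolate the crux. The alternatives cannot hold simultaneously because $E_0$ is not smooth: the group of eventually-zero binary sequences acts on $2^\omega$ by coordinatewise addition with every orbit dense, so by the topological zero-one law (Theorem \ref{01top}) every $E_0$-invariant Borel set is meager or comeager; as $E_0$ has uncountably many (countable, hence meager) classes, no Borel map can reduce it to equality. Hence a continuous embedding of $E_0$ into a smooth $E$ would pull a Borel reduction of $E$ to equality back to one for $E_0$, a contradiction. \textbf{The hard part} is Case B: keeping the tree of $\Sigma^1_1$ conditions nonempty while simultaneously forcing $\tau$-convergence of branches, maintaining the flip homeomorphisms that manufacture exactly the tail pattern of $E_0$, and --- most delicately --- diagonalising to forbid any accidental $E$-relation between branches that differ infinitely often. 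This is precisely where the Gandy--Harrington Baire category theorem and the reflection and separation properties of $\Sigma^1_1$ sets become indispensable.
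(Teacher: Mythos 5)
This theorem is not proved in the paper at all: it is imported verbatim from \cite{HKL} as an external black box, on the same footing as the small cancellation results of Section 4.4, and is used only once (in the proof of Proposition \ref{smooth}) to extract a condensed group from a non-smooth isomorphism relation. So there is no internal proof to compare yours against, and the first thing to say is that reproving the Harrington--Kechris--Louveau dichotomy from scratch is out of scale with what the paper needs here.

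That said, your outline does identify the correct architecture of the actual proof: relativize to a parameter coding $E$ and work with a lightface $\Sigma^1_1$ relation, dichotomize on whether every pair of $E$-inequivalent points is separated by an $E$-invariant $\Delta^1_1$ set, assemble the separating sets into a Borel reduction to equality on $2^\omega$ in the first case, and run a fusion argument in the Gandy--Harrington topology in the second. Your exclusivity argument is essentially right but compressed: from ``every $E_0$-invariant Borel set is meager or comeager'' (which does follow from Theorem \ref{01top} applied to the action of the group of finitely supported sequences) and ``every $E_0$-class is countable, hence meager,'' you still need one more step to rule out smoothness --- pull back a countable basis of the target space under a putative reduction $\beta$, note that the basic sets with comeager preimage determine a single point $p$ with $\beta^{-1}(p)$ comeager, and observe that this fiber is one meager $E_0$-class, a contradiction. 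The genuine gap is that everything difficult is either imported without proof (the strong Choquet/Baire property of the Gandy--Harrington topology on the appropriate $\Sigma^1_1$ set, the effective enumeration of invariant $\Delta^1_1$ sets needed to make $\beta$ Borel in Case A) or explicitly deferred (the whole of Case B: keeping the tree of $\Sigma^1_1$ conditions nonempty, arranging the flip homeomorphisms so that exactly the tail-equivalence pattern of $E_0$ is realized, and diagonalizing against accidental $E$-relations between branches that differ infinitely often). As written, the proposal is a faithful roadmap of the published argument rather than a proof; for the purposes of this paper, the citation to \cite{HKL} is the right move, and your energy is better spent on the application in Proposition \ref{smooth}.
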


\begin{proof}[Proof of Proposition \ref{smooth}]
We first prove the ``only if" direction. Arguing by contradiction, suppose that $(G,A)\in \S$ is condensed and the isomorphism relation on $\S$ is smooth. Then the isomorphism relation on $\overline{[G]}$ is smooth. That is, there exists a Polish space $P$ and a Borel function $f\colon \overline{[G]} \to P$ such that $f(G,A)=f(H,B)$ if and only if $G\cong H$. Every Borel map between Polish spaces is ``generically continuous" \cite[Theorem 8.38]{Kech}. This means that there is a comeager subset $\mathcal J$ of $\overline{[G]}$ such that $f\vert_{\mathcal J}$ is continuous. Consider the set
$$
\mathcal I =\bigcap_{a\in \Aut } a\mathcal J\subseteq \S.
$$
Clearly, $\mathcal I$ is isomorphism-invariant. Since $\Aut $ is countable, the set $\mathcal I$ is comeager in $\overline{[G]}$. The action of $\Aut $ on $\overline{[G]}$ is topologically transitive, hence the set of points with dense orbits is also comeager in $\overline{[G]}$ by Lemma \ref{tt}. In particular, there is $(H,B)\in \mathcal I$ such that $[H]$ is a dense subset of  $\mathcal I$. Since $f\vert_{\mathcal I}$ is continuous and constant on isomorphism classes, $f(\mathcal I)$ consists of a single point, i.e., all groups in $\mathcal I$ are isomorphic. However, this contradicts countability of isomorphism classes. Indeed, by the Baire theorem, $\mathcal I$ is uncountable being a comeager subset of the Polish space $\overline{[G]}$ without isolated points.

Assume now that the isomorphism relation on $\S$ is not smooth and let $f\colon \{0,1\}^\omega\to \S$ be the map provided by Theorem \ref{GE} applied to the isomorphism relation on $\S$. Since all equivalence classes are non-discrete in $\{0,1\}^\omega$, the image of every element of $\{0,1\}^\omega$ in $\S$ is a condensed group.
\end{proof}

\paragraph{6.2. Spaces associated to hyperbolic groups and their generalizations.}
To prove our next result, Theorem \ref{hyp}, we will need a family of hyperbolic groups defined as follows. For $p\in \NN$, let
$$
A_p=\langle a_1, \ldots, a_p \mid a_i^p=1,\; [a_i,a_j]=1, \; i,j=1, \ldots, p\rangle\cong (\ZZ/p\ZZ)^p,
$$
and
$$
B_p=\langle c,d\mid c^p=1,\; d^p=1\rangle \cong \ZZ/p\ZZ\ast \ZZ/p\ZZ.
$$
Further, let
$$H_p\cong A_p \rtimes B_p,$$
be the split extension corresponding to the action of $c$ and $d$ on $A_p$ by the cyclic permutation of the generators $a_1, \ldots, a_p$. It is easy to see that the groups $H_p$ are non-elementary hyperbolic for all $p>2$. Therefore, they are non-elementary relatively hyperbolic (with respect to $\{ 1\}$), and acylindrically hyperbolic.

We fix the generating set $X_p=\{ a_1, \ldots, a_p, c,d\}$ of $H_p$ and let
$$
U_p=\{ (H,X)\in \G \mid (H,X)\approx _{\max\{p,2\}} (H_p,X_p)\}.
$$
Let also $S_p$ denote the permutation group on $p$ symbols.

\begin{lem}\label{fip}
For every prime $p\in \NN$, there is a first-order sentence $\phi _p$ in the language of groups such that the following conditions hold.
\begin{enumerate}
\item[(a)] If a group $G$ satisfies $\phi_p$, then there is a homomorphism $G\to S_p$ with non-trivial image.
\item[(b)] For any $(H,X)\in U_p$, the group $H$ is generated by elements of order $p$ and $H\models \phi_p$.
\end{enumerate}
\end{lem}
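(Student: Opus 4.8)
The plan is to make $\phi_p$ encode, in a first-order way, a finite set $\Omega=\{x_1,\dots,x_p\}$ that the \emph{whole} group permutes by conjugation, so that this conjugation action furnishes the homomorphism to $S_p=\mathrm{Sym}(\Omega)$. The model to keep in mind is the conjugation action of $H_p$ on $\{a_1,\dots,a_p\}$: the elements $c,d$ permute the $a_i$ cyclically while $A_p$ (being abelian) fixes each $a_i$, so every element of $H_p$ permutes $\{a_1,\dots,a_p\}$ and the resulting map $H_p\to S_p$ has image the cyclic group generated by the $p$-cycle, which is non-trivial. Accordingly I would take
$$\phi_p \;=\; \exists x_1\cdots\exists x_p\,\exists y\,\Bigl[\bigwedge_{1\le i<j\le p}x_i\ne x_j \;\wedge\; yx_1y^{-1}=x_2 \;\wedge\; \forall g\,\bigwedge_{i=1}^{p}\bigvee_{j=1}^{p}\bigl(gx_ig^{-1}=x_j\bigr)\Bigr].$$
This is a genuine first-order ($\exists\forall$) sentence: the inner part is a finite Boolean combination of equations and the only unbounded quantifier is the single $\forall g$. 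The distinctness clause guarantees $|\Omega|=p$, the clause $yx_1y^{-1}=x_2$ records a witness forcing non-triviality, and the universal clause says exactly that conjugation by every element maps $\Omega$ into itself.

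For part (a), suppose $G\models\phi_p$ and fix witnesses $x_1,\dots,x_p,y$. Distinctness gives $|\Omega|=p$, and for each $g\in G$ the map $x_i\mapsto gx_ig^{-1}$ sends $\Omega$ into $\Omega$ by the universal clause; being the restriction of an injective map to a finite set, it is a bijection of $\Omega$. Hence $\theta\colon g\mapsto(\text{conjugation by }g\text{ on }\Omega)$ is a homomorphism $G\to\mathrm{Sym}(\Omega)\cong S_p$, since $(gh)x_i(gh)^{-1}=g(hx_ih^{-1})g^{-1}$. As $\theta(y)$ sends $x_1$ to $x_2\ne x_1$, the image is non-trivial. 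This direction is essentially immediate once $\phi_p$ is set up.

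The real work is part (b). Writing $X=(a_1',\dots,a_p',c',d')$ for a marked group $(H,X)\in U_p$, each generator satisfies the length-$p$ relation $g^p=1$ and is non-trivial; both facts are visible in the ball of radius $\max\{p,2\}$, since the loop traced by $a_i^p$ has all vertices within distance $\lfloor p/2\rfloor\le\max\{p,2\}$ of the identity, and non-triviality concerns words of length $1$. As $p$ is prime, each generator then has order exactly $p$, so $H$ is generated by elements of order $p$. To see $H\models\phi_p$ I would use $x_i:=a_i'$ and $y:=c'$. The clauses $a_i'\ne a_j'$ and $c'a_1'c'^{-1}=a_2'$ hold because the corresponding short configurations agree with those in $H_p$; crucially, the conjugation relations $c'a_i'c'^{-1}=a'_{i+1}$, $d'a_i'd'^{-1}=a'_{i+1}$ and the commutation relations $a_j'a_i'a_j'^{-1}=a_i'$ each trace a loop of length $4$ whose vertices stay within distance $2\le\max\{p,2\}$, so all are detected by the isomorphism of radius-$\max\{p,2\}$ balls.

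The one genuinely delicate point, which I expect to be the heart of the argument, is the universal clause $\forall g(\cdots)$ for $H$: it quantifies over all of $H$, whereas $\approx_{\max\{p,2\}}$ only controls a bounded ball. The resolution is to reduce it to the generators. From the relations detected above, each of $c',d',a_j'$ and their inverses satisfies $s\,\Omega'\,s^{-1}=\Omega'$ for $\Omega'=\{a_1',\dots,a_p'\}$ (the $c',d'$ act as cyclic shifts, the $a_j'$ trivially by commutativity), so the setwise stabilizer $P=\{g\in H:g\Omega'g^{-1}=\Omega'\}$ is a subgroup containing $X$; since $X$ generates $H$ we get $P=H$, which is precisely the universal clause. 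Thus the only subtle bookkeeping is the radius count — confirming that every relation needed both for ``generated by order-$p$ elements'' and for the generator-level verification of the universal clause is witnessed inside the ball of radius $\max\{p,2\}$ — and I would present that count explicitly.
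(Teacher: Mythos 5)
Your proof is correct and follows essentially the same route as the paper's: a first-order sentence positing $p$ elements that conjugation by every group element permutes, together with a witness to non-triviality of the resulting action, with part (b) verified on the generators via the radius-$\max\{p,2\}$ ball isomorphism and then propagated to all of $H$ because the setwise stabilizer of $\{a_1',\dots,a_p'\}$ is a subgroup containing the generating set. The only cosmetic difference is in the exact formula: the paper replaces your distinctness clause by the conditions $x_i^p=1$ and $x_ix_j=x_jx_i$ and uses $\exists\, h\; x_1h\ne hx_1$ as the non-triviality witness, which changes nothing essential.
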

\begin{proof}
We write $a\ne b$ to abbreviate $\lnot a=b$ and consider the formulas
$$
\psi_p=\left(\bigwedge_{i=1}^p x_i^p=1\right)\wedge \left(\bigwedge_{i,j=1}^p x_ix_j=x_jx_i\right)
$$
and
$$
\phi_p=\exists\, x_1\; \ldots\;\exists\, x_p\; \psi_p \wedge \left(\forall\, g\; \left(\bigwedge_{i=1}^p \bigvee_{j=1}^{p} x_ig =gx_j\right) \wedge \exists\, h\; x_1h\ne hx_1\right).
$$
It is easy to see that $G\models \phi_p$ if and only if there exist elements $x_1, \ldots, x_p\in G$ such that $K=\langle x_1, \ldots, x_p\rangle $ is a normal subgroup of $G$ isomorphic to a quotient group of $(\ZZ/p\ZZ)^p$, conjugation by any element of $G$ on $K$ permutes the generators, and at least one element $h$ acts on $\{ x_1, \ldots, x_p\}$ non-trivially. In particular, we have (a).

Further, suppose that $$(H,X)\approx _{\max\{p,2\}} (H_p,X_p),$$ where $X=\{a_1^\prime, \ldots, a_p^\prime, c^\prime, d^\prime\}$. By the definition of the relation $\approx _{\max\{p,2\}}$, the elements $x_1=a_1^\prime, \ldots, x_p=a_p^\prime$ satisfy $\psi_p$, and conjugation by every $x\in X$ permutes these elements. It follows that conjugation by every element of $H$ permutes them. Using the definition of the relation $\approx _{\max\{p,2\}}$ again, we have $a_1^\prime c^\prime\ne c^\prime a_1^\prime$. Thus $H\models \phi_p$.
\end{proof}

\begin{proof}[Proof of Theorem \ref{hyp}]
The spaces shown on diagram (\ref{diag}) have no isolated points by Lemma \ref{dense}.

Suppose that one of the sets $\overline{\mathcal{H}}$, $\overline{\mathcal{RH}}$, $\overline{\mathcal{AH}}$ contains a comeager subset covered by finitely many elementary equivalence classes $E_1, \ldots , E_k$. Then for every $p>2$, the marked group $(H_p, X_p)$ belongs to $\overline{E}_1\cup \ldots \cup \overline{E}_k$. In particular, there exist primes $p>q$ and $i\in \{ 1, \ldots, k\}$ such that $(H_p,X_p), (H_q, X_q)\in \overline{E}_i$.

\begin{figure}
  % Requires \usepackage{graphicx}
 \centering%% Creator: Inkscape inkscape 0.92.3, www.inkscape.org
%% PDF/EPS/PS + LaTeX output extension by Johan Engelen, 2010
%% Accompanies image file '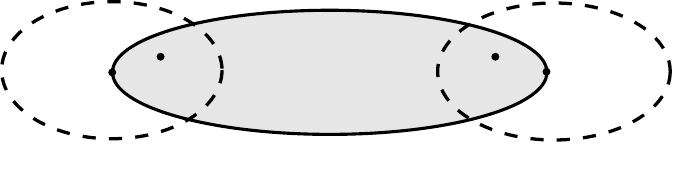' (pdf, eps, ps)
%%
%% To include the image in your LaTeX document, write
%%   \input{<filename>.pdf_tex}
%%  instead of
%%   \includegraphics{<filename>.pdf}
%% To scale the image, write
%%   \def\svgwidth{<desired width>}
%%   \input{<filename>.pdf_tex}
%%  instead of
%%   \includegraphics[width=<desired width>]{<filename>.pdf}
%%
%% Images with a different path to the parent latex file can
%% be accessed with the `import' package (which may need to be
%% installed) using
%%   \usepackage{import}
%% in the preamble, and then including the image with
%%   \import{<path to file>}{<filename>.pdf_tex}
%% Alternatively, one can specify
%%   \graphicspath{{<path to file>/}}
%% 
%% For more information, please see info/svg-inkscape on CTAN:
%%   http://tug.ctan.org/tex-archive/info/svg-inkscape
%%
\begingroup%
  \makeatletter%
  \providecommand\color[2][]{%
    \errmessage{(Inkscape) Color is used for the text in Inkscape, but the package 'color.sty' is not loaded}%
    \renewcommand\color[2][]{}%
  }%
  \providecommand\transparent[1]{%
    \errmessage{(Inkscape) Transparency is used (non-zero) for the text in Inkscape, but the package 'transparent.sty' is not loaded}%
    \renewcommand\transparent[1]{}%
  }%
  \providecommand\rotatebox[2]{#2}%
  \newcommand*\fsize{\dimexpr\f@size pt\relax}%
  \newcommand*\lineheight[1]{\fontsize{\fsize}{#1\fsize}\selectfont}%
  \ifx\svgwidth\undefined%
    \setlength{\unitlength}{322.87187858bp}%
    \ifx\svgscale\undefined%
      \relax%
    \else%
      \setlength{\unitlength}{\unitlength * \real{\svgscale}}%
    \fi%
  \else%
    \setlength{\unitlength}{\svgwidth}%
  \fi%
  \global\let\svgwidth\undefined%
  \global\let\svgscale\undefined%
  \makeatother%
  \begin{picture}(1,0.25861219)%
    \lineheight{1}%
    \setlength\tabcolsep{0pt}%
    \put(0,0){\includegraphics[width=\unitlength,page=1]{fig2.pdf}}%
    \put(0.02455712,0.15457323){\color[rgb]{0,0,0}\makebox(0,0)[lt]{\lineheight{1.25}\smash{\begin{tabular}[t]{l}$(H_p, X_p)$\end{tabular}}}}%
    \put(0.21376157,0.132105){\color[rgb]{0,0,0}\makebox(0,0)[lt]{\lineheight{1.25}\smash{\begin{tabular}[t]{l}$(H,X)$\end{tabular}}}}%
    \put(0.45955754,0.14485952){\color[rgb]{0,0,0}\makebox(0,0)[lt]{\lineheight{1.25}\smash{\begin{tabular}[t]{l}$\overline{E}_i$\end{tabular}}}}%
    \put(0.66294756,0.1331186){\color[rgb]{0,0,0}\makebox(0,0)[lt]{\lineheight{1.25}\smash{\begin{tabular}[t]{l}$(K,Y)$\end{tabular}}}}%
    \put(0.83187992,0.15111036){\color[rgb]{0,0,0}\makebox(0,0)[lt]{\lineheight{1.25}\smash{\begin{tabular}[t]{l}$(H_q, X_q)$\end{tabular}}}}%
    \put(0.15750631,0.00903895){\color[rgb]{0,0,0}\makebox(0,0)[lt]{\lineheight{1.25}\smash{\begin{tabular}[t]{l}$U_p$\end{tabular}}}}%
    \put(0.81363534,0.00498456){\color[rgb]{0,0,0}\makebox(0,0)[lt]{\lineheight{1.25}\smash{\begin{tabular}[t]{l}$U_q$\end{tabular}}}}%
  \end{picture}%
\endgroup%

    \caption{}\label{fig2}
\end{figure}

It follows that there are $(H,X)\in U_p$ and $(K,Y)\in U_q$ such that $H\equiv K$ (see Fig. \ref{fig2}). By Lemma \ref{fip} (b), we have $K\models \phi_q$. Hence $H\models \phi_q$. In particular, $H$ admits a homomorphism to $S_q$ with non-trivial image by part (a) of Lemma \ref{fip}. However, this is impossible since $H$ is generated by elements of prime order $p>q$ by part (b). This contradiction shows that the number of elementary equivalence classes in every comeager subset of each of the classes $\overline{\mathcal{H}}$, $\overline{\mathcal{RH}}$, or $\overline{\mathcal{AH}}$ is infinite.

Finally, let $\mathcal Z$ be one of the sets $ {\mathcal{H}_0}$, $ {\mathcal{H}_{tf}}$, $ {\mathcal{RH}_0}$, $ {\mathcal{RH}_{tf}}$, $ {\mathcal{AH}_0}$, or $ {\mathcal{AH}_{tf}}$. Given any non-empty open subsets $U,V\subseteq \overline{\mathcal Z}$, we can find marked groups $(G_1, A_1), (G_2, A_2)\in \mathcal Z$ such that $(G_1,A_1)\in U$ and $(G_2,A_2)\in V$. Let $r$ be a natural number such that
\begin{equation}\label{UV}
U_{G_1,A_1}(r)\subseteq U\;\;\; {\rm and}\;\;\; U_{G_2,A_2}(r)\subseteq V,
\end{equation}
where the sets  $U_{G_i,A_i}(r)$ are defined by (\ref{UGA}).
Let $${\mathcal F_i}=\{ g\in G_i\mid |g|_{A_i}\le r\}, \;\;\; i=1,2.$$ Let $Q$ and $\e_i\colon G_i\to Q$ be the group and the epimorphisms provided by Lemma \ref{Q}. Then $(Q, \e_i(A_i))\approx_r(G_i, A_i)$. By (\ref{UV}), we obtain $(Q, \e_1(A_1))\in U$ and  $(Q, \e_2(A_2))\in V$. This verifies condition (a) from Theorem \ref{01} and part (b) of Theorem \ref{hyp} follows.
\end{proof}

\paragraph{6.3. Lacunary hyperbolic groups.}
A finitely generated group is \emph{lacunary hyperbolic} if one of its asymptotic cones is an $\mathbb R$-tree. Recall that an asymptotic cone of a finitely generated group is a metric space, depending on the choice of a non-principal ultrafilter, which shows how the group looks like from ``infinitely far away". We do not go into detail here and refer the interested reader to \cite{OOS} instead.  Readers unfamiliar with asymptotic cones can accept the equivalent characterization given in Theorem \ref{LH-def} below as the definition of lacunary hyperbolicity.

We say that a group $(K,Z)\in \G$ is the \emph{limit of an epimorphic sequence}
\begin{equation}\label{lh}
(K_1, Z_1) \stackrel{\e_1}\longrightarrow (K_2,Z_2)\stackrel{\e_2}\longrightarrow \ldots,
\end{equation}
where $(K_i,Z_i)\in \G$ for all $i$, if every $\e_i\colon K_i\to K_{i+1}$ is an epimorphism, $\e_i(Z_i)=Z_{i+1}$, $$K\cong K_1/\bigcup_{i\in \NN} Ker(\e_1 \circ \cdots \circ \e_i),$$ and the natural homomorphism $K_1\to K$ maps $Z_1$ to $Z$.

\begin{thm}[{\cite[Theorem 1.1]{OOS}}]\label{LH-def}
For every group $K$, the following conditions are equivalent.
\begin{enumerate}
\item[(a)] $K$ is lacunary hyperbolic.
\item[(b)] There exists a finite generating set $Z$ of $K$ such that $(K,Z)$ is the limit of an epimorphic sequence (\ref{lh}) and  there exist positive constants $\delta_i$, $r_i$ such that for all $i\in \mathbb N$, we have:
\begin{enumerate}
\item[($i$)] the Cayley graph $\Gamma (K_i,Z_i)$ is $\delta_i$-hyperbolic;
\item[($ii$)] the epimorphism $\e_i$ is injective on the subset $\{k\in K_i \mid |k|_{Z_i}\le r_i\}$;
\item[($iii$)] $\lim_{i\to \infty} r_i/\delta_i= \infty$.
\end{enumerate}
\item[(c)] The same property as in (b) holds for every finite generating set $Z$ of $K$.
\end{enumerate}
\end{thm}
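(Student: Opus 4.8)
The plan is to route both implications through a single geometric reformulation: for a finitely generated group with word metric, an asymptotic cone $\mathrm{Con}^\omega(K;(d_n))$ is an $\mathbb R$-tree if and only if the Cayley graph is ``hyperbolic at the scales $d_n$'', in the sense that, writing $\delta(r)$ for the infimal constant making every geodesic triangle contained in the $r$-ball $\delta$-thin, one has $\lim_\omega \delta(d_n)/d_n = 0$. The forward direction of this lemma is a contradiction argument: a sequence of triangles of diameter comparable to $d_n$ whose thinness stays $\gtrsim \varepsilon d_n$ survives into the cone as a genuine fat (hence non-degenerate) triangle, contradicting $0$-hyperbolicity; the converse is the observation that once the thinness is $o(d_n)$, every rescaled triangle collapses in the ultralimit, so the cone contains no non-degenerate triangle and is therefore an $\mathbb R$-tree. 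I would prove this lemma first, since everything else is bookkeeping around it.

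Granting the lemma, the implication (b) $\Rightarrow$ (a) is the easier half. Given the epimorphic sequence with constants $\delta_i, r_i$ and $r_i/\delta_i \to \infty$, condition ($ii$) guarantees that the composite map $K_i \to K$ is injective on the ball of radius $r_i$, so the $r_i$-ball of $\Gamma(K,Z)$ is isometric to the $r_i$-ball of the $\delta_i$-hyperbolic graph $\Gamma(K_i,Z_i)$; in particular every triangle contained in that ball is $\delta_i$-thin. Choosing an intermediate scaling sequence $d_i$ with $\delta_i \ll d_i \ll r_i$ (e.g.\ $d_i = \sqrt{\delta_i r_i}$, so that $\delta_i/d_i \to 0$ and $d_i/r_i \to 0$), triangles of size $\sim d_i$ lie inside the $r_i$-ball and hence are $\delta_i = o(d_i)$-thin. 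By the lemma, $\mathrm{Con}^\omega(K;(d_i))$ is an $\mathbb R$-tree, so $K$ is lacunary hyperbolic.

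For (a) $\Rightarrow$ (b) I would start from a scaling sequence $(d_n)$ with $\lim_\omega \delta(d_n)/d_n = 0$, pass to an honest subsequence along which $\delta(d_{n_i})/d_{n_i} \to 0$, and build the groups $K_i$ as \emph{truncated presentations}: set $K_i = \langle Z \mid \mathcal R_i\rangle$, where $\mathcal R_i$ consists of all relations of $K$ of length at most $\ell_i$ for a suitable $\ell_i$ comparable to $d_{n_i}$. Adding relations as $i$ grows yields epimorphisms $K_i \twoheadrightarrow K_{i+1}$ with direct limit $K$, which is exactly the shape of (\ref{lh}). The two non-formal points are: (1) each $K_i$ is hyperbolic with constant $\delta_i$ of the same order as $\delta(d_{n_i})$, and (2) $\e_i$ is injective on a ball of radius $r_i$ with $r_i/\delta_i \to \infty$. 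Point (1) is a \emph{local-to-global} (Gromov/Cartan--Hadamard) argument: since the $\ell_i$-ball of $\Gamma(K,Z)$ is $\sim\!\delta(d_{n_i})$-thin and the Cayley complex of $K_i$ agrees with that of $K$ up to radius $\sim \ell_i$, a linear isoperimetric inequality holds up to a scale that is a large multiple of $\delta(d_{n_i})$, and Gromov's principle promotes this to global hyperbolicity of $K_i$ with a comparable constant. Point (2) is length bookkeeping: choosing $r_i$ a fixed fraction of $\ell_i$ (say $r_i = \ell_i/3$) ensures that no word of length $\le r_i$ becomes trivial when further relations of length $> \ell_i$ are imposed, so $K_i \to K$ is injective on the $r_i$-ball; since $\ell_i \sim d_{n_i} \gg \delta(d_{n_i}) \sim \delta_i$, we get $r_i/\delta_i \to \infty$ as required.

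Finally, (c) $\Rightarrow$ (b) is trivial, and the content of (a)/(b) $\Rightarrow$ (c) is that the construction is insensitive to the generating set. Here I would use that a change of finite generating set distorts the word metric by a bi-Lipschitz factor, so the hyperbolicity-at-scale function $\delta(\cdot)$ is replaced by one of the same order after rescaling; thus $\delta(d_n)/d_n \to 0$ for one generating set forces the analogous statement (for a suitably rescaled sequence) for every other $Z$, and the truncated-presentation construction can be run verbatim. I expect the main obstacle to be Point (1): carefully extracting a genuinely hyperbolic group from purely local thinness data, and making the local-to-global constants uniform enough that $\delta_i$ tracks $\delta(d_{n_i})$ while $r_i$ stays large compared to it. This delicate simultaneous control of $\delta_i$ and $r_i$ — rather than either direction of the cone lemma — is where the real work lies.
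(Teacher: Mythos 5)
You should first be aware that the paper does not actually prove Theorem \ref{LH-def}: the equivalence of (a) and (b) is imported wholesale from \cite[Theorem 1.1]{OOS}, and the only argument the paper supplies is the observation that the proof given there --- write an infinite presentation of $K$ over an arbitrary finite generating set $Z$ and truncate it --- establishes (a)$\Rightarrow$(c) rather than merely (a)$\Rightarrow$(b), after which (c)$\Rightarrow$(b) is trivial. Your proposal is therefore an attempt to reprove the cited theorem from scratch, and in outline it reconstructs the actual argument of \cite{OOS}: the criterion for an asymptotic cone to be an $\mathbb R$-tree in terms of thinness of triangles at the scales $d_n$, the truncated presentations $K_i=\langle Z\mid \mathcal R_i\rangle$ with $\mathcal R_i$ consisting of \emph{all} relations of $K$ of length at most $\ell_i$ (this is what makes your point (2) correct: a word of length $\le r_i$ that dies in the limit is itself a relation of length $\le \ell_i$, hence already dies in $K_i$ --- it is not mere length bookkeeping, and you should say so), and a local-to-global principle. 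So the route is right; it is simply not the paper's route, because the paper outsources the whole thing.

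Measured against \cite{OOS} rather than against the paper, the write-up has genuine gaps. First, the crux --- that local thinness on the $\ell_i$-ball forces $K_i$ to be globally hyperbolic with constant comparable to $\delta(d_{n_i})$, with constants uniform enough that $r_i/\delta_i\to\infty$ survives --- is only named, not proved; this quantitative Cartan--Hadamard step is essentially the entire technical content of the relevant section of \cite{OOS}, and a proof that defers its hardest step is a sketch. Second, in (b)$\Rightarrow$(a) you assert that condition ($ii$) makes the composite $K_i\to K$ injective on the $r_i$-ball; it does not by itself, since a later $\e_j$ with small $r_j$ can kill a short element. You must first argue that $r_i\to\infty$ and pass to a subsequence along which the $r_i$ are non-decreasing, using that $\e_i$ maps the $r$-ball of $K_i$ into the $r$-ball of $K_{i+1}$. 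Third, your cone lemma needs thinness control on balls of radius $Cd_n$ for every $C$, not just radius $d_n$, since the cone is unbounded; $\delta(d_n)/d_n\to 0$ alone does not control $\delta(2d_n)/d_n$. Finally, your derivation of (c) from bi-Lipschitz invariance of $\delta(\cdot)$ needs the version of the Morse lemma whose tracking constant is linear in $\delta$ for fixed quasi-isometry constants --- true but not free --- whereas the paper's remark gets (c) at no cost by simply rerunning the truncation construction with the new generating set.
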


\begin{proof}[On the proof]
Formally speaking, \cite[Theorem 1.1]{OOS} only claims that (a) is equivalent to (b). However, the proof of Theorem 1.1 in \cite{OOS} goes as follows. Starting with any generating set $Z$ of $K$ one writes an (infinite) presentation of $K$ with respect to $Z$ and then obtains the desired sequence of groups $K_i$ by truncating this presentation. Thus the proof actually shows that (a) implies (c). Since (c) obviously implies (b), we get the equivalence of all three conditions.
\end{proof}

\begin{cor}\label{hlh}
The spaces $\mathcal {LH}$, $\mathcal {LH}_{0}$, and $\mathcal {LH}_{tf}$ are subsets of $\overline{\mathcal H}$, $\overline{\mathcal H}_{0}$, and $\overline{\mathcal H}_{tf}$, respectively.
\end{cor}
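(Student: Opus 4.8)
The plan is to realize every lacunary hyperbolic marked group as an explicit limit of non-elementary hyperbolic ones. Given $(K,Z)\in\mathcal{LH}$, I would apply Theorem \ref{LH-def}(c) to the generating set $Z$, obtaining an epimorphic sequence $(K_1,Z_1)\xrightarrow{\e_1}(K_2,Z_2)\xrightarrow{\e_2}\cdots$ with limit $(K,Z)$ for which each $\Gamma(K_i,Z_i)$ is $\delta_i$-hyperbolic, each $\e_i$ is injective on the ball of radius $r_i$, and $r_i/\delta_i\to\infty$. First I would record that $(K_i,Z_i)\to(K,Z)$ in $\G$: setting $N_i=\ker(\e_1\circ\cdots\circ\e_i)\lhd K_1$ gives an increasing chain with $K\cong K_1/\bigcup_i N_i$ and $K_{i+1}\cong K_1/N_i$ (carrying the image of $Z_1$ to $Z_{i+1}$), so this convergence is exactly Example \ref{exconv}(b). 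Each $K_i$ is hyperbolic by condition (i), and it is non-elementary because the composite epimorphism $K_i\to K$ is onto, $K$ is not virtually cyclic by hypothesis, and the class of virtually cyclic groups is closed under quotients (the image of a finite-index cyclic subgroup is cyclic of finite index). Hence $(K_i,Z_i)\in\mathcal H$ and $(K,Z)\in\overline{\mathcal H}$, which proves $\mathcal{LH}\subseteq\overline{\mathcal H}$.

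For $\mathcal{LH}_0$ and $\mathcal{LH}_{tf}$ the same sequence should work, but now I must verify that a \emph{tail} of the $K_i$ lies in $\mathcal H_0$, respectively $\mathcal H_{tf}$; this is the part not supplied by Theorem \ref{LH-def} and is the main obstacle. The key input is the standard fact (see \cite[Ch.\ III.$\Gamma$]{BH}) that in a $\delta$-hyperbolic group every finite subgroup is conjugate into the ball of radius $C\delta$ around the identity, for a universal constant $C$. Since $r_i/\delta_i\to\infty$, I would fix $j_0$ with $r_j\ge C\delta_j$ for all $j\ge j_0$. I then claim that for $j\ge j_0$ the map $\e_j$ is injective on every finite subgroup $N\le K_j$: choosing $x$ with $xNx^{-1}\subseteq B(C\delta_j)\subseteq B(r_j)$, injectivity of $\e_j$ on $B(r_j)$ yields $xNx^{-1}\cap\ker\e_j=1$, and because $\ker\e_j$ is normal this forces $N\cap\ker\e_j=1$. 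Consequently $\e_j(N)\cong N$ is a finite subgroup of $K_{j+1}$, which is moreover normal whenever $N$ is (a surjective image of a normal subgroup).

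Iterating this claim along the sequence, any nontrivial finite subgroup of $K_i$ with $i\ge j_0$ maps isomorphically onto a nontrivial finite subgroup of $K$, normal in $K$ if it was normal in $K_i$. I would finish by contraposition. If $K$ is torsion-free, then for $i\ge j_0$ no $K_i$ can contain a nontrivial torsion element $g$, for $\langle g\rangle$ would be a nontrivial finite subgroup surviving in $K$; thus the tail $(K_i,Z_i)_{i\ge j_0}$ lies in $\mathcal H_{tf}$ and still converges to $(K,Z)$, giving $\mathcal{LH}_{tf}\subseteq\overline{\mathcal H}_{tf}$. If instead $K$ has no nontrivial finite normal subgroup, I would apply the same reasoning to the finite radical $K(K_i)$: were it nontrivial for some $i\ge j_0$, its image would be a nontrivial finite normal subgroup of $K$, so $K(K_i)=1$ and the tail lies in $\mathcal H_0$, giving $\mathcal{LH}_0\subseteq\overline{\mathcal H}_0$. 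The only point requiring care is that the injectivity claim must hold at every stage $j\ge j_0$ at once, which is precisely what the single threshold $j_0$ furnished by $r_i/\delta_i\to\infty$ guarantees.
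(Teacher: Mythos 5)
Your proposal is correct and follows essentially the same route as the paper: realize $(K,Z)$ as the limit of the epimorphic sequence of non-elementary hyperbolic groups furnished by Theorem \ref{LH-def}, then use the Bridson--Haefliger bound (finite-order elements, or finite subgroups, of a $\delta$-hyperbolic group are conjugate into a ball of radius $4\delta+2$) together with condition ($iii$) to conclude that a tail of the sequence inherits torsion-freeness, respectively the absence of non-trivial finite normal subgroups. The only cosmetic differences are that the paper argues with individual finite-order elements and normal closures $\ll k\rr$ where you conjugate entire finite subgroups and invoke the finite radical; both versions work.
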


\begin{proof}
For $\mathcal {LH}$, this is an immediate corollary of Theorem \ref{LH-def}.

Further, suppose that $(K,Z)\in \mathcal {LH}_{0}$ or $(K,Z)\in \mathcal {LH}_{tf}$ and let (\ref{lh}) be the sequence provided by Theorem \ref{LH-def}. Let $k\in K_i$ be an element of finite order for some $i\in \NN$. Then $k$ is conjugate to an element $h\in K_i$ of length $|h|_{Z_i} \le 4\delta_i+2$ (see the last line of the proof of \cite[Theorem 3.2, Chapter III.$\Gamma$]{BH}). Therefore, condition ($iii$) implies that for all but finitely many indices $i$, every non-trivial finite order element of $K_i$ is mapped to a non-trivial finite order element of $K$ under the natural homomorphism $K_i\to K$. Passing to a subsequence, we can assume that this property holds for all $i$.

In particular, if $K$ is torsion-free, then all groups $K_i$ are torsion-free. This settles the case of $\mathcal {LH}_{tf}$. Furthermore, if $\ll k\rr$ is finite and non-trivial in $K_i$, then the natural image of $k$ in $K$ generates a non-trivial finite normal subgroup. Thus we have $(K_i,Z_i)\in \mathcal {LH}_0$ whenever $(K,Z)\in \mathcal {LH}_0$.
\end{proof}

Not every limit of hyperbolic groups in $\G$ is lacunary hyperbolic. We discuss one particular example, which will be used later.

\begin{ex}\label{W}
Fix some $k\in \NN$ and consider the group
$$
W(k)=\langle u, v \mid R_1, R_2, \ldots\rangle ,
$$
where $R_j$ are defined by (\ref{wj}) for all $j\in\NN$. Similarly to the presentations of groups $W_n(k)$ considered in Example \ref{Wnk}, this infinite presentation satisfies the $C^\prime(1/6)$ small cancellation condition for $k\ge 30$. By \cite[Proposition 3.12]{OOS}, $W(k)$ is not lacunary hyperbolic. The idea behind this fact is that $W(k)$ has relations ``at all scales". This implies the existence of non-trivial simple loops in all asymptotic cones and, therefore, none of the asymptotic cones of $W(k)$ is an $\mathbb R$-tree. On the other hand, $(W(k), \{ u,v\})$ is the limit of the sequence of hyperbolic groups $(W_n(k), \{ u,v\})$ considered in Example \ref{Wnk}. Thus $W(k)$ belongs to $\overline{\mathcal{H}}$. Moreover, it is well-known that a group given by a presentation satisfying $C^\prime(1/6)$ is torsion-free provided none of the relations is a proper power (see, for example, \cite{Hue}). It follows that $W_n(k)\in \mathcal H_0$ and $W(k)\in \overline{\mathcal{H}}_0$.
\end{ex}

The next lemma will only be used to show that $\mathcal{LH}$, $\mathcal{LH}_0$, and $\mathcal{LH}_{tf}$ are homeomorphic to the subspace of irrational numbers in $\mathbb R$. Readers not interested in this fact can skip the lemma and other results below and go directly to the proof of Theorem \ref{LHgen}.

\begin{lem}\label{HLH}
The sets $\overline{\mathcal H}\setminus \mathcal{LH}$,  $\overline{\mathcal H}_0\setminus \mathcal{LH}_0$, and $\overline{\mathcal H}_{tf}\setminus \mathcal{LH}_{tf}$ are dense in $\overline{\mathcal H}$, $\overline{\mathcal H}_{0}$, and $\overline{\mathcal H}_{tf}$, respectively.
\end{lem}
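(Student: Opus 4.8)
The three statements are proved in parallel. Write $\mathcal Z$ for one of $\mathcal H$, $\mathcal H_0$, $\mathcal H_{tf}$ and $\mathcal Y$ for the corresponding one of $\mathcal{LH}$, $\mathcal{LH}_0$, $\mathcal{LH}_{tf}$, so that the goal is to show $\overline{\mathcal Z}\setminus\mathcal Y$ is dense in $\overline{\mathcal Z}$. Since $\mathcal Z$ is dense in $\overline{\mathcal Z}$ by definition of the closure, any non-empty open $U\subseteq\overline{\mathcal Z}$ contains a basic neighbourhood $U_{G,A}(r)$ with $(G,A)\in\mathcal Z$; that is, $G$ is genuinely non-elementary hyperbolic (and torsion-free, resp. without non-trivial finite normal subgroups, in the last two cases). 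It therefore suffices to produce, for each such $(G,A)$ and each $r$, a group $H$ with $(H,\bar A)\approx_r(G,A)$, with $(H,\bar A)\in\overline{\mathcal Z}$, and with $H$ \emph{not} lacunary hyperbolic; such an $H$ then lies in $\overline{\mathcal Z}\setminus\mathcal Y$, since non-lacunarity rules out membership in each of $\mathcal{LH}$, $\mathcal{LH}_0$, $\mathcal{LH}_{tf}$.

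For the construction I would fix two elements $u,v\in G$ generating a free subgroup of rank $2$ (such a pair exists in every non-elementary hyperbolic group); after replacing $u,v$ by suitable high powers of independent loxodromic elements, one may assume that the cyclically reduced words
$$
R_j=u^jvu^jv^2\cdots u^jv^{k}\qquad(j\ge 1),
$$
read as words in $A$, are geodesic and satisfy a small cancellation condition over $G$ in the sense of \cite{Ols,Osi10,Hull}, with $|R_j|_A\asymp j$. Choosing $N$ so large that $|R_j|_A>2r$ for all $j\ge N$, set
$$
G_m=G/\ll R_N,\ldots,R_{N+m}\rr,\qquad H=G/\ll R_j:j\ge N\rr .
$$
These relators are the relators of Example \ref{W}, transplanted from a free group into $G$; the point of keeping them as words in $A$ is precisely to preserve the marking.

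Each $G_m$ is again non-elementary hyperbolic by small cancellation theory over hyperbolic groups \cite{Ols}; since the $R_j$ are not proper powers, $G_m$ is torsion-free whenever $G$ is, and (arguing as in the proof of Lemma \ref{Q}, via Proposition \ref{KG} and Corollary \ref{K=1}) $G_m$ retains a hyperbolically embedded free subgroup, so $K(G_m)=\{1\}$ whenever $K(G)=\{1\}$. Hence $(G_m,\bar A)\in\mathcal Z$ for every $m$. The normal subgroups $\ll R_N,\ldots,R_{N+m}\rr$ increase to $\ll R_j:j\ge N\rr$, so $(G_m,\bar A)\to(H,\bar A)$ by Example \ref{exconv}(b), whence $(H,\bar A)\in\overline{\mathcal Z}$. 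Finally, every added relator has length $>2r$, so by the Greendlinger-type lemma for small cancellation over $G$ the map $G\to H$ is injective on the ball of radius $r$ and induces an isomorphism of labelled $r$-balls; thus $(H,\bar A)\approx_r(G,A)$ and $(H,\bar A)\in U_{G,A}(r)\subseteq U$.

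It remains to see that $H$ is not lacunary hyperbolic, which is the main obstacle. The key is that the relators $R_j$ stay \emph{essential} in $H$: by the Greendlinger-type lemma over $G$, each $R_j$ is a shortest relation at its scale and is not a consequence of the relations of smaller length, while $|R_{j+1}|_{\bar A}/|R_j|_{\bar A}\to 1$, so $H$ carries essential relations at all scales. This is exactly the mechanism of Example \ref{W}: as in \cite[Proposition 3.12]{OOS}, the family $\{R_j\}$ yields non-contractible simple loops in every asymptotic cone of $H$, so no asymptotic cone of $H$ is an $\mathbb R$-tree and $H$ is not lacunary hyperbolic. The only delicate points are the verification that the relators remain essential at every scale in the direct limit and the translation of this into the asymptotic-cone statement; both follow the template already set up for $W(k)$ in Example \ref{W} and \cite{OOS}.
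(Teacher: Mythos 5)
Your overall strategy -- approximate $(G,A)$ by a quotient $H$ that contains ``relations at all scales'' and is a limit of hyperbolic marked groups -- is the right shape, and your handling of the approximation part (long relators, injectivity on the $r$-ball, $G_m\to H$ via Example \ref{exconv}(b)) is essentially sound modulo the usual care needed to run small cancellation over $G$ for an infinite family of relators of unbounded length. But there is a genuine gap at the one step that actually matters: the proof that $H$ is \emph{not} lacunary hyperbolic. You assert that the family $\{R_j\}$ ``yields non-contractible simple loops in every asymptotic cone of $H$'' by ``the template of Example \ref{W} and \cite[Proposition 3.12]{OOS}'', but that template is an argument about $C^\prime(1/6)$ presentations over a \emph{free} group, where classical small cancellation gives precise control of which loops are essential at which scale. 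Once the relators are imposed over an arbitrary non-elementary hyperbolic group $G$ via the small cancellation machinery of \cite{Ols,Osi10,Hull}, the image of $\langle u,v\rangle$ in $H=G/\ll R_j: j\ge N\rr$ is only some quotient of $W(k)$ (the normal closure is taken in $G$, not in $\langle u,v\rangle$), you have no control over its distortion, and the asymptotic-cone statement does not transfer; making it precise is not a ``delicate point'' but the entire content of the lemma.

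The paper's proof is engineered precisely to sidestep this. It realizes $F_2=\langle x,y\rangle$ as a \emph{hyperbolically embedded} subgroup of $G$ (Proposition \ref{KG}) and performs a Dehn filling along the kernel $N$ of $F_2\to W(k)$ (Theorem \ref{CEP}). Part (a) of that theorem guarantees that $W(k)$ itself embeds in $Q=G/\ll N\rr$, part (b) makes $Q$ hyperbolic relative to $W(k)$, hence $W(k)$ is undistorted in $Q$ by Proposition \ref{undist}; then \cite[Theorem 3.8]{OOS} (undistorted finitely generated subgroups of lacunary hyperbolic groups are lacunary hyperbolic) together with the already-established fact that $W(k)$ is not lacunary hyperbolic forces $Q\notin\mathcal{LH}$. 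If you want to salvage your version, you should replace your direct asymptotic-cone argument by this transfer mechanism, which in turn essentially forces you to work with a hyperbolically embedded copy of $F_2$ and Dehn filling rather than with generic small cancellation words over $G$.
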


\begin{proof}
Recall that we write $G\in \S$ for a group $G$ and a subset $\S\subseteq \G$ if $(G,A)\in \S$ for some finite generating set $A$.

We deal with the case of torsion-free groups first. It suffices to show that for every $G \in \mathcal H_{tf}$ and every finite $\mathcal K\subseteq G$, there is $Q\in \overline{\mathcal H}_{tf}\setminus \mathcal{LH}_{tf}$ and an epimorphism $G\to Q$ injective on $\mathcal K$. Let $(G,X)\in \mathcal H_{tf}$. Since $G$ is non-elementary hyperbolic, the generating set $X$ satisfies condition (AH$_2$) from Theorem \ref{aa} (this uses the obvious observation that the action of $G$ on $\Gamma(G,X)$ is acylindrical whenever $|X|<\infty $). By Proposition \ref{KG}, there exists a subgroup $H\cong F_2=\langle x,y\rangle $ such that $H\h (G,X)$; note that we have $K(G)=\{1\}$ as $G$ is torsion-free. Let $\mathcal F=\mathcal F(\mathcal K)\subseteq G\setminus\{ 1\}$ be the finite set provided by Theorem \ref{CEP} applied to the group $G$ and the subgroup $H$. Let $W(k)$ be the group given in Example \ref{W}. By the Greedlinger lemma (see Lemma \ref{Green}), there exists $k\ge 30$ such that the kernel of the map $F_2\to W(k)$ sending $x\mapsto u$ and $y\mapsto v$ does not intersect $\mathcal F$. From now on, we fix any $k$ satisfying this condition.

Let $N$ (respectively, $N_n$) denote the kernel of the map $F_2\to W(k)$ (respectively, $F_2\to W_n(k)$) sending $x\mapsto u$ and $y\mapsto v$. We have $N_n\subseteq N$ and hence $N_n\cap \mathcal F=\emptyset$. Thus the conclusion of Theorem \ref{CEP} holds for all normal subgroups $N_n$ as well as for $N$. We denote by $Y$ (respectively, $Y_n$) the natural image of the set $X$ in $Q=G/\ll N\rr$ (respectively, $Q_n=G/\ll N_n\rr$). Since $N=\bigcup N_n$, we have $$\lim_{n\to \infty}(Q_n, Y_n)= (Q, Y)$$ in $G$ (see Example \ref{exconv} (b)). As we already mentioned in Example \ref{W}, $W(k)$ is torsion-free and every $W_n(k)$ is torsion-free hyperbolic. Since all generating sets under consideration are finite, parts (a) and (b) of Theorem \ref{CEP} imply that $Q$ (respectively, $Q_n$) is hyperbolic relative to $W(k)$ (respectively, $W_n(k)$); in addition, these groups are torsion-free by part (d). By Proposition \ref{rhh}, every $Q_n$ is hyperbolic. It follows that $(Q,Y)\in  \overline{\mathcal H}_{tf}$.

Since $Q$ is hyperbolic relative to $W(k)$, $W(k)$ is undistorted in $Q$ with respect to a finite generating set of $Q$ by Proposition \ref{undist}. By \cite[Theorem 3.8]{OOS}, every finitely generated subgroup of a lacunary hyperbolic group undistorted with respect to a finite generating set of the group is lacunary hyperbolic itself. Since $W(k)$ is not lacunary hyperbolic, we conclude that $Q$ is not lacunary hyperbolic. This finishes the proof of the lemma for $\overline{\mathcal H}_{tf}\setminus \mathcal{LH}_{tf}$.

For $\overline{\mathcal H}_0\setminus \mathcal{LH}_0$, we start with any $(G,X)\in \mathcal H_{0}$ and argue as above. We still have $K(G)=\{ 1\}$ by the definition of $\mathcal H_0$ in this case. Corollary \ref{K=1} implies that  $Q_n\in \mathcal H_{0}$ and hence $Q\in \overline{\mathcal H}_{0}\setminus \mathcal{LH}_{0}$.

Finally, we note that for $\overline{\mathcal H}\setminus \mathcal{LH}$, exactly the same proof works except that we may have $K(G)\ne \{ 1\}$ and $H\cong F_2\times K(G)\not\cong F_2$ in this case. Instead of maps $F_2\to W(k)$ and $F_2\to W_n(k)$, we consider maps $H\to W(k)\times K(G)$ and $H\to W_n(k)\times K(G)$, which send $x$ and $y$ to $u$ and $v$, respectively, and acts identically on $K(G)$. It is well-known (and straightforward to see from the definitions in terms of asymptotic cones) that the property of being lacunary hyperbolic is stable under taking finite extensions and subgroups of finite index. Thus $W(k)\times K(G)$ is not lacunary hyperbolic. On the other hand,  $W_n(k)\times K(G)$ is hyperbolic for every $n$ (see Example \ref{exhyp} (d)), and we can complete the proof as above.
\end{proof}

Recall that the \emph{Baire set}\footnote{This set is often called ``the Baire space"; we prefer to use the term ``Baire set" (in analogy with the Cantor set) so that it is not confused with the notion of a Baire space discussed in Section 3.1.}  is  the set $\NN^\NN$ equipped with the product metric. It is well-known that $\NN^\NN$ is homeomorphic to the set of irrational numbers with the topology induced from $\mathbb R$. Moreover, we have the following.

\begin{thm}[Alexandrov-Uryson, {\cite[Theorem 7.7]{Kech}}]
Every non-empty Polish zero-dimensional space, where all compact subsets have empty interiors, is homeomorphic to the Baire set.
\end{thm}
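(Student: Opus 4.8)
Let $X$ be a non-empty Polish zero-dimensional space in which every compact subset has empty interior, and fix a complete compatible metric $d$ on $X$ (this exists since $X$ is Polish). The plan is to build a \emph{clopen scheme} indexed by finite sequences of natural numbers and to read off a homeomorphism $\NN^{\NN}\to X$ from it; this is the classical Lusin/Cantor scheme method, with the hypotheses entering precisely where one needs each node of the scheme to split into \emph{infinitely} many pieces.

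The key observation to extract from the hypotheses is that no non-empty clopen subset $C\subseteq X$ is compact: such a $C$ equals its own interior, so compactness would contradict the assumption that compact sets have empty interior. Since $C$ is closed in the complete space $X$, it is itself complete, so its non-compactness forces $C$ to fail to be totally bounded; hence there exist $\delta>0$ and an infinite $\delta$-separated subset of $C$. I will use this to prove the crucial splitting lemma: every non-empty clopen $C$ and every $\varepsilon>0$ admit a partition of $C$ into countably infinitely many non-empty clopen sets, each of diameter $<\varepsilon$. Indeed, using zero-dimensionality together with second countability (Lindel\"of), one covers $C$ by a countable clopen cover into sets of diameter $<\min\{\varepsilon,\delta/2\}$ and disjointifies it into a countable clopen partition; each piece, having diameter $<\delta/2$, meets the $\delta$-separated set in at most one point, so infinitely many pieces are non-empty. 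This lemma is the technical heart of the argument and is where all three hypotheses are genuinely used.

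Next, I would recurse on the length of $s\in\NN^{<\NN}$ to produce non-empty clopen sets $U_s$ with $U_\emptyset=X$ such that, for every $s$, the family $\{U_{s^\frown n}\}_{n\in\NN}$ is a partition of $U_s$ into infinitely many non-empty clopen pieces of diameter $\le 2^{-|s|}$ (apply the splitting lemma at each node; since every $U_s$ is again non-empty clopen, hence non-compact, the recursion never stalls). Define $f\colon\NN^{\NN}\to X$ by letting $f(x)$ be the unique point of $\bigcap_{n}U_{x\restriction n}$: this is well-defined because the $U_{x\restriction n}$ form a decreasing sequence of non-empty closed sets whose diameters tend to $0$ in the complete metric $d$.

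Finally, I would verify that $f$ is a homeomorphism. Surjectivity and injectivity follow from the fact that the sets at each level partition $X$: every $p\in X$ lies in a unique coherent chain $U_{x\restriction n}$, while distinct $x,y$ first differ at some coordinate, placing $f(x)$ and $f(y)$ in disjoint clopen sets. Continuity is immediate from the shrinking diameters. For the inverse, one checks that $f(N_s)=U_s$ for the basic clopen cylinder $N_s=\{x\mid s\subset x\}$, so $f$ carries basic open sets to open sets and is therefore an open map, hence a homeomorphism. The only real obstacle is the splitting lemma; once it is in hand, the construction of the scheme and the verification that $f$ is a homeomorphism are routine.
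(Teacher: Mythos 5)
The paper does not prove this statement: it is quoted verbatim from Kechris \cite[Theorem 7.7]{Kech} and used as a black box (via Corollary \ref{CB}), so there is no in-paper proof to compare against. Your argument is correct and is the standard one — the key splitting lemma (a non-empty clopen set is complete and non-compact, hence not totally bounded, hence admits a countably infinite partition into small non-empty clopen pieces) together with the resulting Lusin scheme is essentially the proof given in Kechris, and all the steps (disjointification, Cantor intersection for well-definedness, $f(N_s)=U_s$ for openness) check out.
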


We will need an obvious corollary of this theorem.

\begin{cor}\label{CB}
Let $\mathcal P$ be a non-empty zero-dimensional Polish space and let $\mathcal B$ be a $G_\delta$-subset of $\mathcal P$. Suppose that both $\mathcal B$ and $\mathcal P\setminus \mathcal B$ are dense in $\mathcal P$. Then $\mathcal B$ is homeomorphic to the Baire set.
\end{cor}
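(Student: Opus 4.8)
The plan is to verify that $\mathcal B$ satisfies the hypotheses of the Alexandrov--Uryson theorem stated above, namely that $\mathcal B$ is a non-empty zero-dimensional Polish space in which every compact subset has empty interior. Three of these four properties are immediate. First, $\mathcal B$ is non-empty since it is dense in the non-empty space $\mathcal P$. Second, $\mathcal B$ is Polish because a $G_\delta$-subspace of a Polish space is Polish (Proposition \ref{ps-cl}). Third, $\mathcal B$ is zero-dimensional because the traces on $\mathcal B$ of a clopen basis of $\mathcal P$ form a clopen basis of $\mathcal B$.

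The only substantial point is to show that every compact subset $K\subseteq \mathcal B$ has empty interior in $\mathcal B$, and here I would argue by contradiction. Suppose some compact $K\subseteq \mathcal B$ has non-empty interior in $\mathcal B$, so that there is a non-empty relatively open set $U\subseteq K$; write $U=U_0\cap \mathcal B$ with $U_0$ open in $\mathcal P$ and necessarily non-empty. Since compactness is intrinsic and $\mathcal P$ is Hausdorff, $K$ is closed in $\mathcal P$. Because $\mathcal B$ is dense in $\mathcal P$ and $U_0$ is open, the set $U=U_0\cap\mathcal B$ is dense in $U_0$, whence $U_0\subseteq \overline{U}^{\mathcal P}\subseteq K\subseteq\mathcal B$ (the middle inclusion holds because $U\subseteq K$ and $K$ is closed in $\mathcal P$). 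This contradicts the density of $\mathcal P\setminus\mathcal B$, which forces the non-empty open set $U_0$ to meet $\mathcal P\setminus\mathcal B$.

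With all four hypotheses verified, the Alexandrov--Uryson theorem yields directly that $\mathcal B$ is homeomorphic to the Baire set. The crux of the argument is the empty-interior condition, and the key mechanism there is the interplay between the two density hypotheses: density of $\mathcal B$ propagates $U_0$ into $\overline{U}^{\mathcal P}$, while density of $\mathcal P\setminus\mathcal B$ supplies the contradiction. I expect this step to be the main obstacle, precisely because it is the only place where both density assumptions are genuinely used; a careless proof that invokes only one of them would fail here.
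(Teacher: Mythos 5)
Your proof is correct and follows essentially the same route as the paper's: both reduce to the Alexandrov--Uryson criterion and establish the empty-interior condition by contradiction, using density of $\mathcal B$ to force a non-empty open $U_0\subseteq \mathcal P$ into the closed set $K\subseteq\mathcal B$ and density of $\mathcal P\setminus\mathcal B$ to contradict this. The only cosmetic difference is that you phrase the final step via closures while the paper uses a convergent sequence.
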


\begin{proof}
The set $\mathcal B$ is Polish by Proposition \ref{ps-cl} and zero-dimensional being a subspace of a zero-dimensional space. According to the Alexandrov-Uryson theorem, it suffices to show that every compact subset of $\mathcal B$ has empty interior. Arguing by contradiction, suppose that a compact subset $K\subseteq \mathcal B$ contains a non-empty subset $U$ that is open in $\mathcal B$. Let $W$ be an open subset of $\mathcal P$ such that $U=\mathcal B \cap W$. Since $\mathcal P\setminus \mathcal B$ is dense in $\mathcal P$, there is $x\in W\setminus \mathcal B$. Since $\mathcal B$ is dense in $\mathcal P$, there is a sequence of points $x_1, x_2, \ldots \in \mathcal B \cap W\subseteq K$ converging to $x$. Since $K$ is compact, we must have $x\in K \subseteq \mathcal B$, a contradiction.
\end{proof}

We are now ready to prove our last result.

\begin{proof}[Proof of Theorem \ref{LHgen}]
We first show that the spaces $\mathcal{LH}$, $\mathcal{LH}_0$, and $\mathcal{LH}_{tf}$ are dense $G_\delta$-subsets of $\overline{\mathcal H}$, $\overline{\mathcal H}_{0}$, and $\overline{\mathcal H}_{tf}$, respectively. To this end, for each $(G,X)\in \mathcal H$, we denote by $\delta_{G,X}$ the smallest natural number such that the Cayley graph $\Gamma (G,X)$ is $\delta_{G,X}$-hyperbolic. For every $j\in \mathbb N$ and every $(G,X)\in \mathcal H$, we let
$$
U_j(G,X)=\{(H,Y)\in \overline{\mathcal H}\mid (H,Y)\approx_{j(4 \delta_{G,X}+6)}(G,X)\} .
$$
Let
$$
\mathcal{U}=\bigcap_{j\in \mathbb N}\;\bigcup_{(G,X)\in \mathcal H} U_j(G,X).
$$

We  claim that every $(K,Z)\in \mathcal{U}$ is lacunary hyperbolic. Indeed, for each $j\in \mathbb N$, we can find a marked group $(K^\prime _j, Z^\prime_j)\in \mathcal H$ such that $(K,Z)\in U_j(K^\prime _j,Z^\prime _j)$. We define a sequence $(j(i))_{i \in \NN}$ inductively by letting
$$
j(1)=1,\;\;\; j(i+1)=j(i)(4\delta_{K'_{j(i)},Z'_{j(i)}}+6),
$$
and

$$(K_i, Z_i)=(K^\prime_{j(i)}, Z^\prime_{j(i)})\in \mathcal H$$ for $i\ge 1$.
Let also
$$r_i=j(i+1).$$

By our construction, $(K,Z)\approx_{r_i}(K_i,Z_i)$ for all $i$. Note that
$$
r_{i+1} =j(i+2)= j(i+1)(4\delta_{K_{i+1},Z_{i+1}}+6)> j(i+1)=r_i
$$
for all $i \in \NN$. Thus we also have $(K,Z)\approx_{r_i}(K_{i+1}, Z_{i+1})$. It follows that $(K_i, Z_i)\approx_{r_i}(K_{i+1}, Z_{i+1})$.

Being a hyperbolic group, $K_i$ admits a finite presentation $K_i=\langle Z_i\mid \mathcal R_i\rangle$, where every relator $R\in \mathcal R_i$ has length at most $4\delta_{K_i,Z_i}+6$ (see, for example, the proof of \cite[Chapter III.$\Gamma$.2.2]{BH}). Our definition of $r_i$ implies that $r_i \ge 4\delta_{K_{i}, Z_{i}}+6$. Hence,  the map $Z_i\to Z_{i+1}$ extends to an epimorphism $\e_i\colon K_i\to K_{i+1}$ which is injective on the set of all elements $k\in K_i$ of length $|k_i|_{Z_i}\le r_i$. Similarly, there exists an epimorphism $K_i\to K$ that maps $Z_i$ to $Z$ and is injective of the set of elements of length at most $r_i$. Clearly, we have $\lim_{i\to \infty}r_i/\delta_{K_{i}, Z_{i}}= \infty $. In particular, $\lim_{i\to \infty} r_i=\infty$. This easily implies that $K$ is the direct limit of the sequence (\ref{lh}) satisfying conditions (i)--(iii). Thus $\mathcal{U}$ consists of lacunary hyperbolic groups.

Conversely, let $(K,Z)\in \mathcal{LH}$. Let (\ref{lh}) be an epimorphic sequence provided by part (c) of Theorem \ref{LH-def}.  Since all maps $\e_i$ in (\ref{lh}) are onto, $K_i$ is non-elementary for all $i$. Further, conditions ($i$)--($iii$) imply that for every $j\in \NN$, there is $n=n(j)$ such that $(K,Z)\in U_j(K_n,Z_n)$. It follows that $(K,Z)\in \mathcal U$. Thus we have $\mathcal U=\mathcal{LH}$.

By definition, $\mathcal{U}$ is a $G_\delta$-set. Note that $\mathcal U$ is also dense in $\overline{\mathcal H}$ since $\mathcal H \subset \mathcal{U}$ by construction. Similarly, $\mathcal U\cap \mathcal{H}_0$ and $\mathcal U\cap \mathcal{H}_{tf}$ are dense in $\overline{\mathcal H}_{0}$ and $\overline{\mathcal H}_{tf}$, respectively. Thus $\mathcal{LH}$, $\mathcal{LH}_0$, and $\mathcal{LH}_{tf}$ are dense $G_\delta$-subsets of $\overline{\mathcal H}$, $\overline{\mathcal H}_{0}$, and $\overline{\mathcal H}_{tf}$, respectively. Clearly, $\overline{\mathcal H}$, $\overline{\mathcal H}_{0}$, and $\overline{\mathcal H}_{tf}$ are zero-dimensional Polish spaces being closed subsets of the zero-dimensional Polish space $\G$. Combining this with Lemma \ref{HLH} and Corollary \ref{CB}, we obtain that $\mathcal{LH}$, $\mathcal{LH}_0$, and $\mathcal{LH}_{tf}$ are homeomorphic to the Baire set and, in particular, to the subspace of irrational numbers in $\mathbb R$.
\end{proof}

\section{Remarks and open questions}
We conclude this paper with a brief discussion of open questions and directions for further research.
Our first question is motivated by the observation the all condensed groups discussed in this paper are not finitely presented.

\begin{prob}
Does there exist a finitely presented condensed group?
\end{prob}

Note that by Propositions \ref{cond} and \ref{enH}, this is equivalent to asking whether there exists a finitely presented extremely non-Hopfian group.

Here is yet another basic open question about condensed groups. For the study of the quasi-isometry relation in the relevant context, see \cite{MOW}.

\begin{prob}
Is the property of being condensed invariant under elementary equivalence of finitely generated groups? Is it geometric (i.e., invariant under quasi-isometry)?
\end{prob}

A group $G$ is called \emph{quasi-finitely-axiomatizable} (\emph{QFA}, for short) if there exists a first-order sentence $\sigma$ such that all finitely generated models of $\sigma$ are isomorphic to $G$ \cite{Nie}. Corollary \ref{LH-cor} shows that generic torsion-free lacunary hyperbolic groups are very far from being QFA.

\begin{prob}
Does there exist a non-cyclic torsion-free QFA lacunary hyperbolic group?
\end{prob}

Recall that the generic first-order theory $Th^{gen}(\S)$ of a subspace $\S\subseteq \G$ is the set of all first-order sentences in the language of groups whose set of models is comeager in $\S$. We denote by $Th^{gen}_\forall (\S)$ the subset of all universal sentences in $Th^{gen}(\S)$. Using methods of \cite{Osi09}, it is not difficult to show that $Th_{\forall}^{gen}(\overline{\mathcal H})$, $Th_{\forall}^{gen}(\overline{\mathcal H}_0)$, and $Th_{\forall}^{gen}(\overline{\mathcal H}_{tf})$ are undecidable. Hence, so are $Th^{gen}(\overline{\mathcal H})$, $Th^{gen}(\overline{\mathcal H}_0)$, and $Th^{gen}(\overline{\mathcal H}_{tf})$. The same result trivially holds for the closures of all other classes shown on diagram (\ref{diag}); indeed, the universal theory of each of them coincides with the universal theory of all groups or the universal theory of all torsion-free groups, both of which are known to be undecidable.

The general intuition suggests that every ``sufficiently complicated" subspace $\S\subseteq \G$ must have undecidable generic first-order theory. Is it possible to convert this into a precise statement? For example, we can ask the following.

\begin{prob}
Let $\S$ be a perfect subset of $\G$. Is it possible that $Th^{gen}(\S)$ is decidable?
\end{prob}

In Theorem \ref{hyp} (a), we showed that every comeager subset of $\mathcal{H}$ has infinitely many elementary equivalence classes but we do not know whether this number is countable or not.

\begin{prob}\label{card}
Does there exist a comeager subset of $\overline{\mathcal {H}}$ with countably many elementary equivalence classes?
\end{prob}

Note that if the answer to this question is negative, then the number of elementary equivalence classes in every comeager subset of $\overline{\mathcal {H}}$ is $2^{\aleph_0}$. Indeed, suppose that some comeager subset $\S\subseteq\overline{\mathcal {H}}$ has less than $2^{\aleph_0}$ equivalence classes. Let $\S_0$ be a dense $G_\delta $-subset of $\S$. Then $\S_0$ is Borel and, by the Silver dichotomy \cite{Sil}, the number of elementary equivalence classes in $\S_0$ must be countable. Clearly, $\S_0$ is also comeager in $\overline{\mathcal {H}}$.

Out next question is motivated by the highly non-constructive nature of the proof of Corollary \ref{LH-cor}.

\begin{prob}\label{lhex}
Find an explicit example of two non-isomorphic, elementarily equivalent, torsion-free, lacunary hyperbolic groups with property FA of Serre.
\end{prob}

Property FA is added here to rule out the ``trivial" examples coming from the hyperbolic world (e.g., $F_2$ and $F_3$). By Theorem \ref{Sela}, the required examples cannot be found among hyperbolic groups. On the other hand, we know that generic torsion-free lacunary hyperbolic groups satisfy all the requirements. Yet, we cannot provide any concrete examples!

Theorem \ref{LHgen} implies that the set $\mathcal {LH}$ is a Borel subset of $\G$. The set $\mathcal H$ is also Borel since it is countable. However, we do not know the answer to the following.

\begin{prob}
Are $\mathcal {RH}$ and $\mathcal {AH}$ Borel subsets of $\G$?
\end{prob}

Many of our results can be proved in the more general settings of $\Omega$-algebras, where $\Omega$ is any countable signature. We briefly outline possible generalizations and discuss related questions. For all unexplained notation and background from universal algebra, we refer to \cite{Cohn}.

An $\Omega$-algebra $A$ is \emph{generated} by a subset $Y=\{y_1, \ldots, y_n\}$ if every element $a\in A$ can be expressed as $a=t(y_1, \ldots, y_k)$ for some $\Omega$-term $t$. An \emph{$n$-generated marked $\Omega$-algebra} is a pair $(A,Y)$, where $A$ is an $\Omega$-algebra and $Y$ is an ordered generating set of $A$ of cardinality $n$. As in the case of groups, we consider such pairs up to the following equivalence relation: pairs $(A, (y_1, \ldots, y_n))$ and $(B, (z_1, \ldots , z_n))$ are identified if the map $y_i\mapsto z_i$, $i=1,\ldots, n$, extends to an isomorphism $A\to B$. We denote the set of all $n$-generated marked $\Omega $-algebras by $\mathcal A_n(\Omega)$.

A \emph{congruence} on an $\Omega$-algebra $A$ is a subset of $A\times A$ that is simultaneously an equivalence relation and an $\Omega $-subalgebra of $A\times A$. Let $\Con (A)$ denote the set of all congruences of $A$.  We also denote by $T_n=T(x_1, \ldots, x_n)$ the free $\Omega$-algebra generated by $\{ x_1, \ldots, x_n\}$. That is, $T_n$ is the set of all $\Omega$-terms in variables $x_1, \ldots, x_n$ equipped with the obvious operations.  It is well-known (and easy to prove) that elements of ${\rm Con}(A)$ are in one-to-one correspondence with homomorphisms of $\Omega$-algebras with domain $A$, see \cite[Section II.3]{Cohn}. This allows us to construct a bijection $\mathcal A_n(\Omega)\to\Con(T_n)$ in the same way as in Proposition \ref{GriCha}. The product topology on $2^{A\times A}$ induces the structure of a Hausdorff, zero-dimensional, separable, compact space on $\Con(T_n)$ and, via the above bijection, on $\mathcal A_n(\Omega)$.

Unlike in the case of groups, there is no ``canonical" embedding of $\mathcal A_n(\Omega)$ in $\mathcal A_{n+1}(\Omega)$. However, we can still form the \emph{space of finitely generated marked $\Omega$-algebras}  by taking the topological disjoint union
$$
\mathcal A (\Omega) = \bigsqcup_{n\in \NN} \mathcal A_n(\Omega).
$$
It is easy to see that $\mathcal A (\Omega)$ is a Polish space.

Let $\L$ be the first-order language with signature $\Omega$. The isomorphism class of a given $\Omega $-algebra, the set of models of a given $\L_{\omega_1,\omega}$-sentence, and satisfiability of the zero-one law for $\L_{\omega_1, \omega}$-sentences in a subspace $\S\subseteq \mathcal A(\Omega)$ are defined in the same way as in the case of groups. Once this terminology is established, one can show that the direct analogue of Theorem \ref{01} for $\Omega$-algebras holds.

\begin{thm}\label{01A}
For any isomorphism-invariant closed subspace $\S\subseteq \mathcal A (\Omega)$, the following conditions are equivalent.
\begin{enumerate}
\item[(a)] For any non-empty open sets $U$, $V$ in $\S$, there is a finitely generated $\Omega$-algebra $A$ such that $[A]\cap U\ne \emptyset$ and $[A]\cap V\ne \emptyset$.
\item[(b)] There exists a finitely generated $\Omega$-algebra $A$ such that ${[A]}$ is dense in $\S$.
\item[(c)] $\S $ satisfies the zero-one law for $\mathcal L_{\omega_1, \omega}$-sentences.
\end{enumerate}
\end{thm}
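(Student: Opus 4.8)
The plan is to reprove Theorem \ref{01full} (the strengthening of Theorem \ref{01} with five equivalent conditions) verbatim in the category of $\Omega$-algebras, since Theorem \ref{01A} is exactly its conditions (a)--(c). The logical skeleton of that proof uses only four inputs: the description of the isomorphism relation as the orbit relation of a countable group of homeomorphisms with condition (a) equivalent to topological transitivity (Lemma \ref{Aut}), the correspondence between isomorphism-invariant Borel sets and $\L_{\omega_1,\omega}$-definable sets (Proposition \ref{Borel}), the folklore transitivity lemma (Lemma \ref{tt}), and the topological zero-one law (Theorem \ref{01top}). The last two are stated for arbitrary group actions on Baire spaces and will be reused once the first is in place; and since a closed subspace $\S\subseteq\mathcal A(\Omega)$ is again Polish, hence Baire, only the first two inputs require genuine re-proof.

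The replacement for Lemma \ref{Aut} is where the disjoint-union structure of $\mathcal A(\Omega)$ --- and the absence of a single ambient group such as $\Aut$ --- makes itself felt, and I expect this to be the main obstacle. First I would note that the isomorphism relation is \emph{countable} (a finitely generated $\Omega$-algebra is countable, hence has only countably many finite generating tuples). I would then realize it as the orbit relation of a countable inverse semigroup $\Gamma$ of homeomorphisms between clopen subsets. For each pair of term-tuples $\tau=(t_1,\dots,t_m)\in T_n^{\,m}$ and $\sigma=(s_1,\dots,s_n)\in T_m^{\,n}$, let $D_{\tau,\sigma}\subseteq\mathcal A_n(\Omega)$ be the set of $(A,(a_i))$ satisfying the finitely many term identities $s_i(t_1(a),\dots,t_m(a))=a_i$; this is clopen, and on it the rule $(A,(a_i))\mapsto(A,(t_1(a),\dots,t_m(a)))$ is a homeomorphism onto a clopen subset of $\mathcal A_m(\Omega)$, with inverse given by $\sigma$. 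The family of these maps is closed under composition and inversion. Exactly as in the Nielsen-transformation argument of Lemma \ref{Aut}, any change of finite generating tuple of a fixed algebra is realized by a single such map, so the $\Gamma$-orbits are precisely the isomorphism classes and condition (a) is exactly topological transitivity of $\Gamma$. Since the members of $\Gamma$ are open maps between clopen sets, $\Gamma$-saturations of open sets are open; the proofs of Lemma \ref{tt} and Theorem \ref{01top} then go through with ``$gU$'' replaced by ``$\gamma(U\cap D)$'' over $\gamma\in\Gamma$ with clopen domain $D$. Re-proving these two statements in this inverse-semigroup setting is the one delicate point.

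The replacement for Proposition \ref{Borel} is more routine. Working in a fixed component $\mathcal A_n(\Omega)\cong\Con(T_n)$, I would adjoin constants $c_1,\dots,c_n$ to $\L$ interpreted as the generators and let $T$ be the set of $\Omega$-terms in these constants; since the $c_i$ generate, the identities (\ref{eq1}) and (\ref{eq2}) relating $\forall x\,\phi(x)$ and $\exists x\,\phi(x)$ to the countable conjunction and disjunction of $\phi(t)$ over $t\in T$ hold verbatim. The induction on complexity class in Lemma \ref{claim} then reproduces, component by component, the equivalence between Borel class $\mathbf\Sigma^0_\alpha$/$\mathbf\Pi^0_\alpha$ and definability by a $\Sigma_\alpha$/$\Pi_\alpha$ sentence, using the generation formula $\forall g\,\bigvee_{t\in T}(g=t)$ in place of (\ref{alpha}). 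Assembling across the clopen components $\mathcal A_n(\Omega)$ (a set is Borel of a given class iff each of its component traces is, and the models of a sentence split as a disjoint union over $n$) yields the full analog of Proposition \ref{Borel} for $\mathcal A(\Omega)$.

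With these two ingredients established, the implications of Theorem \ref{01full} transcribe without change: (a)$\Rightarrow$(c) from the $\Gamma$-version of Lemma \ref{tt}, (a)$\Rightarrow$(d) from the $\Gamma$-version of Theorem \ref{01top} applied to the invariant Borel set $\ModS(\sigma)$, the formal chain (d)$\Rightarrow$(e)$\Rightarrow$(a) exactly as before (the last step using that the $\Gamma$-saturation of an open set is open and invariant, hence definable by the analog of Proposition \ref{Borel}), and (b)$\Leftrightarrow$(a) trivially. Restricting to conditions (a)--(c) gives Theorem \ref{01A}. The main obstacle, as indicated, is the first ingredient: setting up the inverse semigroup $\Gamma$ correctly and verifying that the transitivity machinery of Lemma \ref{tt} and Theorem \ref{01top} survives the passage from a single group action to a countable family of open partial homeomorphisms, a step forced on us by the lack of canonical embeddings $\mathcal A_n(\Omega)\hookrightarrow\mathcal A_{n+1}(\Omega)$.
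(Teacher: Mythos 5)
Your proposal is correct and takes essentially the same route as the paper, which only sketches the argument: it observes that for isomorphic marked algebras $(A,Y)$ and $(B,Z)$ there exist isomorphism-preserving homeomorphisms between neighborhoods $N(A,Y)\to N(B,Z)$ and that such local homeomorphisms suffice to run the proof of Theorem \ref{01full}. Your countable inverse semigroup of term-defined partial homeomorphisms between clopen sets is precisely a concrete realization of those local maps, and the rest (the component-wise adaptation of Proposition \ref{Borel} and the transcription of Lemma \ref{tt} and Theorem \ref{01top} to partial homeomorphisms) matches what the paper intends.
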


The proof is essentially the same as in the case of groups. The only difference is that, in general, there is no natural group of homeomorphisms of $\mathcal A (\Omega)$ whose orbits are precisely the isomorphism classes. However, it is not difficult to show that for any marked $\Omega$-algebras $(A,Y), (B,Z)\in \mathcal A (\Omega)$ such that $A\cong B$, there are open neighborhoods $N(A,Y)$ and $N(B,Z)$ of $(A,Y)$ and $(B,Z)$, respectively, and an isomorphism-preserving homeomorphism $N(A,Y)\to N(B,Z)$ taking $(A,Y)$ to $(B,Z)$. The existence of such local homeomorphisms is, in fact, sufficient to carry out the proof.

One can also generalize some other results of our paper in these settings. However, it is not quite clear whether this general theory is worth developing as it lacks non-trivial natural examples, like the spaces from Theorem \ref{hyp}.

\begin{prob}\label{pn}
Find interesting examples of countable signatures $\Omega$ and perfect subsets $\S\in \mathcal A(\Omega)$ containing dense isomorphism classes.
\end{prob}

A natural approach to Question \ref{pn} is based on condensed $\Omega$-algebras. As in the case of groups, we call an $\Omega$-algebra $U$ \emph{condensed} if its isomorphism class in $\mathcal A(\Omega)$ has no isolated points. It is then easy to show that for any finitely generated condensed group $G$ and any finitely generated ring $R$, the group ring $R[G]$ is finitely generated and condensed as a structure in the language of unital rings, whose signature is $\{ 0, 1, +, \cdot\}$. One can play around this idea a bit but all the examples we get this way are not very convincing as they essentially come from group theory.
Similarly to the case of groups, one can show that if a finitely generated $\Omega$-algebra $U$ is isomorphic to $U\times U$, then $U$ is condensed. We do not know the answer to the following.

\begin{prob}
Does there exist a non-zero, finitely generated, associative ring $R$ such that $R\cong R\times R$?
\end{prob}

\addcontentsline{toc}{section}{References}

\noindent \textbf{Denis Osin: } Department of Mathematics, Vanderbilt University, Nashville 37240, U.S.A.\\
E-mail: \emph{denis.v.osin@vanderbilt.edu}

\end{document}